\newcommand{\norm}[1]{\lVert#1\rVert}
\newcommand{\la}{\lambda}
\newcommand{\bC}{{\mathbb C}}
\newcommand{\cH}{{\mathcal H}}
\newcommand{\bH}{{\bf H}}
\newcommand{\cN}{{\mathcal N}}
\newcommand{\cM}{{\mathcal M}}
\newcommand{\cK}{{\mathcal K}}
\newcommand{\cW}{{\mathcal W}}
\newcommand{\cJ}{{\mathcal J}}
\newcommand{\mD}{{\mathscr D}}
\newtheorem{theorem}{Theorem}
\newtheorem{lemma}{Lemma}
\newtheorem{proposition}{Proposition}
\newtheorem{corollary}{Corollary}
\newtheorem{definition}{Definition}
\newtheorem{examples}{Examples}
\theoremstyle{remark}
\newtheorem{remark}{Remark}
\author{Alexandru Aleman, Alex Bergman}               
\title{Invariant Subspaces for Generalized Differentiation and Volterra Operators}
\date{\today}
\begin{document}
	\maketitle
    \begin{abstract}
        In this paper we provide a far-reaching generalization of the existent results about invariant subspaces of the differentiation operator $D=\frac{\partial}{\partial t}$ on $C^\infty(0,1)$ and the Volterra operator $Vf(t)=\int_0^tf(s)ds$, on $L^2(0,1)$.  We use an abstract approach to study invariant subspaces of pairs $D,V$ with $DV=I$,  where $V$ is compact and quasi-nilpotent and $D$ is unbounded densely defined and closed on the same Hilbert space. Our results cover many differential operators, like Schr\"odinger operators and a large class of other canonical systems, as well as the so-called compact self-adjoint operators with removable spectrum recently studied in \cite{MR3292735}. Our methods are based on a model for such pairs which involves de Branges spaces of entire functions and plays a crucial role in the development. However, a number of difficulties arise from the fact that our abstract operators  do not necessarily identify with the usual operators on such spaces, but  with rank one perturbations of those, which, in terms of invariant subspaces creates a number of challenging problems.
    \end{abstract}
    
	\begin{section}{Introduction}
		The motivation for the study initiated in this paper goes back to the invariant subspaces of the Volterra operator $V$ defined on the Hilbert space $L^{2}(0,1)$ by $$Vf(t)=\int_0^tf(s)ds,$$	
		as well as the invariant subspaces of its unbounded left inverse 
		\begin{equation*}
			Df(x) = f'(x),
		\end{equation*}
	when restricted to $C^\infty(0,1)$.
		In 1938 Gelfand \cite{gelfand} posed the problem of determining the invariant subspaces of $V$. This problem has a rich history and has been studied by many authors.
		The, by now well known result,  that each such subspace consists of functions which vanish a.e. on a fixed interval $(0,a)$,  has been originally proved by Agmon, \cite{MR0031110} and independently by Brodskii \cite{MR0094717} and Donoghue \cite{MR0092124}. Later, among others, Sarason \cite{MR0192355} gave a different proof based on properties of the backward shift operator on the Hardy space on the upper half plane.
		
		The study of invariant subspaces of $D$ on $C^\infty(0,1)$ has been initiated much later by the first author and B. Korenblum   (\cite{MR2419491}, 2008). The structure of these subspaces is much more complicated. They can be classified according to the spectrum of the restriction of $D$ which can be either void, or a discrete set in $\bC$, or the entire complex plane. The first category, also called \emph{residual subspaces} is fully described in 	\cite{MR2419491} and are of primary interest for this work, It turns out, that a residual subspace consists of functions in $C^\infty(0,1)$ that vanish on a compact subinterval. This interval may reduce to a point in which case we have that the functions  together with all their derivatives vanish at that point. This is clearly related to the invariant subspaces of right inverses of $D$, that is, Volterra operators of the type $$V_af(t)=\int_a^tf(s)ds, \quad a\in [0,1].$$		The case of differentiation invariant subspaces where the spectrum of the restriction is discrete is related to the appropriate version of \emph{spectral synthesis} in this context. This subtle problem was investigated by the first author, A. Baranov and Y. Belov in 
	\cite{MR3318654} and completely  solved subsequently by A. Baranov and Y.  Belov in \cite{MR3925104}. The structure of the remaining type of $D-$invariant subspaces is not understood.
		
		The ideas related to invariant subspaces of the Volterra operator have been extended to other classes of integral operators. The closest to the subject of this paper is the direction suggested by C. Remling in his book \cite{MR3890099}, namely to describe the invariant subspaces of Green's operators associated to canonical systems, for example,  Schr\"odinger operators. We shall provide a description of these objects in Section \ref{sec:new_deBranges_results}. The reason for our particular interest is the interplay between  invariant subspaces  of these operators and the so-called residual subspaces of their left inverses which, in this case, are differential operators. 
		
		It is the aim of this paper to study such situations in a very general context. More precisely, we are interested in invariant subspaces for pairs of Hilbert space operators $D,V$ satisfying $DV=I$ together with certain abstract conditions  which mimic the situations described above and cover a large class of examples of interest.
	The formal definition is as follows.
	
		\begin{definition}
			Let $H$ be a separable Hilbert space. A pair of operators $D$ and $V$ will be called \emph{admissible} if
			\begin{enumerate}[(i)]
				\item $D : \mathscr{D}(D) \subset H \to H$ is a densely defined closed  operator,
				\item  $D$ has a one-dimensional kernel
				\item $D$ has a self-adjoint restriction with compact resolvent,
				\item $V: H \to H$ is compact and quasi-nilpotent, that is, $\sigma(V) = \left\{ 0 \right\}$,
				\item $\text{Ran}(V) \subset \mathscr{D}(D)$ and $DV = I$.
			\end{enumerate}
		\end{definition}
	 The assumption that $V$ is compact is actually redundant, since it follows from the compactness of the resolvent of the self-adjoint operator in (iii). 
	 
	 In this setup the operator $V$ plays the role of the Volterra operator and $D$ of the derivative multiplied by appropriate constants to ensure (iii). Other typical examples of pairs of admissible operators are given by second order differential operators with one fixed boundary condition and their Green's operators (solution operators). We give a large class of examples including Schrödinger operators and canonical systems of differential equations in Section \ref{sec:examples}. Finally, these conditions cover also the class of self-adjoint operators with removable spectrum  introduced  in \cite{MR3292735}.  These are compact self-adjoint operators having a rank one perturbation which is quasi-nilpotent. We shall discuss in detail these examples in  Section \ref{sec:examples}.
				
		The key tool in our investigation is a functional model which is obtained in a similar way  to the approaches in spectral theory. More precisely, it turns out that $D$ possesses a family $\{\phi_\lambda:~\lambda\in \bC\}$ of eigenvectors such that the function $\lambda\mapsto \phi_\lambda$ is an entire $H-$valued function, so that the \emph{generalized Fourier transform}  $\cW$ defined on $H$ by $$\cW x(\la)=\langle x,\phi_{\overline{\la}}\rangle,\quad x\in H$$
		induces a unitary map onto a Hilbert space of entire functions. Our first main theorem identifies this space as a shifted de Branges space, that is,  
		\begin{equation}\label{intro_model}\cW H=e^{-i\alpha z}\cH(E),\end{equation}
		where $\alpha\in \mathbb{R}$,  $E$ is a regular Hermite-Biehler function  and $\cH(E)$ is the corresponding de Branges space of entire functions (see Section \ref{sec:prelim} for details). Here and throughout by the usual abuse of notation we write $z$ for both a point in $\bC$ as well as for the identity function on $\bC$.
		While in spectral theory of certain differential operators such models yield a ''diagonalization'', in our case the situation is more complicated; $\cW D\cW^{-1}$ is just an extension of $M_z^*$,  where $M_z$ is multiplication by the independent variable and $\cW V^{*}\cW^{-1}$ is a rank one  perturbation of the backward shift $f\mapsto\frac{f-f(0)}{z}$.

        De Branges spaces have attracted renewed attention is recent years and have been an instrumental tool in resolving many open problems in operator theory, complex analysis, harmonic analysis, and spectral theory, see e.g. \cite{MR4635834, MR4307215, MR2739783, MR1388849, MR1915824, MR2609247, MR4639943, MR2628803, MR3425390, MR1923965, MR2910798, MR3561432} for some examples of this.
	
		The fact that the lattice of invariant subspaces of the Volterra operator is totally ordered by inclusion has been extended in many ways. Such operators are called \emph{unicellular}. Kalisch \cite{MR0091443} and Kisilevskiĭ \cite{MR0221323} proved unicellularity of a large class of integral operators whose kernels do not vanish on the diagonal.  Another interesting class consists of operators with positive imaginary part.
		Kisilevskiĭ \cite{MR0221323} proved that a  cyclic irreducible operator $V$ with  positive and nuclear imaginary part is unicellular. A very good account for results of this type is the book of Gohberg and Krein \cite{MR0264447}. We should point out that none of the conditions above apply to the operators $V$ considered in this paper.
		
	Clearly, $D$ has many right inverses which are all compact since by assumption (ii) above they differ by a rank one operator. In Section \ref{sec:new_deBranges_results} we shall show that among these, the quasi-nilpotent ones form a one-parameter family $V_\beta, ~|\beta+\alpha|\le \tau(E)$. They generalize the Volterra-type operators $$f\mapsto \int_\beta^tf(s)ds,\quad \beta\in [0,1],$$
	which are unicellular if and only if $\beta=0$, or $\beta=1$.
	It turns out that a similar result holds in the general context as well. This is due to the fact that the invariant subspaces of these operators can be described in terms of the model space. More precisely, their ''Fourier transforms'' have the form 	 
	$$\cW M=(e^{i\gamma z}\cH(F))^\perp,\quad \gamma\in \mathbb{R},$$
where the de Branges space	 $\cH(F)$ is a closed subspace  of $\cH(E)$.  These subspaces always form a chain. Moreover for fixed $\beta$ as above, $\gamma$ must satisfy
	\begin{equation}\label{v_beta_star_inv_intro}
		|\gamma+\alpha|\le \tau(E)-\tau(F),\quad 	|\beta-\gamma|\le \tau(F).
	\end{equation} 
	This leads to a general unicellularity theorem.		For an entire function $f$ we denote by $\tau(f)$ its exponential type.
	
		\begin{theorem}\label{thm:1'}
			Let $D$ and $V$ be an admissible pair. Then $D$ has at most two (and at least one) unicellular quasi-nilpotent right-inverses given by
			\begin{equation*}
			V_{+}f = Vf + \langle f, \mathcal{W}^{-1}s_{+} \rangle \phi_{0},
			\end{equation*}
			and
			\begin{equation*}
			V_{-}f = Vf + \langle f, \mathcal{W}^{-1}s_{-} \rangle\phi_{0},
			\end{equation*}
			where
			\begin{equation*}
			s_{\pm} = \frac{1-e^{i\beta_{\pm}z}}{z} \in \mathcal{W}H,
			\end{equation*}
			and
			\begin{equation*}
			\beta_{+} = \tau(E) - \alpha \text{, and } \beta_{-} = - (\tau(E) + \alpha).
			\end{equation*}
			These operators coincide if and only if $\tau(E) = \alpha = 0$.
		\end{theorem}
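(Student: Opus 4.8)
The plan is to reduce the statement to the functional model and then to a one-dimensional geometric computation with intervals. First I would record the elementary structure of the right inverses: since $\ker D=\bC\phi_0$ is one-dimensional, any two right inverses of $D$ differ by an operator with range in $\ker D$, so every right inverse has the form $f\mapsto Vf+\langle f,\psi\rangle\phi_0$. By the results announced in Section~\ref{sec:new_deBranges_results}, the quasi-nilpotent right inverses form the closed, nonempty family $\{V_\beta:\ |\beta+\alpha|\le\tau(E)\}$, with the endpoints attained; in the model the rank-one correction of $V_\beta$ is $\cW^{-1}s_\beta$ with $s_\beta=(1-e^{i\beta z})/z$. The existence of at least one endpoint already secures the ``at least one'' clause.

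Next I would transport the invariant-subspace lattice of $V_\beta$ to the model via \eqref{v_beta_star_inv_intro}: the $V_\beta$-invariant subspaces are precisely $\cW^{-1}(e^{i\gamma z}\cH(F))^\perp$, where $\cH(F)$ runs over the de Branges subspaces of $\cH(E)$ and the pair $(\gamma,F)$ satisfies $|\gamma+\alpha|\le\tau(E)-\tau(F)$ and $|\beta-\gamma|\le\tau(F)$. Unicellularity means this family is totally ordered by inclusion. Since the de Branges subspaces $\cH(F)\subseteq\cH(E)$ already form a chain, the only way total ordering can fail is through the shift: for a fixed $F$, two admissible values $\gamma_1\ne\gamma_2$ yield incomparable spaces $e^{i\gamma_1 z}\cH(F)$ and $e^{i\gamma_2 z}\cH(F)$, because a nontrivial multiplier $e^{i(\gamma_1-\gamma_2)z}$ does not preserve a de Branges space. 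Hence $V_\beta$ is unicellular exactly when, for every admissible type $\tau(F)\in[0,\tau(E)]$, the value of $\gamma$ is uniquely determined.

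The decisive step is then a computation with two intervals. For fixed $\tau=\tau(F)$ the admissible $\gamma$ form the intersection of the interval centered at $-\alpha$ of radius $\tau(E)-\tau$ with the interval centered at $\beta$ of radius $\tau$. Their radii sum to $\tau(E)$ regardless of $\tau$, while their centers are at distance $|\beta+\alpha|$. Thus the intersection reduces to a single point for all $\tau\in[0,\tau(E)]$ if and only if $|\beta+\alpha|=\tau(E)$; when $|\beta+\alpha|<\tau(E)$ the intersection is a nondegenerate interval for intermediate $\tau$, producing incomparable invariant subspaces. Therefore the unicellular quasi-nilpotent right inverses are exactly those with $|\beta+\alpha|=\tau(E)$, that is $\beta=\beta_\pm=\pm\tau(E)-\alpha$ --- at most two, at least one --- and inserting these into $s_\beta$ gives the stated operators $V_\pm$.

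For the coincidence clause, $V_+=V_-$ holds iff $s_+=s_-$, i.e. iff $e^{i\beta_+ z}=e^{i\beta_- z}$, which forces $\beta_+=\beta_-$ and hence $\tau(E)=0$; conversely, in that degenerate case the membership $s_\pm\in\cW H=e^{-i\alpha z}\cH(E)$ forces $\alpha=0$, since otherwise $e^{i\alpha z}s_\pm=(e^{i\alpha z}-1)/z$ would have positive exponential type and could not lie in the zero-type space $\cH(E)$. This yields coincidence precisely when $\tau(E)=\alpha=0$. I expect the main obstacle to be not this bookkeeping but its input: verifying that in the touching configuration $|\beta+\alpha|=\tau(E)$ the uniquely determined subspaces $e^{i\gamma(\tau)z}\cH(F)$, with $\gamma$ forced to move affinely with $\tau=\tau(F)$, are genuinely nested across different $F$, together with the precise identification of the rank-one correction of each $V_\beta$ in the model. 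Both rest on the structure theory of shifted de Branges chains underlying \eqref{v_beta_star_inv_intro}.
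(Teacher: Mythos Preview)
Your outline is correct and follows the paper's route: pass to the model, parametrize $L_\beta$-invariant subspaces as $e^{i\gamma z}\cH(F)$ subject to \eqref{v_beta_star_inv_intro}, and reduce unicellularity to the interval geometry $|\beta+\alpha|=\tau(E)$. Two remarks.

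First, the obstacle you flag at the end is not an obstacle at all, and the paper dispatches it in one line via Corollary~\ref{subspace:deBranges}. For $\beta=\beta_+$ your interval computation forces $\gamma+\alpha=\tau(E)-\tau(F)$; hence if $\cH(F)\subset\cH(G)$ in $\text{Chain}(\cH(E))$ the corresponding shifts satisfy $|\gamma_F-\gamma_G|=\tau(G)-\tau(F)$, which is exactly the inclusion criterion in Corollary~\ref{subspace:deBranges}, giving $e^{i\gamma_F z}\cH(F)\subset e^{i\gamma_G z}\cH(G)$. So the nested chain is automatic once $\gamma$ is pinned down, and your heuristic ``the only way total ordering can fail is through the shift'' is in fact rigorously true here.

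Second, your non-unicellularity argument is a mild simplification of the paper's. The paper exhibits two incomparable invariant subspaces built from \emph{different} de Branges subspaces $\cH(F),\cH(G)$ of carefully chosen types; you instead take a single $\cH(F)$ of intermediate type (whose existence is Lemma~\ref{lemma:subspace_of_given_type}) and two distinct admissible shifts $\gamma_1\ne\gamma_2$, which are incomparable by the uniqueness clause of Corollary~\ref{subspace:deBranges}. Both work. Finally, your coincidence argument is fine but can be shortened: $|\alpha|\le\tau(E)$ is already part of Theorem~\ref{theorem:near_invariance}, so $\tau(E)=0$ immediately forces $\alpha=0$.
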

		In particular, the original operator $V$ is unicellular if and only  if $|\alpha|=\tau(E)$.\\
	These general results have a number of applications 
	which are presented in \S \ref{some_applications}. 
These 	include the result mentioned above regarding invariant subspaces of Green's operators associated to canonical systems, as suggested  by C. Remling in \cite{MR3890099}.
It turns out, that for singular Hamiltonians these are again given by functions that vanish on a.e. on interval $(0,c)$. The non-singular case is more involved. Here we show that every such Green's operator has a rank one perturbation which is a Green's operator as well and is unicellular.\\
 The unicellularity result applies to compact self-adjoint operators with removable spectrum as defined above. We show that   under some natural assumptions, such operators possess a quasi-nilpotent rank one perturbation which is also
 unicellular. This completes the recent work of Baranov and Belov \cite{MR3292735} on the subject.

		The more difficult problem within this circle of ideas is to describe the invariant subspaces of $D$. The first step is to restrict $D$ to an appropriate space  where it acts as a continuous operator. This is the natural  analogue of $C^\infty$ defined by 
				\begin{equation*}
		C^{\infty}(D) = \bigcap_{n \geq 0} \mathscr{D}(D^{n})
		\end{equation*}
	which becomes a Fr\'echet space with respect to the translation invariant metric
		\begin{equation*}
		d(f,g) = \sum_{j} 2^{-j} \frac{\norm{D^{j}f-D^{j}g}_{H}}{1+\norm{D^{j}f-D^{j}g}_{H}}.
		\end{equation*}
	As in the classical case, we prove that the invariant subspaces of $D$ split naturally into three categories according to the spectrum of the restriction of $D$. Theorem \ref{thm:spectrum} shows that if $\cJ\subset C^\infty(D)$ is closed and $D-$invariant then $\sigma(D|\cJ)$ is either void, or a discrete set of eigenvalues, or the whole complex plane.

		In this paper we focus on closed $D-$invariant subspaces  $\cJ\subset C^\infty(D)$ with $\sigma(D|\cJ)=\emptyset$. These subspaces are called \emph{residual}. Our approach differs essentially from the approach in \cite{MR2419491}, which built heavily on the self-adjoint  operator of multiplication by independent variable, $Mf(x) = xf(x)$ 
		 on $L^2(a,b)$. A possible analogue in the  general context  would be a 
		solution of the Heisenberg commutation relation $$DM-MD=I. $$
		Unfortunately, it turns  out (see for example \cite{MR689997, MR716970} or Chapter 12 of \cite{MR1192782}) 
		that the admissible pairs for which the commutator equation has a self-adjoint solution are essentially unitarily equivalent to the classical one. Thus this 		
	 technique is not available to us and instead our approach is based on the theory of nearly invariant subspaces of de Branges spaces discussed  in Section \ref{sec:prelim}.
		
	Intuitively, the description of residual $D-$invariant subspaces  is related to finding the appropriate analogue of the space  of $C^\infty-$functions vanishing on a nontrivial interval, or vanish at one point together with all their derivatives. The idea developed in this paper is to describe such subspaces using  Volterra-invariance. Thus:\\
	1)   $C^\infty-$ functions vanishing on an  interval can be interpreted as
	$$\cJ_M=\{x\in C^\infty(D):~ D^jx\in M\},$$
	where $M\subset H$ is invariant for a quasi-nilpotent right inverse of $D$, In this case, by \eqref{v_beta_star_inv_intro} there exists a compact interval $J\subset [-\alpha-\tau(E), -\alpha+\tau(E)]$, such that $M$ is invariant for all operators $V_\beta,\beta\in J$.\\
	2) $C^\infty-$functions vanishing together with all their derivatives at a given point can be interpreted as 
	$$\cJ_M=\{x\in C^\infty(D):~ D^jx\in M\},$$
	where this time $M=V_\beta C^\infty(D)$  for some fixed $\beta\in [-\alpha-\tau(E), -\alpha+\tau(E)]$.
	
	Remarkably enough this leads to a complete description of residual subspaces in this quite general context.
	\begin{theorem}\label{1st_residual} Every $D-$invariant residual subspace $\cJ$ of $C^\infty(D)$ is of the form $\cJ_M$. The subspace $\cJ$ has the form 1) above if and only if $\cJ$ is not dense in $H$. In this case, the subspace $M\subset H$ is the closure of $\cJ$ in $H$.	
\end{theorem}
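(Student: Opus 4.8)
The plan is to pass to the functional model of \eqref{intro_model} and to split the equality $\cJ=\cJ_M$ into its two inclusions, taking the closure $M=\overline{\cJ}$ (in the $H$-norm) as the natural candidate for the defining subspace. The inclusion $\cJ\subseteq\cJ_{\overline{\cJ}}$ is immediate from $D$-invariance: if $x\in\cJ$ then $D^jx\in\cJ\subseteq\overline{\cJ}$ for every $j$, so $x\in\cJ_{\overline{\cJ}}$; moreover $\cJ_M$ is closed in $C^\infty(D)$ by construction. Thus all of the content lies in (a) identifying the structure of the candidate $M$, and (b) the reverse inclusion $\cJ_M\subseteq\cJ$ together with the density dichotomy.

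First I would transport everything through the generalized Fourier transform $\cW$, using $\cW H=e^{-i\alpha z}\cH(E)$. Under $\cW$ the operator $D$ becomes an extension of $M_z^*$, so that $D$-invariance of $\cJ$ is reflected as (near) $M_z^*$-invariance of $\cW\cJ$, while the residual hypothesis $\sigma(D|\cJ)=\emptyset$ means precisely that $\cJ$ contains no eigenvector $\phi_\mu$ of $D$, equivalently $\cW\cJ$ contains no reproducing kernel of the model. The next step is to invoke the description of the invariant subspaces of the quasi-nilpotent right inverses $V_\beta$: these are exactly the subspaces with $\cW M=(e^{i\gamma z}\cH(F))^\perp$, forming a chain indexed by the de Branges subspaces $\cH(F)\subseteq\cH(E)$ and a shift $\gamma$ constrained by \eqref{v_beta_star_inv_intro}. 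Applying the de Branges factorization theorem for nearly invariant subspaces to $\cW\cJ$, I expect the absence of reproducing kernels to rule out any extraneous structure function and force $\cW\overline{\cJ}$ into this chain form when $\cJ$ is not dense, so that $M=\overline{\cJ}\neq H$ is invariant for some $V_\beta$, which is case 1) of the theorem; when $\cJ$ is dense one has $\overline{\cJ}=H$, and the same analysis identifies the correct $M$ instead as the range $V_\beta C^\infty(D)$, which is case 2).

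The main obstacle is the reverse inclusion $\cJ_M\subseteq\cJ$, equivalently the rigidity statement that a residual subspace is determined by the data $M$. Concretely, given $x\in C^\infty(D)$ with $D^jx\in M$ for all $j$, one must produce $y_n\in\cJ$ with $D^jy_n\to D^jx$ in $H$ \emph{simultaneously} for every $j$, so that $x\in\cJ$ in the Fr\'echet topology. Each individual $D^jx$ lies in $M=\overline{\cJ}$ and is therefore $H$-approximable by elements of $\cJ$, but a single sequence must serve all orders at once. I expect to extract such simultaneous approximation from the void spectrum: the entire operator-valued resolvent $\la\mapsto(D-\la)^{-1}|_\cJ$, together with the exponential-type control governed by $\tau(E)$, should supply enough compactness and growth estimates to run a normal-families/diagonal argument that reconstructs $x$ from a holomorphic family supported in $\cJ$. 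The delicate point throughout is that $D$ is only an extension of $M_z^*$ and $V$ only a rank-one perturbation of the backward shift, so the vector $\phi_0$ and the perturbation terms $\langle\,\cdot\,,\cW^{-1}s_\pm\rangle\phi_0$ entering $V_\pm$ must be tracked carefully to guarantee that the $V_\beta$-invariance of $M$ is preserved under the reconstruction. Once $\cJ_M\subseteq\cJ$ is established the equality $\cJ=\cJ_M$ follows, and combining this with the density dichotomy and the identification $M=\overline{\cJ}$ in the non-dense case completes the proof.
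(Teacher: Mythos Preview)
Your proposal has a genuine gap at the reverse inclusion $\cJ_M\subseteq\cJ$, and the mechanism you sketch---simultaneous Fr\'echet approximation via a normal-families argument on the resolvent---is not how the paper proceeds and does not obviously close. Knowing that each $D^jx$ is an $H$-limit of elements of $\cJ$ gives you, for each $j$, a separate approximating sequence; nothing in your outline explains how to splice these into a single $y_n\in\cJ$ with $D^jy_n\to D^jx$ for all $j$, and the resolvent $(D-\lambda)^{-1}|_\cJ$ only moves you around inside $\cJ$, not from $\cJ_M$ into $\cJ$.

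The paper works dually. Instead of approximating, it describes the annihilator $\cJ^\perp\subset (C^\infty(D))'$ completely and then checks $\cJ^\perp\subseteq\cJ_M^\perp$. The engine is a division lemma: if $\varphi\in\cJ^\perp$ has order $N$ and $\cW\varphi(\lambda)=0$, then $\varphi_\lambda(x):=\varphi\bigl(V(I-\bar\lambda V)^{-1}x\bigr)$ again annihilates $\cJ$, has order $N-1$, and satisfies $\cW\varphi_\lambda=\cW\varphi/(z-\lambda)$. This is precisely where the resolvent formula enters---but on the annihilator side, not on $\cJ$. Iterating, any $\varphi\in\cJ^\perp$ whose transform has infinitely many zeros yields nonzero order-zero annihilators, i.e.\ vectors in $\cJ_H^\perp\subset H$; the nearly-invariant classification is then applied to $\cW(\cJ_H^\perp)$ (not to $\cW\cJ$ or $\cW\overline{\cJ}$, which are not subspaces of the model Hilbert space), giving $\cW(\cJ_H^\perp)=e^{i\sigma z}\cH(F)$. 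One shows every $\cW\varphi$, $\varphi\in\cJ^\perp$, is either a polynomial multiple of some $g\in e^{i\sigma z}\cH(F)$ or of some $e^{i\beta z}$ with $|\beta-\sigma|\le\tau(F)$, and that all such functionals kill $\cJ_{\cJ_H}$. Hence $\cJ^\perp\subseteq\cJ_{\cJ_H}^\perp$, which is the reverse inclusion.

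Your handling of the dense case is also incomplete. When $\overline{\cJ}=H$ the choice $M=H$ gives $\cJ_M=C^\infty(D)$, so you must pass to $M=V_\beta C^\infty(D)$; but $\beta$ is determined by $\cJ$ and you do not say how. In the paper this again drops out of the annihilator description: density in $H$ means there are no order-zero annihilators, so by the division lemma every $\cW\varphi$ has only finitely many zeros, hence $\cW\varphi=p\,e^{i\beta z}$; residuality forces a single $\beta$, and comparing with the computation of $(V_\beta C^\infty(D))^\perp$ identifies $\cJ$ with $\cJ_{V_\beta C^\infty(D)}$.
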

		A very natural and basic question which arises is whether subspaces of the form 1) are intersections of subspaces of form 2). More precisely, if $\cJ_M$ is of the form 1) with corresponding interval $J$, it is true 	that 
	$$\cJ_M=\bigcap_{\beta\in J}V_\beta C^\infty(D)\,?$$
	The answer is negative in this generality, and the reason is that the representation $\cJ=\cJ_M$ is not unique despite the fact that the interval $J$ in 1) is uniquely determined by $\cJ$. 
	This is a quite subtle phenomenon which we discuss is discussed at the end of  \S \ref{annihilator_residual}.
	 However, in Theorem \ref{zero-based} we prove that given an interval $J\subset [-\alpha-\tau(E), -\alpha+\tau(E)]$, with nonvoid interior, 
	$$\cJ_0=\bigcap_{\beta\in J}V_\beta C^\infty(D),$$
	is the largest residual subspace corresponding to this interval. In the standard case this is also the unique such subspace and it turns out that this continues to hold for other cases as well, more precisely, when the 	chain of de Branges subspaces of $\cH(E)$ is \emph{thin} in the sense of Belov and Borichev  \cite{MR4507623}.\\
	The opposite situation occurs when the  Hermite-Biehler function $E$ in our model has exponential type zero. In this case all intervals above reduce to $\{0\}$ and in Corollary \ref{tau=0}
	we prove that the set of residual subspaces is totally ordered.
	For example (see Corollary \ref{schrodinger_last}), this implies that for any regular Schr\"odinger operator $D$ on $[a,b]$ with a separated boundary condition at $a$, a residual subspace consists of functions that vanish on $[a,c],~a<c<b$, or  when $c=a$, it has the form described in 2) with $\beta=a$.	
		
The paper is organized as follows. Section \ref{sec:examples} contains a list of examples of admissible pairs which will be used to illustrate the general results. Section \ref{sec:prelim}
contains preliminary material  regarding de Branges spaces of entire functions and their connection to canonical systems. 	Section	\ref{sec:model} regards the model associated to an admissible pair. Unicellularity of generalized Volterra operators and their invariant  subspaces are considered in Section \ref{sec:new_deBranges_results}.  Section
\ref{sec:D_invariant} regards the invariant subspaces of $D$, especially  the structure of residual subspaces.

	\end{section}
	
	\begin{section}{Examples}\label{sec:examples}
		We begin by providing a large class of examples of admissible pairs $D$ and $V$.
		\subsection{The classical case}
		Let $D = -i\frac{d}{dx}$ be defined on the absolutely continuous functions on $(0,1)$,
        \begin{equation*}
            \mathscr{D}(D) = \left\{f \in AC(0,1): f, Df \in L^{2}(0,1) \right\}. 
        \end{equation*}
        It is well known that $D$ has simple eigenvalues at each point in the plane with eigenfunction $e^{i\lambda x}$ and self-adjoint restrictions with compact resolvent. Also it is clear that the classical Volterra operator
		\begin{equation*}
			Vf(x) = i\int_{0}^{x} f(t)dt,
		\end{equation*}
		is a quasi-nilpotent right inverse of $D$.
		
		\subsection{Regular Schr\"odinger operators}\label{Schr_reg}
		
		Let $-\infty < a < b < \infty$ and consider the Schrödinger operator $D = -\frac{d^{2}}{dx^{2}} + q$, where $q \in L^{1}(a,b)$ is real-valued. For the facts to come we refer the reader to any one of the references \cite{MR216050, MR566954, MR3243083}. The papers \cite{MR1943095, MR2215727, MR4033521} also treat Schrödinger equations with a similar flavor to our work. For $f \in L^{2}(a,b)$ consider the Cauchy problem
		\begin{equation}\label{eq:cauchyproblem}
			\begin{cases}
				(D-\lambda)u = f \\
				u(a) = 0 \\
				\dot{u}(a) = 0
			\end{cases}.
		\end{equation}
		By a solution to \eqref{eq:cauchyproblem} we shall mean a function $u$ belonging to the maximal domain of $D$
		\begin{equation*}
			\mathscr{D}(D) = \left\{ u \in \text{AC}(a,b) : \dot{u} \in \text{AC}(a,b) \text{ and } u, -\ddot{u}+qu \in L^{2}(a,b) \right\},
		\end{equation*}
		satisfying $(D-\lambda)u = f$ pointwise a.e. on $(a,b)$ and $u(a) = \dot{u}(a) = 0$. It is well known that the Cauchy problem has a unique solution $u_{\lambda} \in \mathscr{D}(D)$ for each $\lambda \in \mathbb{C}$, $f \in L^{2}(a,b)$ and $\norm{u_{\lambda}}_{2} \leq C_{\lambda}\norm{f}_{2}$, where $C_{\lambda} > 0$ is a constant not depending on $f$. Also the map $\lambda \mapsto u_{\lambda}(t)$ is entire for each fixed $t$. In particular the map $V_{\lambda}f = u_{\lambda}$ defines a bounded linear operator, such that $(D-\lambda)V_{\lambda}f = f$ for all $f \in L^{2}(a,b)$. We let $V = V_{0}$. The operator $V_{\lambda}$ is a Hilbert-Schmidt integral operator. Indeed if $v_{\lambda}$ and $u_{\lambda}$ are any two linear independent solutions and $W(u_{\lambda}, v_{\lambda})$ is their Wronskian, then
		\begin{equation}\label{eq:Green_operator}
			V_{\lambda}f(x) = \frac{1}{W(u_{\lambda},v_{\lambda})}\int_{a}^{x} \left( u_{\lambda}(x)v_{\lambda}(t) - u_{\lambda}(t)v_{\lambda}(x) \right)f(t)dt.
		\end{equation}
		Note that the integral kernel vanishes on the diagonal. It is not difficult to see that $V$ is quasi-nilpotent. By separated boundary condition $(\alpha)$, $0 \leq \alpha < \pi$, at $a$ we mean the functions in the maximal domain $\mathscr{D}(D)$ satisfying
		\begin{equation*}
			u(a)\cos(\alpha) - \dot{u}(a)\sin(\alpha) = 0.
		\end{equation*}
		We define separated boundary conditions at $b$ analogously. Selecting any separated boundary condition at $a$ gives an admissible pair $D$, $V$.

		\subsection{Canonical systems}\label{sec:canonical_systems}
		The following class of examples provides a far-reaching generalization of the regular Schr\"odinger operators from above.
        
		Fix $0 < \ell < \infty$. Let $\bH : (0,\ell) \to \mathbb{R}^{2\times 2}$ be a positive, $\bH \geq 0$, real-matrix valued function on $(0,\ell)$. To simplify the exposition we assume that $\bH$ has constant trace equal to $1$ on $(0,\ell)$. A canonical system is a differential equation of the form
		\begin{equation}\label{eq:canonical_system}
		\Omega \partial_{t} X(t) = \lambda \bH(t)X(t), \; \; \; \; \; \Omega = \begin{pmatrix} 0 & -1 \\ 1 & 0\end{pmatrix}, \; \lambda \in \mathbb{C}.
		\end{equation}
		The matrix-valued function $\bH$ is called the \emph{Hamiltonian} of the canonical system. 
		
		Let $L^{2}((0, \ell);\bH)$ be the weighted $L^2-$space on $(0,\ell)$ with norm
		\begin{equation*}
			\norm{f}_{L^{2}((0, \ell);H)}^{2} = \int_{0}^{\ell} \langle H(x)f(x), f(x) \rangle_{\mathbb{C}^{2}} dx < \infty,
		\end{equation*}
		where we identify elements $f,g$ with  with $\norm{f-g}_{L^{2}((0,\ell);\bH)} = 0$.
		
		In order to construct admissible pairs, the simplest case is when $\det(\bH)\ne 0$ a.e.. In this case, the unbounded operator $D$ given by 
		$$D=\bH^{-1}\Omega\frac{d}{dt}$$
		with maximal domain of definition is closed.  Self-adjoint restrictions with compact resolvent can be easily found using boundary conditions.\\
		Now define $V$ by
		\begin{equation}\label{def_V}
			Vf(x) = -\Omega \int_{0}^{x} \bH(t)f(t)dt, f \in \mathscr{H},
					\end{equation} 
and note that it satisfies $DV=I$.	
\begin{proposition}\label{V_canonical} The operator $V$ defined by \eqref{def_V} is compact and quasi-nilpotent on $L^{2}((0, \ell);\bH)$.
	\end{proposition}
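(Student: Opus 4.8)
The plan is to conjugate $V$ to an ordinary (unweighted) vector-valued $L^2$ space, where it becomes a genuine Volterra integral operator with a bounded kernel, and then run the two classical arguments: a Hilbert--Schmidt bound for compactness and an estimate on the iterated kernels for quasi-nilpotency. Since $\bH\ge 0$ has trace $1$, its eigenvalues lie in $[0,1]$, so $\bH\le I$ and the measurable square root $\bH^{1/2}$ satisfies $\norm{\bH^{1/2}(x)}\le 1$ pointwise. Because $\det\bH\ne 0$ a.e., the multiplication operator $U\colon L^2((0,\ell);\bH)\to L^2((0,\ell);\bC^2)$ into the standard Lebesgue space, $Uf=\bH^{1/2}f$, is unitary: it is isometric since $\norm{Uf}_2^2=\int_0^\ell\langle\bH f,f\rangle=\norm{f}_{\bH}^2$, and it is onto with inverse $g\mapsto\bH^{-1/2}g$. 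A direct computation gives that $\tilde V:=UVU^{-1}$ acts on $L^2((0,\ell);\bC^2)$ as
\begin{equation*}
\tilde Vg(x)=-\bH^{1/2}(x)\,\Omega\int_0^x\bH^{1/2}(t)\,g(t)\,dt,
\end{equation*}
i.e. as the integral operator with kernel $\tilde K(x,t)=-\bH^{1/2}(x)\,\Omega\,\bH^{1/2}(t)$ for $t<x$ and $0$ for $t>x$. As $U$ is unitary, $V$ is compact (resp. quasi-nilpotent) exactly when $\tilde V$ is.

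For compactness, the bound $\norm{\bH^{1/2}}\le 1$ together with $\norm{\Omega}=1$ gives $\norm{\tilde K(x,t)}\le 1$ for all $x,t$; since $(0,\ell)$ has finite measure this makes $\tilde K$ square-integrable, so $\tilde V$ is Hilbert--Schmidt, hence compact.

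For quasi-nilpotency I would estimate the iterates. The kernel of $\tilde V^n$ is the $n$-fold composition, supported on $\{t<s_{n-1}<\dots<s_1<x\}$, and each of the $n$ factors has operator norm at most $1$, so $\norm{\tilde K_n(x,t)}\le (x-t)^{n-1}/(n-1)!\le \ell^{n-1}/(n-1)!$. Feeding this into the Cauchy--Schwarz estimate for the integral operator yields $\norm{\tilde V^n}\le \ell^{n}/(n-1)!$, whence $\norm{\tilde V^n}^{1/n}\to 0$ and the spectral radius of $\tilde V$, equivalently of $V$, is $0$. Since $V$ is a compact operator on an infinite-dimensional space it cannot be invertible, so $0\in\sigma(V)$ and therefore $\sigma(V)=\{0\}$.

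The routine parts are the two kernel estimates; the only point requiring genuine care is the reduction itself, namely that the weighted inner product and the possible degeneration of $\bH$ (eigenvalues approaching $0$, where $\bH^{-1/2}$ blows up) do not obstruct $U$ from being unitary. This is exactly where the assumptions $\det\bH\ne 0$ a.e. and constant trace $1$ enter, the latter also being what bounds the kernel and makes the Volterra estimate uniform.
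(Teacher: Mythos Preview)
Your proof is correct, but it takes a genuinely different route from the paper's. The paper works directly in the weighted space: for compactness it observes that $\bH\le I$ gives a uniform sup-norm bound on $Vf$ together with equicontinuity, then invokes Arzel\`a--Ascoli; for quasi-nilpotency it uses that a compact operator with no nonzero eigenvalues has spectrum $\{0\}$, and rules out eigenvalues by noting that $(I-\lambda V)f=0$ forces $f$ to solve the canonical system with zero initial data, hence $f=0$ by ODE uniqueness.

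Your approach---conjugating by $\bH^{1/2}$ to the unweighted space, then reading off Hilbert--Schmidt and the standard Volterra factorial bound $\norm{\tilde V^n}\le \ell^n/(n-1)!$---gives strictly more: it shows $V$ is Hilbert--Schmidt, not merely compact, and yields an explicit decay rate for the iterates. The paper's argument, on the other hand, needs no conjugation and never uses $\det\bH\ne 0$; in particular it applies unchanged to singular Hamiltonians, which matters later in the paper when Proposition~\ref{V_canonical} is invoked on $\mathscr{H}_\bH$. (Your argument could be salvaged there too, since $U$ is still an isometric embedding and $V=U^*\tilde V U$, but you have not said this.) Two minor remarks: the non-emptiness of $\sigma(V)$ follows already from $V$ being bounded on a complex Banach space, so the extra sentence about compactness and infinite dimension is unnecessary; and your factorial estimate on $\tilde K_n$ is the only place where the Volterra (triangular) structure of the kernel is actually used.
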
	
\begin{proof}
Since $\text{trace}(\bH)=1$ we have $\bH(x)\le I$ and
	$$\|Vf\|\le \int_0^x \sqrt{\langle \bH f(t), f(t)\rangle }dt.$$
With a standard argument	we conclude for any bounded sequence $(f_n)$ in  $L^{2}((0, \ell);\bH)$, that $(Vf_n)$ is uniformly bounded and equicontinuous, hence by the Arzela-Ascoli theorem, it has a uniformly subsequence. This implies the compactness of $V$.  \\ To see that  $V$ is quasi-nilpotent it suffices to check that $V$ has no eigenvalues. Obviously, $0$ is not an eigenvalue.  If $(I-\lambda V)f = 0$, note that  both $0$ and $f$ are solutions to the Cauchy problem
	\begin{equation*}
		\begin{cases}
			\Omega X' = \lambda \bH X \\
			X(0) = 0
		\end{cases}
	\end{equation*}
which gives $f=0$ a.e. and completes the proof.\end{proof}	
The case when $\bH$ is not invertible a.e. is  more involved because it requires an appropriate modification of the definition of $D$, which will then lead to a modified definition of the underlying Hilbert space. Here we shall only give a brief overview of the necessary steps and refer the reader to Section 2 of \cite{https://doi.org/10.48550/arxiv.1408.6022} or the first few chapters of \cite{MR3890099} for a detailed account.	Let us note from the beginning that if $\bH$ is singular on a set of positive measure any element $f\in L^{2}((0, \ell);\bH)$ is identified with its projection on the range of $\bH$.

We start with	the following technical definition which plays an important role.			
\begin{definition}
			A singular interval is an interval $I = (a,b) \subset (0, \ell)$, such that $\bH(x) = \langle \cdot, e_{\alpha} \rangle e_{\alpha}$, a.e. for some $e_{\alpha} = (\cos(\alpha), \sin(\alpha))^{T}$ and $(a,b)$ is contained in no larger interval where the same is true. A point $t \in (0,\ell)$ is called singular if it belongs to the interior of a singular interval, otherwise it is called regular.
		\end{definition}
Let us consider  the closed subspace 	$\mathscr{H}_\bH$ of $L^{2}((0,\ell);H)$ consisting of functions  which are constant a.e. on singular intervals,	that is, 	
	\begin{equation*}
		\mathscr{H}_\bH = \bigcap_{I\text{ singular}}\left\{ f \in L^{2}((0,\ell);\bH) : \bH f|I  \text{ is constant a.e.} \right\}.
	\end{equation*}
Next we define the domain of $D$  as the subspace 	$\mathscr{D}(D)\subset \mathscr{H}_\bH$ consisting of $f\in \mathscr{H}_\bH$ with 
 \begin{enumerate}[(i)]
		\item $f$  is absolutely continuous on  $(0,\ell)$, 
	\item  there exists  $g \in \mathscr{H}$ with $\Omega f' = Hg$, 
	\item  $\langle f(0),(0,1)^{T} \rangle = 0.$
	\end{enumerate}
	The main problem which arises here is that this domain in not necessarily dense in $\mathscr{H}_\bH$. However, one can address this difficulty with help of some natural assumptions. A straightforward modification of the arguments in the proofs of Corollary 2, Theorem 2 in Section 2 of \cite{https://doi.org/10.48550/arxiv.1408.6022} (by just omitting one boundary condition) leads to the following result.
	\begin{theorem}\label{D_def_sing}
	If $\bH$ satisfies 
	\begin{enumerate}[(i)]
		\item There exists no $\epsilon > 0$, such that $H(x) = \langle \cdot, e_{\pi/2} \rangle e_{\pi/2}$ a.e on $(0,\epsilon)$, with $e_{\pi/2} = (0,1)^{T}$.
		\item There exists no $\epsilon > 0$, such that $(\ell-\epsilon, \ell)$ is a singular interval,
	\end{enumerate}
	then $\mathscr{D}(D)$ is dense in $\mathscr{H}_\bH$ and the operator $D:\mathscr{D}\mapsto \mathscr{H}_\bH$ by $Df=g$, with $g$ given by (ii) is well-defined, closed and possesses a self-adjoint restriction.	
	\end{theorem}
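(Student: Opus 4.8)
The plan is to reduce the statement to Corollary~2 and Theorem~2 of Section~2 in \cite{https://doi.org/10.48550/arxiv.1408.6022}, where the corresponding facts are proved for the operator carrying separated boundary conditions at \emph{both} endpoints, and then to track the single change produced by discarding the condition at $\ell$. Write $\widetilde{D}$ for the operator obtained from $D$ by additionally imposing a fixed separated boundary condition at $\ell$. The hypotheses (i) and (ii) are exactly the one-sided non-degeneracy requirements needed there: (i) guarantees that the boundary condition (iii), taken in the direction $e_{\pi/2}$, does not degenerate against a singular interval at $0$ pointing in the same direction, while (ii) removes a singular interval abutting $\ell$. Under these hypotheses the cited results show that $\widetilde{D}$ is self-adjoint on $\mathscr{H}_\bH$ with compact resolvent; in particular $\mathscr{D}(\widetilde{D})$ is dense.

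I would first check that $Df=g$ is well defined. The vector $g$ in (ii) enters only through the product $\bH g=\Omega f'$, which is fixed a.e. by $f$. Since every element of $L^2((0,\ell);\bH)$ is identified with its $\bH$-projection, this determines $g$ as an element of the space, and the requirement that $D$ map into $\mathscr{H}_\bH$ pins down $g$ on the singular intervals, where $\bH=\langle \cdot, e_\alpha\rangle e_\alpha$ forces $\Omega f'$ to be a multiple of $e_\alpha$. Hypothesis (i) moreover ensures $\bH_{11}\not\equiv 0$ near $0$, so the constant $(1,0)^{T}$ is a nonzero element of $\mathscr{H}_\bH$ annihilated by $D$; together with the boundary condition (iii) this makes the kernel of $D$ exactly one-dimensional.

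Density of $\mathscr{D}(D)$ is then immediate, since dropping one scalar boundary constraint only enlarges the domain: $\mathscr{D}(\widetilde{D})\subset\mathscr{D}(D)$, and the former is already dense. For closedness I would exhibit a bounded right inverse. As in Proposition~\ref{V_canonical}, the Green-type operator $V$ of \eqref{def_V}, reinterpreted on $\mathscr{H}_\bH$, maps into $\mathscr{D}(D)$ with $DV=I$, so every $f\in\mathscr{D}(D)$ splits as $f=V(Df)+Pf$ with $P$ the projection onto the one-dimensional kernel. If $f_n\to f$ and $Df_n\to g$, then boundedness of $V$ gives $V(Df_n)\to Vg$, hence $Pf_n$ converges, $f=Vg+\lim_n Pf_n\in\mathscr{D}(D)$ and $Df=g$; thus $D$ is closed. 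The desired self-adjoint restriction is precisely $\widetilde{D}$.

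The main obstacle is the density step, or rather the input it relies on: verifying that the approximation arguments of \cite{https://doi.org/10.48550/arxiv.1408.6022} survive the presence of singular intervals once a single boundary condition is dropped. This is the whole point of hypotheses (i) and (ii); without them the candidate domain may fail to be dense, or the kernel may collapse, and the reduction to $\widetilde{D}$ breaks down. Everything else is a routine transcription of the regular, two-endpoint theory to the present one-endpoint setting.
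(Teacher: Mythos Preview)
Your proposal is correct and follows exactly the route indicated in the paper: the authors do not give a self-contained proof but simply state that the result follows by ``a straightforward modification of the arguments in the proofs of Corollary~2, Theorem~2 in Section~2 of \cite{https://doi.org/10.48550/arxiv.1408.6022} (by just omitting one boundary condition),'' which is precisely your reduction to the two-endpoint operator~$\widetilde{D}$. Your added details---the well-definedness check, the one-dimensionality of $\ker D$ via the constant $(1,0)^T$, and closedness through the decomposition $f=V(Df)+(f-V(Df))$ with $V$ bounded---are all sound and in fact go beyond what the paper spells out; the paper only remarks, just after the theorem, that $\mathscr{H}_\bH$ is $V$-invariant and then invokes Proposition~\ref{V_canonical} to conclude that $(D,V)$ is admissible.
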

Finally, note that $\mathscr{H}_\bH$ is invariant for $V$. Indeed, let $I$ be a singular interval and let $f\in \mathscr{H}_\bH$. Then 
since $\Omega e_\alpha\in \ker\bH(x),~x\in I$, it is easily seen that $Vf\in \mathscr{H}_\bH$. Then Proposition \ref{V_canonical} shows that $D,V$ is an admissible pair.
		
		\subsection{Self-adjoint operators with removable spectrum}\label{sec:compact_A}
		
		Let $A$ be an injective compact self-adjoint operator with simple removable spectrum, i.e. there exists $x,y \in H$, such that the rank one perturbation
		\begin{equation}\label{qn_pert}
		Vu = Au + \langle u, x \rangle y, \quad u \in H,
		\end{equation}
		is  quasi-nilpotent. Such operators were studied in \cite{MR3292735}. Let us assume that $y\notin \text{Ran}(A)$ and define 
		$\mathscr{D}(D) = \text{Ran}(A) +\bC y$ and 
		\begin{equation}\label{def_D_sa_case}
			D : \mathscr{D}(D) \subset H \to H, \; \; D(Au+cy) = u.
		\end{equation}
		
	\begin{proposition}\label{removable_spectrum}  Let $A$ be be an injective compact self-adjoint operator with simple removable spectrum, let $V$ be the quasi-nilpotent operator  given by \eqref{qn_pert} and assume that $y\notin \text{Ran}(A)$. If $D$ is given by \eqref{def_D_sa_case}, then $D,V$ is an admissible pair.
	\end{proposition}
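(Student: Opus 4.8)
The plan is to verify the five defining properties of an admissible pair one at a time; most follow directly from the construction of $D$ and $V$, and the single point that requires an idea is the production of a self-adjoint restriction with compact resolvent in (iii). I would dispose of (iv) and (v) first. Since $A$ is compact and $u\mapsto\langle u,x\rangle y$ is rank one, $V=A+\langle\cdot,x\rangle y$ is compact, and its quasi-nilpotency $\sigma(V)=\{0\}$ is precisely the removable-spectrum hypothesis, so (iv) holds. For (v), $Vu=Au+\langle u,x\rangle y$ lies in $\text{Ran}(A)+\bC y=\mathscr{D}(D)$ by construction, and the defining rule $D(Au+cy)=u$ gives $DVu=u$, so $\text{Ran}(V)\subset\mathscr{D}(D)$ and $DV=I$. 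Before going further one must check that $D$ is well defined, and this is where the hypothesis $y\notin\text{Ran}(A)$ enters: if $Au_1+c_1y=Au_2+c_2y$ then $A(u_1-u_2)=(c_2-c_1)y$, and $c_1\neq c_2$ would place $y\in\text{Ran}(A)$, a contradiction, so $c_1=c_2$ and then injectivity of $A$ forces $u_1=u_2$. Consequently $\ker D=\bC y$ is one-dimensional (note $y\neq 0$ since $0\in\text{Ran}(A)$), giving (ii), and density of $\mathscr{D}(D)$ follows from $\text{Ran}(A)\subset\mathscr{D}(D)$ together with $\overline{\text{Ran}(A)}=(\ker A)^{\perp}=H$, using that $A$ is self-adjoint and injective.

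The heart of the matter is (iii), and the key observation I would exploit is that the restriction of $D$ to $\text{Ran}(A)$ is exactly $A^{-1}$: for $f=Au\in\text{Ran}(A)$ one has $Df=u=A^{-1}f$. Since $A$ is injective, compact, and self-adjoint, $A^{-1}$ with domain $\text{Ran}(A)$ is self-adjoint, and because $(A^{-1})^{-1}=A$ is compact it has compact resolvent. Thus $A^{-1}=D|_{\text{Ran}(A)}$ is precisely the self-adjoint restriction with compact resolvent required by (iii), and no extra construction is needed.

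It remains to establish the closedness in (i). This same identification shows $D$ is the extension of the closed operator $A^{-1}$ by the single vector $y\notin\text{Ran}(A)$ with $Dy=0$, but to keep the argument self-contained I would argue directly: given $f_n\in\mathscr{D}(D)$ with $f_n\to f$ and $Df_n\to g$, write $f_n=A(Df_n)+c_ny$; then $A(Df_n)\to Ag$ by continuity of $A$, so $c_ny=f_n-A(Df_n)$ converges, and pairing with $y$ yields $c_n\to c:=\langle f-Ag,y\rangle/\norm{y}^2$. Hence $f=Ag+cy\in\mathscr{D}(D)$ with $Df=g$, so $D$ is closed. The only step that is more than bookkeeping is (iii), and even there the difficulty dissolves once the identification $D|_{\text{Ran}(A)}=A^{-1}$ is made; the most delicate point to guard against is rather the well-definedness of $D$, where the hypothesis $y\notin\text{Ran}(A)$ together with the injectivity of $A$ must be invoked carefully, since it underlies both the decomposition $\mathscr{D}(D)=\text{Ran}(A)\,\dot{+}\,\bC y$ and the extraction of the scalar limits in the closedness argument.
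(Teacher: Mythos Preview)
Your proposal is correct and follows essentially the same route as the paper: both verify density via $\overline{\text{Ran}(A)}=H$, closedness by writing $f_n=A(Df_n)+c_ny$ and extracting the limit of $(c_n)$, and produce the self-adjoint restriction as $B=A^{-1}$ with domain $\text{Ran}(A)$. The only cosmetic difference is that the paper checks self-adjointness of $B$ by testing on an orthonormal eigenbasis of $A$, whereas you invoke the general fact that the inverse of a bounded injective self-adjoint operator is self-adjoint; your observation that $0\in\rho(A^{-1})$ with resolvent $A$ compact is a clean way to get compact resolvent.
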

\begin{proof} To verify condition (i) note that $\mathscr{D}(D)$ is dense in $H$, since $\text{Ran}(A)$ is. To see that $D$ is closed, assume that $u_n=Av_n+c_ny\in \mathscr{D}$ and  $v_n=Du_n$ converge to $u$ respectively to $v$. Then $(c_n)$ converges to $c\in \bC$, i.e. $u=Av+cy\in\mathscr{D}$ and $Du=v$. (ii) is obvious, as well as 
(iv) and (v). To see (iii), define the unbounded operator $B$ on $H$ by $\mathscr{D}(B)=\text{Ran}(A)$ and $BAu=u$.
Recall that $0\notin\sigma(A)$ and let $\{u_\la,~\la \in \sigma(A)\}$ be an orthonormal basis of eigenvectors of $A$. Then each $u_\la\in \mathscr{D}(B)$ and $Bu_\la=\la^{-1}u_\la$, which easily implies that $B$ is self-adjoint, and obviously, a restriction of $D$.\end{proof}	
	\end{section}

	\begin{section}{Preliminaries}\label{sec:prelim} We shall use standard notations for the usual function spaces on the upper half-plane $\bC_+$. Thus  $H^{2} = H^{2}(\mathbb{C}_{+})$ is the Hardy space and equals the (inverse) Fourier transform of $L^2(0,\infty)$. Equivalently $H^{2}(\mathbb{C}_{+})$ consists of  analytic functions $f$ in $\bC_+$ with 
		\begin{equation*}
		\norm{f}_{2}^{2} = \sup_{y > 0} \int_{-\infty}^{\infty} \lvert f(x+iy) \rvert^{2}dx < \infty.
		\end{equation*}
	$H^{\infty} = H^{\infty}(\mathbb{C}_{+})$ denotes the space of bounded analytic functions in $\mathbb{C}_{+}$.  $N(\mathbb{C}_{+})$ denote the functions of bounded type, or the Nevanlinna class in the upper-half plane, i.e. analytic functions of the form $f = g/h$, with $g, h \in H^{\infty}(\mathbb{C}_{+})$. All three spaces have identical definitions in the lower half-plane $\bC_-$.
    
	The mean-type  of  functions $h$ analytic in $\mathbb{C}_{\pm}$ is defined as
	\begin{equation*}
		\text{m.t.}(h) = \limsup_{y \to \pm\infty} \frac{\log \lvert h(iy) \rvert}{y}.
	\end{equation*}	
	For example, 	the mean-type of a function, $f$, of bounded type  is given by minus the exponent of $e^{i\alpha z}$ appearing in the inner-outer factorization of $f$.
	
	For entire functions $f$ one defines the (exponential) type	
	 type $\tau(f)$  as
		\begin{equation*}
		\tau(f) = \limsup_{\lvert z \rvert \to \infty} \frac{\log \lvert f(z) \rvert}{\lvert z \rvert}.
		\end{equation*}
			These notions are connected by a famous theorem of Krein 
	(see \cite{MR1400006}, p. 115, or \cite{MR1303780}, Part 2).	
		\begin{theorem}\label{Krein}
			Let $f$ be an entire function of bounded type in both half planes, $\mathbb{C}_{\pm}$. Then $f$ is of finite exponential type and its exponential type is equal to the maximum of its mean-type in the upper and lower half-planes.
		\end{theorem}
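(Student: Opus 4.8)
The plan is to prove the sharper quantitative statement $\tau(f)=\max(v_+,v_-)$, where
$v_+=\limsup_{y\to+\infty}\tfrac1y\log|f(iy)|$ and $v_-=\limsup_{y\to+\infty}\tfrac1y\log|f(-iy)|$ are the mean types of $f$ in the upper and lower half-planes; finiteness of the type will drop out of the same estimate. One of the two inequalities is essentially free: restricting the $\limsup$ that defines $\tau(f)$ to the rays $z=\pm iy$ gives immediately $\tau(f)\ge\max(v_+,v_-)$. The whole content of the theorem is therefore the reverse bound $\tau(f)\le\max(v_+,v_-)$ (which in particular shows $\tau(f)<\infty$), and this is what the argument below is built to produce.

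First I would record the one-half-plane structure. By the Smirnov--Nevanlinna canonical factorization of a function of bounded type in $\mathbb{C}_{+}$, the boundary values of $f$ exist nontangentially a.e. on $\mathbb{R}$ and satisfy the Krein integrability condition $\int_{\mathbb{R}}\frac{\log^{+}|f(t)|}{1+t^{2}}\,dt<\infty$, the mean type $v_+$ is a genuine limit, and discarding the Blaschke factor together with the nonpositive part of the singular measure (keeping only the exponential ``mass at infinity'' and the outer factor) yields the harmonic majorization
\[
\log|f(z)|\le v_+\,\Im z+\frac{1}{\pi}\int_{\mathbb{R}}\frac{\Im z}{(\Re z-t)^{2}+(\Im z)^{2}}\,\log|f(t)|\,dt,\qquad \Im z>0 .
\]
The identical analysis in $\mathbb{C}_{-}$ (with $|\Im z|$ in place of $\Im z$ and $v_-$ in place of $v_+$) gives the matching estimate there, built from the \emph{same} boundary density $\log|f(t)|$ on $\mathbb{R}$.

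Writing $P[\log|f|]$ for that common Poisson integral, the two estimates combine to
$\log|f(z)|\le\max(v_+,v_-)\,|\Im z|+P[\log|f|](z)$ throughout $\mathbb{C}\setminus\mathbb{R}$. It remains to show that the Poisson term contributes nothing to the exponential type, i.e. $P[\log|f|](z)=o(|z|)$ as $|z|\to\infty$. This is the \textbf{main obstacle}. Because $\log^{+}|f|\in L^{1}\!\big(\tfrac{dt}{1+t^{2}}\big)$, a direct scaling estimate shows the Poisson integral grows only logarithmically along rays that stay away from $\mathbb{R}$ (e.g. it is $O(\log y)$ on the imaginary axis), so those directions are harmless; the delicate regime is the directions \emph{tangent} to the real axis, where the naive Poisson bound degenerates. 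Here I would invoke a Phragm\'en--Lindel\"of argument in each of the four quadrants (equivalently, the two-constants/least-harmonic-majorant theorem), using precisely the integrability of the boundary modulus to rule out a linear contribution near $\mathbb{R}$; this is exactly the point at which Krein's condition is indispensable and is the technical heart of the classical proofs in \cite{MR1400006, MR1303780}.

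Granting the sublinearity of the Poisson term, the combined estimate gives
$\limsup_{|z|\to\infty}\tfrac{\log|f(z)|}{|z|}\le\max(v_+,v_-)$, so $f$ is of finite exponential type with $\tau(f)\le\max(v_+,v_-)$. Together with the free lower bound from the first paragraph this yields $\tau(f)=\max(v_+,v_-)$, which is the assertion of the theorem; the finiteness claimed in the statement is obtained en route, as soon as the upper bound is established.
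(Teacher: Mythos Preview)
The paper does not give a proof of this statement: Theorem~\ref{Krein} is quoted in the preliminaries as a classical result of Krein, with pointers to \cite{MR1400006}, p.~115, and \cite{MR1303780}, Part~2, and no argument is supplied. There is therefore no in-paper proof to compare your attempt against.

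For what it is worth, your outline follows the standard route found in those references (Nevanlinna factorization in each half-plane, then a Phragm\'en--Lindel\"of argument to control the directions near the real axis). Two small remarks. First, the claim that the Poisson term is $O(\log y)$ along the imaginary axis is sharper than the obvious dominated-convergence estimate actually gives; what falls out directly is $o(y)$, and that is all you need. Second, the quadrant Phragm\'en--Lindel\"of step---which, as you rightly say, is where the real work lies---requires an a~priori growth bound of order strictly less than $2$ before it can be invoked, and you have not produced one; in the classical treatments this is handled either via Carleman's formula or by a separate estimate extracted from the bounded-type hypothesis. Since you explicitly defer that step to the cited sources, this is more an acknowledged omission than a gap, but it is precisely the place where a self-contained proof has to do something nontrivial.
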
    
	
We now turn to de Branges spaces of entire functions and start with Hermite-Biehler, briefly HB functions. 
These are  entire functions $E : \mathbb{C} \to \mathbb{C}$, such that $\lvert E(z) \rvert > \lvert E(\bar{z}) \rvert$, for all $z \in \mathbb{C}_{+}$.

	 To any HB function $E$ we associate the de Branges $\mathcal{H}(E)$ consisting of entire functions $f$ satisfying
		\begin{enumerate}[(i)]
			\item $f/E \in H^{2}(\mathbb{C}_{+})$,
			\item $f^{\#}/E \in H^{2}(\mathbb{C}_{+})$,
		\end{enumerate}
		where, as usual, for entire functions $f$ we denote $f^{\#}(z) = \overline{f(\overline{z})}$. The norm on $\mathcal{H}(E)$ is the one inherited from $L^{2}(\lvert E(t) \rvert^{-2}dt)$,
		\begin{equation*}
		\norm{f}_{\mathcal{H}(E)} = \norm{f/E}_{2} = \left( \int_{-\infty}^{\infty} \lvert f(t)E(t)^{-1} \rvert^{2}dt \right)^{1/2}.
		\end{equation*}
For the theory of de Branges spaces we refer to de Branges' book \cite{MR0229011}. Other good references for the subject include \cite{MR3890099, https://doi.org/10.48550/arxiv.1408.6022}. From property $(i)$ we see that $\mathcal{H}(E)$ is a reproducing kernel Hilbert space where the kernel is given by
		\begin{equation*}
		k_{\lambda}^{E}(z) = \frac{E(z)\overline{E(\lambda)} - E^{\#}(z)\overline{E^{\#}(\lambda)}}{2\pi i(\bar{\lambda} - z)}.
		\end{equation*}
	Of course, de Branges spaces are characterized by kernels of this form. The most classical  examples are the Paley-Wiener spaces which are the Fourier transforms of $L^2(-a,a)$. In this particular case, $E(z) = e^{-iaz}$, $a > 0$. 
		
	It is important to note that de Branges  spaces can also be introduced in an axiomatic way (see \cite{MR0229011}).	
		\begin{theorem}\label{deB_axiomatic}
			Let $\mathcal{H}$ be a Hilbert space consisting of of entire functions satisfying
			\begin{enumerate}[(i)]
				\item For $w \in \mathbb{C} $ the linear functional $f \mapsto f(w)$ is bounded.
				\item For $\lambda \in \mathbb{C} \setminus \mathbb{R}$, such that $f(\lambda) = 0$ we have $\frac{z-\bar{\lambda}}{z-\lambda}f \in \mathcal{H}$ and $\norm{\frac{z-\bar{\lambda}}{z-\lambda}f} = \norm{f}$.
				\item $f^{\#}(z) = \bar{f}(\bar{z}) \in \mathcal{H}$, and $\norm{f} = \norm{f^{\#}}$.
			\end{enumerate}
			Then there exists a HB function $E$, such that $\mathcal{H} = \mathcal{H}(E)$ as Hilbert spaces.
		\end{theorem}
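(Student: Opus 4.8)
The plan is to recognize this as de Branges' axiomatic characterization and to reconstruct a Hermite--Biehler structure function directly from the reproducing kernel of $\mathcal{H}$; throughout I assume $\mathcal{H}\neq\{0\}$, the degenerate case being trivial. By axiom (i), every point evaluation $f\mapsto f(w)$ is a bounded functional, so by Riesz representation $\mathcal{H}$ is a reproducing kernel Hilbert space: there is $K:\mathbb{C}\times\mathbb{C}\to\mathbb{C}$, entire in the second variable, with $K_w:=K(w,\cdot)\in\mathcal{H}$ and $f(w)=\langle f,K_w\rangle$. Hermitian symmetry $K(w,u)=\overline{K(u,w)}$ is automatic. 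From axiom (iii) the map $f\mapsto f^{\#}$ is a conjugate-linear isometric involution $J$; since such a $J$ satisfies $\langle Jf,g\rangle=\langle Jg,f\rangle$, pairing against $K_w$ gives $f^{\#}(w)=\langle Jf,K_w\rangle=\overline{\langle f,JK_w\rangle}$, while also $f^{\#}(w)=\overline{f(\bar w)}=\overline{\langle f,K_{\bar w}\rangle}$, and comparing yields the conjugation symmetry $(K_w)^{\#}=K_{\bar w}$, equivalently $K(\bar w,\bar z)=\overline{K(w,z)}$.

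The crux is axiom (ii). For $\lambda\in\mathbb{C}\setminus\mathbb{R}$ the subspace $\{f:f(\lambda)=0\}$ is exactly $K_\lambda^{\perp}$, and (ii) states that multiplication by $b_\lambda(z)=\frac{z-\bar\lambda}{z-\lambda}$ maps $K_\lambda^{\perp}$ isometrically onto $K_{\bar\lambda}^{\perp}$ (note $b_\lambda(\bar\lambda)=0$). I would polarize this isometry against the projected kernels $P_\lambda K_w=K_w-\frac{K(w,\lambda)}{K(\lambda,\lambda)}K_\lambda$, which reproduce point evaluation at $w$ on $K_\lambda^{\perp}$, to obtain a functional equation linking $K_w$, $K_\lambda$ and $K_{\bar\lambda}$. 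The target of this computation is de Branges' fundamental identity: there exist real entire functions $A=A^{\#}$, $B=B^{\#}$, independent of $w$, with
\[ K(w,z)=\frac{B(z)\overline{A(w)}-A(z)\overline{B(w)}}{\pi(z-\bar w)}. \]
The real content here is that the entire function $(z-\bar w)K(w,z)$ separates variables through only two fixed entire functions; the apparent singularity at $z=\bar w$ is removable precisely because the numerator vanishes there.

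To realize $A,B$ I would fix a base point $w_0$ (real, with $K(w_0,w_0)\neq0$, available since $\mathcal{H}\neq\{0\}$) and extract $A$ and $B$ as suitable symmetric and antisymmetric combinations of $z\mapsto(z-\bar w_0)K(w_0,z)$ together with a second, linearly independent kernel section, normalized so that $A^{\#}=A$ and $B^{\#}=B$; the conjugation symmetry of the kernel from the first paragraph is what guarantees that these combinations are genuinely \emph{real} entire functions. Setting $E=A-iB$, so that $E^{\#}=A+iB$, a direct substitution (exactly the algebra already displayed in the explicit form of $k_\lambda^{E}$ recorded before the statement) shows that $k^{E}$ coincides with $K$, i.e.\ $\mathcal{H}$ and $\mathcal{H}(E)$ share the same reproducing kernel.

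Finally, positivity of the kernel forces the Hermite--Biehler inequality: for $z\in\mathbb{C}_{+}$ one has $K(z,z)=\frac{|E(z)|^{2}-|E^{\#}(z)|^{2}}{4\pi\,\mathrm{Im}\,z}=\norm{K_z}^{2}>0$, so $|E(z)|>|E^{\#}(z)|$ and $E$ is HB. Two reproducing kernel Hilbert spaces of functions with identical reproducing kernels coincide as sets and carry identical norms, whence $\mathcal{H}=\mathcal{H}(E)$ isometrically. I expect the main obstacle to be the second paragraph: extracting the closed-form kernel from the bare isometry in (ii), and the attendant task of excluding degeneracies (common real zeros of $A$ and $B$, or real zeros of $E$, or points where $K_z=0$) so that $E$ is a bona fide regular HB function and the inequality above is strict. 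This non-degeneracy bookkeeping, rather than any single clever identity, is the technical heart of the result.
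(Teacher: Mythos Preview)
The paper does not prove this theorem; it is quoted as a known result from de Branges' book \cite{MR0229011} and stated without proof in the Preliminaries. So there is no ``paper's own proof'' to compare against directly. That said, the paper does prove the closely related Theorem~\ref{theorem:near_invariance} (which drops axiom (iii)), and the method there is the natural benchmark.

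Your overall strategy --- reproducing kernel, conjugation symmetry $(K_w)^{\#}=K_{\bar w}$ from (iii), then a two-function representation of the kernel leading to $E=A-iB$ and the HB inequality from $K(z,z)>0$ --- is the right architecture and matches both de Branges' original argument and the paper's treatment of the nearby result. Where your sketch is genuinely incomplete is exactly where you flag it: you do not actually extract the kernel identity from axiom (ii). In the paper's proof of Theorem~\ref{theorem:near_invariance} this is done with explicit base points $\pm i$, setting
\[
F(z)=\sqrt{\pi}\,k_i(z)\,k_i(i)^{-1/2}(z+i),\qquad
G(z)=\sqrt{\pi}\,k_{-i}(z)\,k_{-i}(-i)^{-1/2}(z-i),
\]
and then verifying directly that $k_\lambda(z)=\dfrac{F(z)\overline{F(\lambda)}-G(z)\overline{G(\lambda)}}{2\pi i(\bar\lambda-z)}$. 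Your proposal to anchor instead at a \emph{real} $w_0$ and then reach for ``a second, linearly independent kernel section'' is where the argument loses traction: axiom (ii) is vacuous at real $\lambda$, so a real base point gives you no leverage from the isometry, and you have not said what the second section is or why the combination separates variables. If you switch to conjugate non-real base points (as the paper does), the derivation becomes a concrete computation rather than an aspiration; axiom (iii) then enters only to upgrade the resulting pair $F,G$ to real $A,B$ (equivalently $G=F^{\#}$), which is the step that distinguishes the present theorem from Theorem~\ref{theorem:near_invariance}. Your non-degeneracy remarks at the end are apt, but note that the theorem as stated does not claim $E$ is zero-free on $\mathbb{R}$ or regular, so that bookkeeping is lighter than you suggest.
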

		A subspace, $X$ of $\mathcal{H}(E)$ is called a \emph{de Branges subspace} if it is a de Branges space with respect to the norm inherited from  $\mathcal{H}(E)$. Equivalently, in view of the previous theorem, $X$  satisfies (i)-(iii) in that statement, hence it has the form $\cH(F)$ for some HB function $F$.
		 
		When $E$ has no real zeros the de Branges subspaces of $\mathcal{H}(E)$ have the following remarkable ordering property due to de Branges (see \cite{MR3890099, https://doi.org/10.48550/arxiv.1408.6022, MR0229011}).
		\begin{theorem}\label{ordering_thm}
			Suppose $E$ is a HB function. If $\mathcal{H}(F)$ and $\mathcal{H}(W)$ are two de Branges subspaces of $\mathcal{H}(E)$, and the corresponding HB functions $F$ and $W$ have no real zeros, then either $\mathcal{H}(F) \subset \mathcal{H}(W)$ or $\mathcal{H}(W) \subset \mathcal{H}(F)$.
		\end{theorem}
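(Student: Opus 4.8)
The plan is to convert each set-theoretic inclusion into a statement about a single real number---the mean type of the quotient of the two structure functions---and then to read off the dichotomy from the trivial fact that a real number is either $\geq 0$ or $\leq 0$.

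First I would record the dictionary between isometric containment and divisibility. If $\mathcal{H}(F)$ is a de Branges subspace of $\mathcal{H}(E)$ and $F$ has no real zeros, then $E/F$ is of bounded type in $\mathbb{C}_{+}$ with nonnegative mean type there, and by the symmetry under $\#$ the reflected quotient $E^{\#}/F^{\#}$ is likewise of bounded type in $\mathbb{C}_{-}$; this is exactly the information packaged in the defining conditions $f/E, f^{\#}/E \in H^{2}(\mathbb{C}_{+})$, read off by letting $f$ range over the reproducing kernels of $\mathcal{H}(F)$. Conversely, and this is the substantive half, I would show that the mean-type condition is also sufficient: if $W/F$ is of bounded type in $\mathbb{C}_{+}$ with $\mathrm{m.t.}_{\mathbb{C}_{+}}(W/F) \geq 0$, then every $f \in \mathcal{H}(F)$ satisfies $f/W, f^{\#}/W \in H^{2}(\mathbb{C}_{+})$, that is $f \in \mathcal{H}(W)$, the norm being preserved because $\mathcal{H}(F)$ and $\mathcal{H}(W)$ both sit isometrically inside the common space $\mathcal{H}(E)$.

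Next I would exploit that $F$ and $W$ define subspaces of the same $\mathcal{H}(E)$. From the first part, $E/F$ and $E/W$ are both of bounded type in $\mathbb{C}_{+}$, hence so is their ratio $W/F = (E/F)/(E/W)$; applying the $\#$-symmetric statement in the lower half-plane, $W/F$ is of bounded type in both $\mathbb{C}_{\pm}$. By Krein's theorem (Theorem \ref{Krein}) it is then of finite exponential type and has a single well-defined mean type $t := \mathrm{m.t.}_{\mathbb{C}_{+}}(W/F) \in \mathbb{R}$, with $\mathrm{m.t.}_{\mathbb{C}_{+}}(F/W) = -t$. If $t \geq 0$, the sufficiency half gives $\mathcal{H}(F) \subseteq \mathcal{H}(W)$; if $t \leq 0$, the same applied to $F/W$ gives $\mathcal{H}(W) \subseteq \mathcal{H}(F)$. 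Since every real number falls into at least one of these two cases, the two spaces are comparable.

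The main obstacle is the sufficiency direction asserted above, that the sign of a single mean type forces containment, and in particular the borderline regime $t = 0$ where growth estimates alone are inconclusive. Here I would return to the axiomatic characterization (Theorem \ref{deB_axiomatic}): the intersection $\mathcal{H}(F) \cap \mathcal{H}(W)$, with the norm inherited from $\mathcal{H}(E)$, again satisfies (i)--(iii)---bounded point evaluations and $\#$-invariance are immediate, and the division axiom (ii) survives because $\frac{z-\bar{\lambda}}{z-\lambda}f$ remains in each of $\mathcal{H}(F)$ and $\mathcal{H}(W)$ separately---so the intersection is itself a de Branges space $\mathcal{H}(G)$. One then shows that $t = 0$ is incompatible with both inclusions $\mathcal{H}(G) \subsetneq \mathcal{H}(F)$ and $\mathcal{H}(G) \subsetneq \mathcal{H}(W)$ being proper, by using the difference-quotient invariance of de Branges spaces to manufacture from a pair of elements in $\mathcal{H}(F) \setminus \mathcal{H}(G)$ and $\mathcal{H}(W) \setminus \mathcal{H}(G)$ a new common element, contradicting the maximality of $\mathcal{H}(G) = \mathcal{H}(F) \cap \mathcal{H}(W)$. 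The delicate bookkeeping---tracking mean types, ensuring the auxiliary functions inherit the no-real-zeros hypothesis, and running the difference-quotient construction---is where I expect the real work to lie; the reduction to the comparison of the single quantity $t$ is comparatively soft.
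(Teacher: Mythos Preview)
The paper does not supply its own proof of this statement: Theorem~\ref{ordering_thm} is quoted as a classical result of de~Branges, with references to \cite{MR3890099, https://doi.org/10.48550/arxiv.1408.6022, MR0229011}. So there is nothing to compare against except the literature, where the proof is known to be one of the deeper results in de~Branges' theory.

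Your reduction to the mean type $t=\mathrm{m.t.}_{\mathbb C_+}(W/F)$ has a structural gap. Consider the case $\tau(E)=0$, which is not exotic---it occurs for every regular Schr\"odinger operator, as the paper itself exploits in \S\ref{some_applications} and Corollary~\ref{tau=0}. Then every element of $\cH(E)$ has exponential type zero, and by Krein's theorem so does every quotient $W/F$ built from subspaces of $\cH(E)$; hence $t=0$ for \emph{every} pair $F,W$. Your dichotomy ``$t\ge 0$ or $t\le 0$'' therefore carries no information whatsoever, yet $\text{Chain}(\cH(E))$ is typically an infinite totally ordered family in this regime. The entire content of the theorem lives in what you call the ``borderline'' case, and the outline you give there---form the intersection $\cH(G)=\cH(F)\cap\cH(W)$ and use difference quotients to derive a contradiction from both inclusions being proper---is only a statement of hope, not an argument. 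In particular you have not explained how to manufacture from $f\in\cH(F)\setminus\cH(G)$ and $g\in\cH(W)\setminus\cH(G)$ a new element of $\cH(F)\cap\cH(W)$ not already in $\cH(G)$; this is precisely where de~Branges' original proof does substantial work.

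Your ``sufficiency half'' is also unproved even when $t>0$. From $f/F\in H^2(\mathbb C_+)$ and $F/W$ of bounded type with nonpositive mean type you cannot conclude $f/W\in H^2(\mathbb C_+)$: $F/W$ need not be bounded (it may be a ratio of unbounded outer functions), and may acquire poles at zeros of $W$ in $\mathbb C_+$. The passage from a growth condition on $W/F$ to an isometric inclusion $\cH(F)\subset\cH(W)$ requires the structure theory of de~Branges spaces, not just mean-type bookkeeping.
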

		For a HB function $E$ with no real zeros we define $\text{Chain}(\mathcal{H}(E))$ to be the collection of de Branges subspaces, whose HB functions have no real zeros. In general we call a collection of de Branges spaces a de Branges chain if it is totally ordered with respect to isometric inclusion.
		
		A de Branges space $\mathcal{H}(E)$ is called \emph{regular} (or short) if it is invariant under the backward shift operator, $Lf(z) = z^{-1}(f(z)-f(0))$. This is equivalent to the condition
		\begin{equation*}
		\frac{1}{(z+i)E(z)} \in H^{2}(\mathbb{C}_{+}).
		\end{equation*}
		By extension we call $E$ a regular HB function.
		
		An important property of such spaces is the following result
	(\cite{https://doi.org/10.48550/arxiv.1408.6022},  Theorem 9,  see also \cite{MR3890099, MR0229011}).
		\begin{theorem}
			Let $\mathcal{H}(E)$ be a regular de Branges space. If $\mathcal{H}(F)$ is a de Branges subspace and $F$ has no real zeros, then $\mathcal{H}(F)$ is regular.
		\end{theorem}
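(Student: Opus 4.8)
The plan is to work on the model side and reduce regularity of $\cH(F)$ to the analytic criterion recorded in the preliminaries: $\cH(F)$ is regular if and only if $\frac{1}{(z+i)F}\in H^{2}(\mathbb{C}_{+})$. Since $E$ is regular we are given $\frac{1}{(z+i)E}\in H^{2}(\mathbb{C}_{+})$, and the factorisation
\begin{equation*}
\frac{1}{(z+i)F}=\frac{1}{(z+i)E}\cdot\frac{E}{F}
\end{equation*}
invites me to prove the desired membership by controlling the multiplier $E/F$. Observe first that $E$ and $F$, being Hermite--Biehler, have no zeros in the closed upper half-plane, and $F$ has no real zeros by hypothesis (a regular $E$ has none either); hence $\frac{1}{(z+i)F}$ is analytic in $\mathbb{C}_{+}$ with boundary values on $\mathbb{R}$.

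First I would extract the structure of the isometric inclusion $\cH(F)\subset\cH(E)$. Fixing any nonzero $f\in\cH(F)$, both $f/F$ and $f/E$ lie in $H^{2}(\mathbb{C}_{+})$, so $E/F=(f/F)/(f/E)$ and its reciprocal are of bounded type in $\mathbb{C}_{+}$; applying this to $f^{\#}$ gives bounded type in $\mathbb{C}_{-}$ as well. De Branges' structure theory for isometric inclusions shows $E/F$ has nonnegative mean type in $\mathbb{C}_{+}$, and since $F$ is Hermite--Biehler its own mean type in $\mathbb{C}_{+}$ is nonnegative. Writing $F=E\cdot(F/E)$ as a product of bounded-type functions shows $F$ is of bounded type in both half-planes, so by Krein's theorem (Theorem \ref{Krein}) $F$ is of finite exponential type (and one expects $\tau(F)\le\tau(E)$).

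The membership $\frac{1}{(z+i)F}\in H^{2}(\mathbb{C}_{+})$ I would then obtain in two steps, using the Smirnov characterisation $H^{2}=N^{+}\cap L^{2}(\mathbb{R})$. For the Smirnov-class step, the product is of bounded type with mean type $-\,\mathrm{m.t.}(F)\le 0$, and the nonnegative mean type of $E/F$ is absorbed by the exponential inner factor of the $H^{2}$ function $\frac{1}{(z+i)E}$ (the exponential factors combine to $e^{i\,\mathrm{m.t.}(F)z}$, which is inner precisely because $\mathrm{m.t.}(F)\ge 0$); as no inner factor survives in the denominator, $\frac{1}{(z+i)F}\in N^{+}(\mathbb{C}_{+})$. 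I expect the genuine obstacle to be the second step, the boundary bound $\int_{\mathbb{R}}\frac{dx}{(1+x^{2})\lvert F(x)\rvert^{2}}<\infty$. This cannot come from a pointwise comparison of $\lvert F\rvert$ and $\lvert E\rvert$, because $E/F$ need not be bounded on $\mathbb{R}$ (it may grow polynomially), so the finiteness rests on a cancellation between the growth of $E/F$ and the decay in $\frac{1}{(z+i)E}\in L^{2}(\mathbb{R})$. The plan is to make this precise by writing the integral as $\int_{\mathbb{R}}\lvert E/F\rvert^{2}\,d\mu$ against the finite measure $d\mu=\frac{dx}{(1+x^{2})\lvert E\rvert^{2}}$ and exploiting that $E/F$ is of finite exponential type (Theorem \ref{Krein}), so its type is attained off the real axis and its boundary growth is controlled; combined with the regularity of $E$ this yields the $L^{2}$ estimate, and Smirnov's theorem then gives $\frac{1}{(z+i)F}\in H^{2}(\mathbb{C}_{+})$, i.e. $\cH(F)$ is regular. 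A conceptually cleaner route, at the cost of invoking the canonical-systems correspondence of Section \ref{sec:prelim}, is to observe that a regular space corresponds to a Hamiltonian of finite length and that $\cH(F)$ corresponds to the restriction of that Hamiltonian to a subinterval, which again has finite length and hence produces a regular de Branges space.
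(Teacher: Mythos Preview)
The paper does not supply its own proof of this theorem; it is quoted as a known result with references to the literature. There is therefore no in-paper argument to compare your proposal against, and I evaluate it on its own merits.

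The decisive gap in your direct analytic route is the $L^{2}$ boundary estimate $\int_{\mathbb{R}}\frac{dx}{(1+x^{2})\lvert F(x)\rvert^{2}}<\infty$. Your justification---that $E/F$ is of finite exponential type ``so its type is attained off the real axis and its boundary growth is controlled''---is not valid. Cartwright-class functions of exponential type zero include all polynomials, which are unbounded on $\mathbb{R}$; finite exponential type simply places no useful pointwise restriction on real-axis growth. Nothing you have written prevents $\lvert E(x)/F(x)\rvert$ from growing fast enough that $\int_{\mathbb{R}}\lvert E/F\rvert^{2}\,d\mu$ diverges even though $d\mu=\frac{dx}{(1+x^{2})\lvert E(x)\rvert^{2}}$ is finite. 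The $N^{+}$ step is also incomplete: the assertion that ``no inner factor survives in the denominator'' amounts to saying that $F$ carries no singular inner part in $\mathbb{C}_{+}$, which you have not established---for $E$ this follows from regularity, but for $F$ it is part of what must be proved.

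Your final sentence is the correct approach and is essentially how the cited references argue: a regular $\cH(E)$ with $E(0)=1$ corresponds to a Hamiltonian on a finite interval, the chain $\text{Chain}(\cH(E))$ is parametrised by the regular points of that interval, and each $\cH(F)$ in the chain arises from the restriction of the same Hamiltonian to a finite subinterval, hence is again regular. If you want a self-contained proof, commit to that route (or, equivalently, to de Branges' transfer-matrix description of the isometric inclusion $\cH(F)\subset\cH(E)$), rather than the direct factorisation, which does not close without that structural input.
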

	It follows  that if $E$ is a regular HB function then it is of finite exponential type $\tau(E)$ which  equals its mean-type in $\mathbb{C}_{+}$ defined above.

	Regular HB functions are related  to canonical systems in the following way.
			 Let $\bH$ be a Hamiltonian as in \S \ref{sec:canonical_systems} and $\Theta = (\Theta_{+}, \Theta_{-})^{T}$ be the unique solution to the canonical system
        \begin{equation*}
            \Omega \partial_{t} X(t) = z\bH(t)X(t), \; t \in (0,\ell),
        \end{equation*}
        satisfying $\Theta(0) = (1,0)^{T}$. Then $E_{t}(z) = \Theta_{+}(t,z) + i \Theta_{-}(t,z)$ is a regular HB function for each $t \in (0,\ell]$, except in the degenerate case
        \begin{equation*}
            \bH(s) = \begin{pmatrix}
                0 & 0\\ 0 & 1
            \end{pmatrix}, \text{ for almost every } s \in (0,t).
        \end{equation*}
        In addition,
        \begin{equation*}
            \text{Chain}(\mathcal{H}(E_{t})) = \left\{ \mathcal{H}(E_{s}) : s \in (0, t], s \text{ is regular}\right\},
        \end{equation*}
        Conversely, it is a deep theorem due to de Branges that to every regular de Branges space $\mathcal{H}(E)$ with $E(0) = 1$ there exists a (essentially unique) Hamiltonian $\bH$ with constant trace equal to $1$ and an interval $(0,t]$, such that $E = E_{t}$ and
        \begin{equation*}
            \text{Chain}(\mathcal{H}(E)) = \left\{ \mathcal{H}(E_{s}) : s \in (0, t], s \text{ is regular} \right\}.
        \end{equation*}

		For each regular HB function $E$ with no real zeros, $E(0) = 1$, the type of $E$ may also be calculated from the associated canonical system using the following Krein-de Branges formula
		\begin{equation}\label{eq:krein_de_branges_formula_prelim}
		\tau(E) = \int_{0}^{\ell} \sqrt{\det{\bH(x)}} dx,
		\end{equation}
		see \cite{MR3890099, MR133456, https://doi.org/10.48550/arxiv.1408.6022, MR0229011}.
		\begin{remark}\label{remark:type}
			Any $f \in \mathcal{H}(E)$ satisfies $\tau(f) \leq \tau(E)$. In particular, if the exponential type of $E$ is zero then the exponential type of all elements in $\mathcal{H}(E)$ is zero.
		\end{remark}
	
This connection to canonical systems can be used to establish the existence of de Branges subspaces of a given type. We include a proof of the result for the sake of completeness. A variant can be found in de Branges' book \cite{MR0229011} as well.
		\begin{lemma}\label{lemma:subspace_of_given_type}
			Let $E$ be a regular HB function with $\tau(E) > 0$. Then for each $0 < h < \tau(E)$ there exists a HB function $F$ zero free on $\mathbb{R}$ with $\tau(F) = h$ and $\mathcal{H}(F)$ is a de Branges subspace of $\mathcal{H}(E)$.
			\begin{proof}
				We may suppose $E(0) = 1$. Let $\bH$ be a canonical system associated with $E$. Then
				\begin{equation*}
				\tau(E) = \int_{0}^{\ell} \sqrt{\det{\bH(x)}} dx.
				\end{equation*}
				The right-hand side is continuous in $\ell$ and therefore it is possible to find $\delta > 0$, such that
				\begin{equation*}
				\tau(E)-\epsilon = \int_{0}^{\ell-\delta} \sqrt{\det{\bH(x)}} dx.
				\end{equation*}
				It is also possible to choose $\delta$, such that $\ell-\delta$ does not belong to a singular interval, since $\det(\bH(x)) = 0$, almost everywhere on singular intervals. Then the canonical system $\bH$ on $(0,\ell-\delta)$ gives a de Branges subspace, $\mathcal{H}(E_{\epsilon})$ of $\mathcal{H}(E)$ with $\tau(E_{\epsilon}) = \tau(E) - \epsilon$.
			\end{proof}
		\end{lemma}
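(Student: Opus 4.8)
The plan is to produce $F$ as one of the functions $E_{s}$ arising from the canonical system attached to $E$, and to use the Krein--de Branges formula to tune its exponential type to the prescribed value $h$. First I would normalize so that $E(0)=1$. This is harmless: replacing $E$ by $E/E(0)$ rescales the norm of $\mathcal{H}(E)$ by a positive constant, and since the defining axioms (i)--(iii) of Theorem \ref{deB_axiomatic} are invariant under a global rescaling of the norm (point evaluations stay bounded and the norm equalities in (ii),(iii) are preserved), the collection of de Branges subspaces, together with the type and real-zero structure of their HB functions, is unaffected. With $E$ regular and $E(0)=1$, de Branges' representation theorem supplies an (essentially unique) Hamiltonian $\bH$ of constant trace $1$ on an interval $(0,\ell)$ with $E = E_{\ell}$ and
\[
\text{Chain}(\mathcal{H}(E)) = \{\mathcal{H}(E_{s}) : s \in (0,\ell],\ s \text{ regular}\}.
\]
Every member of this family is, by construction, a de Branges subspace of $\mathcal{H}(E)$ whose HB function $E_{s}$ has no real zeros, so it suffices to locate the correct parameter $s$.

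Next I would apply the Krein--de Branges formula \eqref{eq:krein_de_branges_formula_prelim} to the truncated Hamiltonian $\bH|_{(0,s)}$, which gives $\tau(E_{s}) = \Phi(s)$ where
\[
\Phi(s) = \int_{0}^{s} \sqrt{\det \bH(x)}\,dx.
\]
Since $\bH \geq 0$ has constant trace $1$, the integrand is nonnegative and bounded (indeed $\sqrt{\det \bH} \leq 1/2$), so $\Phi$ is continuous and nondecreasing on $[0,\ell]$ with $\Phi(0)=0$ and $\Phi(\ell) = \tau(E)$. The intermediate value theorem then produces a point $s_{0} \in (0,\ell)$ with $\Phi(s_{0}) = h$.

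The one delicate point — and the main obstacle — is that $s_{0}$ need not be a regular point, in which case $E_{s_{0}}$ would fail to belong to the chain. This is resolved by observing that $\det \bH = 0$ almost everywhere on singular intervals, so $\Phi$ is constant on each of them. Consequently, if $s_{0}$ is interior to a singular interval $(a,b)$, then $\Phi \equiv h$ on $[a,b]$, and I may replace $s_{0}$ by the left endpoint $a$; this endpoint is not interior to any singular interval (singular intervals being maximal), hence regular, and it satisfies $a>0$ since $\Phi(a)=h>0=\Phi(0)$ forces $a$ to lie strictly inside $(0,\ell)$. Taking $F = E_{s_{0}}$ with $s_{0}$ chosen regular, we obtain a regular HB function with no real zeros, $\tau(F) = \Phi(s_{0}) = h$, and $\mathcal{H}(F) = \mathcal{H}(E_{s_{0}})$ is a de Branges subspace of $\mathcal{H}(E)$, which completes the argument.
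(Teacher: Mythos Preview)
Your proof is correct and follows essentially the same approach as the paper: normalize to $E(0)=1$, invoke the canonical system representation and the Krein--de Branges formula, use continuity of $s\mapsto\int_0^s\sqrt{\det\bH}$ to hit the target type, and exploit $\det\bH=0$ a.e.\ on singular intervals to ensure the chosen parameter can be taken regular. Your handling of the singular-interval obstruction (sliding to the left endpoint and checking it is positive) is slightly more explicit than the paper's, but the underlying idea is identical.
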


Finally, we discuss the concept of \emph{associated functions} to a given de Branges space.
	\begin{definition}\label{def:associated}
		An entire function $S$ is said to be associated with a de Branges spaces $\mathcal{H}(E)$ if whenever $S(\lambda) \neq 0$ and $f \in \mathcal{H}(E)$, we have
		\begin{equation*}
			\frac{f(z)S(\lambda)-f(\lambda)S(z)}{z-\lambda} \in \mathcal{H}(E).
		\end{equation*}
	\end{definition}
	Examples of associated functions include $E$, $A = 2^{-1}(E + E^{\#})$, and $B = -i2^{-1}(E - E^{\#})$. An entire function $S$ is associate with the de Branges space $\mathcal{H}(E)$ if and only if $S/E$, $S^{\#}/E$ are of bounded type, non-positive mean-type in $\mathbb{C}_{+}$ and
	\begin{equation*}
		\int_{-\infty}^{\infty} \frac{1}{1+t^{2}} \frac{\lvert S(t) \rvert^{2}}{\lvert E(t) \rvert^{2}}dt < \infty.
	\end{equation*}
	This characterization can be reformulated as follows. 
	\begin{proposition}\label{exp-associated} The entire function $S$ is associated to $\cH(E)$ if and only if $S\in \cH((z+i)E)$. In particular, if  $E$ is regular then for $\beta\in \mathbb{R}$, the exponential function $z\mapsto e^{i\beta z}$ is associated to $\cH(E)$  if and only if $|\beta|\le\tau(E)$. 
	\end{proposition}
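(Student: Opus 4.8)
The plan is to prove the first equivalence by matching the defining conditions for $S\in\cH((z+i)E)$ with the characterization of associated functions recalled just above the statement, and then to obtain the exponential case by a short mean-type computation.

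First I would record that $G:=(z+i)E$ is again a Hermite--Biehler function (indeed regular, inheriting this from $E$): for $z\in\bC_+$ one has $|z+i|>|z-i|$ since $|z+i|^2-|z-i|^2=4\,\mathrm{Im}\,z>0$, and $|E(z)|>|E^\#(z)|$, so with $G^\#(z)=(z-i)E^\#(z)$ we get $|G(z)|>|G^\#(z)|$. Hence $\cH(G)$ is defined, its norm is $\norm{f}_{\cH(G)}^2=\int_{-\infty}^\infty \frac{|f(t)|^2}{(1+t^2)|E(t)|^2}\,dt$, which is exactly the integral appearing in the associated-function criterion, and by definition $S\in\cH(G)$ means precisely that $S/G$ and $S^\#/G$ both lie in $H^2(\bC_+)$.

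Next I would translate each of these two $H^2$-memberships into bounded-type/mean-type language. Since $(z+i)^{-1}$ is bounded and of mean type $0$, the function $S/G=(z+i)^{-1}(S/E)$ is of bounded type with nonpositive mean type in $\bC_+$ exactly when $S/E$ is, and similarly $S^\#/G$ versus $S^\#/E$; moreover $S/G,\,S^\#/G\in L^2(\mathbb{R})$ amounts to finiteness of the integral above (using $|S^\#(t)|=|S(t)|$ on $\mathbb{R}$, so the two integral conditions coincide). The one delicate point is that, in general, being of bounded type with nonpositive mean type and having square-integrable boundary values does \emph{not} force membership in $H^2(\bC_+)$. Here it does, because the denominator $(z+i)E$ is entire and zero-free in $\bC_+$, so it carries no Blaschke factor and no continuous singular inner factor, only its exponential (mean-type) factor; for such a ratio of entire bounded-type functions, lying in the Smirnov class $N^+(\bC_+)$ is equivalent to having nonpositive mean type, and together with square-integrability on $\mathbb{R}$ this is equivalent to $H^2(\bC_+)$. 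Granting this, the conditions defining $S\in\cH(G)$ become exactly ``$S/E,\,S^\#/E$ of bounded type with nonpositive mean type, plus the integral condition,'' which by the quoted criterion is precisely associatedness with $\cH(E)$.

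Finally, for the exponential case I would take $S(z)=e^{i\beta z}$ with $\beta\in\mathbb{R}$ and $E$ regular, so that $\tau(E)$ equals the mean type of $E$ in $\bC_+$. Because the mean type of $e^{i\beta z}$ in $\bC_+$ is $-\beta$, additivity of mean type gives $\text{m.t.}(S/E)=-\beta-\tau(E)$, while $S^\#=e^{-i\beta z}$ has mean type $\beta$, whence $\text{m.t.}(S^\#/E)=\beta-\tau(E)$; both are nonpositive exactly when $|\beta|\le\tau(E)$. The integral condition holds automatically, since $|e^{i\beta t}|=1$ on $\mathbb{R}$ and regularity of $E$ means $((z+i)E)^{-1}\in H^2(\bC_+)$, i.e. $\int \frac{dt}{(1+t^2)|E(t)|^2}<\infty$. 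This yields associatedness of $e^{i\beta z}$ iff $|\beta|\le\tau(E)$. The main obstacle I anticipate is precisely the $H^2$-membership step: justifying that ``bounded type $+$ nonpositive mean type $+$ $L^2$ boundary values'' suffices for the ratios at hand, which I expect to hinge exactly on the Hermite--Biehler property of $E$ (no zeros in $\bC_+$ and entireness, ruling out spurious inner factors), and I would make this the technical heart of the argument.
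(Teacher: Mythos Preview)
Your argument is correct. The paper itself does not supply a proof of this proposition: it is stated immediately after the classical characterization of associated functions (bounded type, nonpositive mean-type of $S/E$ and $S^{\#}/E$, together with the weighted $L^2$ condition) and introduced merely as a ``reformulation'', so there is nothing to compare against.

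Your identification of the only nontrivial point is accurate. The forward direction ($S\in\cH((z+i)E)\Rightarrow S$ associated) is automatic, since $H^2\subset N$ with nonpositive mean type. For the converse, the issue is exactly the passage from ``bounded type, nonpositive mean type, $L^2$ boundary values'' to $H^2$, and your justification is the right one: since $E$ is Hermite--Biehler it is zero-free in $\bC_+$, and both $S$ and $E$ (being entire and of bounded type in both half-planes, hence of exponential type by Krein's theorem) carry no singular inner factor in their canonical factorization in $\bC_+$. Thus $S/((z+i)E)$ lies in $N^+$ as soon as its mean type is nonpositive, and the Smirnov maximum principle gives $H^2$. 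The mean-type computation for $e^{i\beta z}$ is routine and your bookkeeping is correct: $\text{m.t.}(e^{i\beta z}/E)=-\beta-\tau(E)$ and $\text{m.t.}(e^{-i\beta z}/E)=\beta-\tau(E)$, both nonpositive exactly when $|\beta|\le\tau(E)$, with the integral condition supplied by regularity of $E$.
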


	\end{section}

	\begin{section}{The model}\label{sec:model}
		We now turn to the construction of the analytic model for an admissible pair $D$ and $V$. The idea is fairly straightforward and reminiscent from the  spectral theory of linear differential operators. It is based on the  simple observation given in the lemma below. The more subtle part is to identify the objects resulting from the constructions.

		Throughout in what follows we shall denote by $A$ the densely defined self-adjoint restriction of $D$ given in assumption (iii) and by $\Lambda=\sigma(A)$. Moreover, let $\phi_0$ a non-zero vector in $\ker D$. Recall that by assumption (ii) $\phi_0$ spans $\ker (D)$.
		\begin{lemma}\label{one-dim ker} For each $\lambda\in \bC,~\ker(D-\la I)$ is spanned by $$\phi_\la=(I-\la V)^{-1}\phi_0.$$ Moreover, the set
			$$\{u_\la=\frac1{\|\phi_\la\|}\phi_\la:~\la \in \Lambda\},$$
			is an orthonormal basis of $H$.
			
		\end{lemma}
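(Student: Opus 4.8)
The plan is to handle the two assertions in turn: first I would determine $\ker(D-\la I)$ for every $\la\in\bC$, and then I would feed this into the spectral theorem for the self-adjoint restriction $A$ from assumption (iii). The first ingredient is that $I-\la V$ is boundedly invertible on all of $H$ for every $\la$: since $V$ is quasi-nilpotent, $\sigma(\la V)=\la\sigma(V)=\{0\}$, so $1\notin\sigma(\la V)$ and $(I-\la V)^{-1}$ exists. In particular $\phi_\la=(I-\la V)^{-1}\phi_0$ is well defined and nonzero, because $\phi_0\neq 0$ and $(I-\la V)^{-1}$ is injective.

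For the inclusion $\phi_\la\in\ker(D-\la I)$ I would rewrite the defining identity as $\phi_\la=\phi_0+\la V\phi_\la$. Since $\phi_0\in\ker D\subset\mathscr{D}(D)$ and $V\phi_\la\in\text{Ran}(V)\subset\mathscr{D}(D)$ by admissibility~(v), the vector $\phi_\la$ lies in $\mathscr{D}(D)$; applying $D$ and using $D\phi_0=0$ together with $DV=I$ gives $D\phi_\la=\la\,DV\phi_\la=\la\phi_\la$. For the reverse inclusion I would take an arbitrary $y\in\ker(D-\la I)$ and set $z=y-\la Vy$. Again $Vy\in\mathscr{D}(D)$, so $z\in\mathscr{D}(D)$, and $Dz=Dy-\la DVy=\la y-\la y=0$; hence $z\in\ker D=\bC\phi_0$, i.e. $(I-\la V)y=c\phi_0$ for some scalar $c$, and applying $(I-\la V)^{-1}$ yields $y=c\phi_\la$. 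This shows $\ker(D-\la I)=\bC\phi_\la$ is exactly one-dimensional for every $\la$.

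For the orthonormal basis statement I would invoke assumption~(iii): $A$ is self-adjoint with compact resolvent, so $\Lambda=\sigma(A)$ consists of real eigenvalues of finite multiplicity accumulating only at infinity, and $H$ admits an orthonormal basis of eigenvectors of $A$. Because $A$ is a restriction of $D$, for each $\la\in\Lambda$ we have $\ker(A-\la I)\subset\ker(D-\la I)=\bC\phi_\la$; since $\la$ is genuinely an eigenvalue of $A$, this forces $\ker(A-\la I)=\bC\phi_\la$. Thus every eigenvalue of $A$ is simple, $\phi_\la\in\mathscr{D}(A)$ is a corresponding eigenvector, and normalizing gives $u_\la=\norm{\phi_\la}^{-1}\phi_\la$. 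Orthogonality of eigenvectors of a self-adjoint operator for distinct eigenvalues, together with the spectral theorem, then identifies $\{u_\la:\la\in\Lambda\}$ with the orthonormal eigenbasis of $H$.

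The computations here are genuinely short, so the only points demanding care are bookkeeping ones. The subtlety in the kernel computation is that $V$ is merely a right inverse of $D$ (with range inside $\mathscr{D}(D)$) and $D$ is not a two-sided inverse, so the converse direction must be routed through the auxiliary vector $z=y-\la Vy\in\ker D$ rather than by naively cancelling $V$ and $D$. The subtlety in the basis part is the identification $\ker(A-\la I)=\ker(D-\la I)$, which is what allows the abstract eigenbasis furnished by the compact resolvent to be expressed concretely through the explicit vectors $\phi_\la$; I expect this matching step to be the main thing to state carefully.
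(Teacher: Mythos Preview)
Your proof is correct and follows essentially the same route as the paper: the paper rewrites $\phi_\la=\phi_0+\la V\phi_\la$ to show $D\phi_\la=\la\phi_\la$, then observes $D(I-\la V)u=0$ for the converse, and finally appeals to the compact resolvent of $A$ and the spectral theorem---exactly your argument, with your version spelling out more carefully why $\phi_\la\in\mathscr{D}(D)$ and why each eigenvalue of $A$ is simple.
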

		
		\begin{proof} The statement is almost self-explanatory. We have that 
			$$\phi_\la=(I-\la V)^{-1}\phi_0=\phi_0+\la V
			(I-\la V)^{-1}\phi_0=\phi_0+\la V\phi_\la,$$ so that $\phi_\la \in \mathscr{D}(D)$ and $D\phi_\la=\la DV\phi_la=\la \phi_\la$, i.e. $\phi_\la\in \ker(D-\la I)$. Conversely, if $Du=\la u$, it follows that $D(I-\la V)u=0$, i.e. $u$ is a multiple of $\phi_\la$. The second part follows from the first, the fact that the resolvent of $A$ is compact, and the spectral theorem.
			
		\end{proof}
		With the lemma in hand we can proceed to construct our model.
        
        Let $\mathcal{H}$ be the space of entire functions
		\begin{equation}\label{model_space}
		\mathcal{H} = \left\{ \hat{f} \in \mathcal{O}(\mathbb{C}) : \hat{f}(\lambda) = \langle f, \phi_{\bar{\lambda}} \rangle_{H} \text{, for some } f \in H \right\}.
		\end{equation}
		The second part of  Lemma \ref{one-dim ker} shows that $\hat{f}$ is uniquely determined by $f$, as well as by  the values $\hat{f}(\la),~\la\in \Lambda$. In particular, we can define a norm on $\mathcal{H}$ by
		\begin{equation}\label{model_space_norm}
		\|\hat{f}\|_\mathcal{H}=\|f\|_H,\end{equation}
		which makes the map $\mathcal{W} : H \to \cH,~\cW f=\hat{f}$ unitary.

		The unitary map $\mathcal{W}$ can be seen as a generalized Fourier transform associated to the operator $D$. If $D = -id/dx$, then $\mathcal{W}$ is the classical (inverse) Fourier transform and if $D$ is a regular Schrödinger operator it is nothing but the classical Weyl-Titmarsch transform. 
		The usual gain from considering such analytic models is a much more tractable form of the operators involved in the process. 
		
		We shall relate the operators $D,V$ and  the self-adjoint restriction $A$ of $D$ acting on $H$, to the forward and backward shifts $M_z,L$ acting on $\cH$, More precisely, we consider the operator $M_z$  of multiplication by the independent variable acting on $\cH$ ($M_zf=zf$), with the maximal domain of definition
		$$\mD(M_z)=\{f\in \cH:~zf\in \cH\},$$ 
		and the backward shift $L$ on $\cH$ formally defined by 
		\begin{equation}\label{backward}
		Lg(z)=\frac{g(z)-g(0)}{z},\quad g\in \cH,\,z\in \bC.	
		\end{equation}
		\begin{proposition}\label{basic_op} With the above notations we have:\\
			(i) $\cW V^*\cW^{-1}=L$,\\	
			(ii) If  $x\in \mD(D^*)$ then $\cW x\in \mD(M_z)$ and $\cW D^*x=M_zWx$.\\
			(iii) $M_z$ is a closed symmetric operator with $$(M_z-\la I)\mD(M_z)=\{f\in \cH:~f(\la)=0\}, \quad \la\in \bC,$$
			
		\end{proposition}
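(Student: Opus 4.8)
The plan is to treat the three claims in turn, reducing each to the basic eigenvector identities from Lemma \ref{one-dim ker}, namely $\phi_\la = \phi_0 + \la V\phi_\la$ and $D\phi_\la = \la\phi_\la$, together with the fact that $\{u_\la : \la\in\Lambda\}$ is an orthonormal basis. For (i) I would compute the transform of $V^*f$ directly. For $\la\neq 0$ the identity $\phi_{\bar\la}=\phi_0+\bar\la V\phi_{\bar\la}$ gives $V\phi_{\bar\la}=\bar\la^{-1}(\phi_{\bar\la}-\phi_0)$, so that
$$\widehat{V^*f}(\la)=\langle V^*f,\phi_{\bar\la}\rangle=\langle f,V\phi_{\bar\la}\rangle=\frac1\la\big(\langle f,\phi_{\bar\la}\rangle-\langle f,\phi_0\rangle\big)=\frac{\hat f(\la)-\hat f(0)}{\la},$$
which is exactly $L\hat f(\la)$, the value at $\la=0$ following by analyticity; this proves $\cW V^*\cW^{-1}=L$. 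For (ii), since $\phi_{\bar\la}\in\mD(D)$ and $x\in\mD(D^*)$, the definition of the adjoint and $D\phi_{\bar\la}=\bar\la\phi_{\bar\la}$ give
$$\widehat{D^*x}(\la)=\langle D^*x,\phi_{\bar\la}\rangle=\langle x,D\phi_{\bar\la}\rangle=\la\langle x,\phi_{\bar\la}\rangle=\la\,\hat x(\la),$$
so $z\hat x=\widehat{D^*x}\in\cH$, whence $\hat x\in\mD(M_z)$ and $\cW D^*x=M_z\cW x$.

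Part (iii) carries the real content. Closedness is immediate from the reproducing kernel structure: the bound $\lvert\hat f(\la)\rvert\le\norm{\phi_{\bar\la}}\,\norm{\hat f}_{\cH}$ shows point evaluations are continuous, so $\cH$-convergence implies pointwise convergence; thus if $f_n\to f$ and $M_zf_n\to h$ in $\cH$, passing to pointwise limits gives $h(\la)=\la f(\la)$ for all $\la$, hence $zf=h\in\cH$ and $M_zf=h$. For symmetry my plan is to exhibit $M_z$ as a restriction of the self-adjoint operator $\cW A\cW^{-1}$. Given $\hat a\in\mD(M_z)$, write $z\hat a=\widehat{a'}$ with $a'\in H$; evaluating at the real points $\la\in\Lambda$ and dividing by $\norm{\phi_\la}$ gives $\langle a',u_\la\rangle=\la\langle a,u_\la\rangle$, and since $a'\in H$ this forces $\sum_\la\la^2\lvert\langle a,u_\la\rangle\rvert^2<\infty$, i.e. $a\in\mD(A)$ with $Aa=a'$. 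Hence $M_z\subseteq\cW A\cW^{-1}$, and as the latter is self-adjoint, $M_z$ is symmetric.

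It remains to prove the range identity. The inclusion $(M_z-\la)\mD(M_z)\subseteq\{f:f(\la)=0\}$ is clear, since $(z-\la)f$ vanishes at $\la$. For the reverse, given $g\in\cH$ with $g(\la)=0$, I want $f:=g/(z-\la)$ to lie in $\mD(M_z)$. The key device is the bounded \emph{generalized backward shift} $R_\la:=(I-\la L)^{-1}L$ on $\cH$, which is well defined because $L=\cW V^*\cW^{-1}$ is quasinilpotent, so $I-\la L$ is boundedly invertible for every $\la\in\bC$. A short computation (using $Lg=(g-g(0))/z$) verifies $(I-\la L)\big(\tfrac{g-g(\la)}{z-\la}\big)=Lg$, whence $R_\la g=\tfrac{g-g(\la)}{z-\la}\in\cH$; applying this to our $g$ with $g(\la)=0$ gives $f=R_\la g\in\cH$, and then $zf=g+\la f\in\cH$, so $f\in\mD(M_z)$ and $(M_z-\la)f=g$. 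This yields the opposite inclusion and completes (iii).

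The main obstacle I anticipate lies in part (iii), and it is conceptual rather than computational. The delicate point is that $M_z$ is a \emph{proper} restriction of $\cW A\cW^{-1}$: one must resist concluding $\mD(A)\subseteq\mD(M_z)$, because $\widehat{Aa}$ and $z\hat a$ agree only on the discrete set $\Lambda$, which is not a uniqueness set for entire functions, so $z\hat a$ need not belong to $\cH$. Only the implication ``$z\hat a\in\cH\Rightarrow a\in\mD(A)$'' holds, and this asymmetry is precisely what makes $M_z$ symmetric yet not self-adjoint, as the introduction indicates. The second subtlety is the surjectivity of $M_z-\la$ onto the functions vanishing at $\la$, where quasinilpotence of $V$ (equivalently of $L$) is essential: it is exactly what guarantees $(I-\la L)^{-1}$ exists for every $\la\in\bC$, allowing the division by $z-\la$ to remain inside $\cH$.
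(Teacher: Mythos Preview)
Your proof is correct. Parts (i), (ii), and the closedness argument in (iii) coincide with the paper's treatment essentially verbatim.

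The genuine divergence is in the proof of symmetry of $M_z$. You argue that $M_z\subseteq \cW A\cW^{-1}$: given $\hat a\in\mD(M_z)$ with $z\hat a=\widehat{a'}$, evaluation at the real spectrum $\Lambda$ together with the orthonormal basis $\{u_\la\}$ forces $a\in\mD(A)$ and $Aa=a'$, so $M_z$ inherits symmetry from the self-adjoint $\cW A\cW^{-1}$. The paper instead proves an auxiliary identity: if $x,y\perp\phi_0$ then $\langle V^*x,y\rangle=\langle x,V^*y\rangle$ (equivalently $\langle Lu,v\rangle=\langle u,Lv\rangle$ whenever $u(0)=v(0)=0$), established by expanding in the same eigenbasis, and then writes $f=Lu$, $g=Lv$ with $u=M_zf$, $v=M_zg$. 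Your route is more conceptual and immediately identifies the self-adjoint extension; the paper's route isolates a symmetry property of $L$ on $\{f:f(0)=0\}$ that has some independent interest but is not reused elsewhere.

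You also give a complete proof of the range identity $(M_z-\la I)\mD(M_z)=\{f\in\cH:f(\la)=0\}$ via the generalized backward shift $R_\la=(I-\la L)^{-1}L$ and the formula $R_\la g=(g-g(\la))/(z-\la)$. The paper's proof of Proposition~\ref{basic_op} omits this verification entirely, though the same formula for $R_\la$ appears later (in the proof of Theorem~\ref{theorem:near_invariance}) without derivation; your argument fills that gap.
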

		
		\begin{proof} 
			(i) If $x\in H, ~\la \in \bC$,
			\begin{align*}\cW V^*x(\la)&=\langle x, V\phi_{\overline{\la}} \rangle =\langle x, V(1-\overline{\la} V)^{-1}\phi_0 \rangle\\&
			=\frac1{\la}(\langle x, (1-\overline{\la} V)^{-1}\phi_0 \rangle -\langle x,\phi_0\rangle)=L\cW x(\la).		
			\end{align*}
			(ii)  If $x\in \mD(D^*)$ then
			$$\cW D^*x(\la)=\langle x,D\phi_{\overline{\la}}\rangle=
			\la \cW x(\la),\quad \la\in \bC,$$ which gives the assertion.\\
			(iii) If $(f_n)$ is a sequence in $\mD(M_z)$ with $f_n\to f,~M_zf_n\to g$ in $\cH$ we use the continuity of point evaluations on $\cH$ to obtain
			$$g(\la)=\lim_{n\to\infty}\la f_n(\la)=\la\lim_{n\to\infty} f_n(\la)=\la f(\la),$$
			which  shows that the graph of $M_z$ is closed. To see that $M_z$ is symmetric we use the following claim:
			
			{\it If $x,y\in H$ satisfy $\langle x,\phi_0\rangle=\langle y,\phi_0\rangle=0$ then $\langle V^*x,y\rangle=\langle x, V^*y\rangle$.} By (i), an equivalent formulation is: If $u,v\in \cH$ satisfy $u(0)=v(0)=0$ then \begin{equation}\label{M_symm}\langle Lu,v\rangle=\langle u, Lv\rangle \end{equation}
			With \eqref{M_symm} in hand, for $f,g\in \mD(M_z)$ write $f=Lu,~g=Lv$ with $u=M_zf,~v=M_zg$ and since $u(0)=v(0)=0$, an application of \eqref{M_symm} gives the symmetry of $M_z$.	
			
			To verify the claim, recall that $H$ has an orthonormal basis of eigenvectors of $A$ which have the  form
			$$e_\la=c_\la(I-\la V)^{-1}\phi_0,\quad \la\in \Lambda\subset \mathbb{R}.$$
			We have 
			$$(I-\la V)^{-1}\phi_0=\phi_0+\la V(I-\la V)^{-1}\phi_0,$$
			and from $\la\in \mathbb{R}$,  $\langle x,\phi_0\rangle=\langle y,\phi_0\rangle=0$ we obtain 
			$$
			\langle V^*x,y\rangle =\sum_{\la\in \Lambda}\langle x,Ve_\la\rangle\langle e_\la,y\rangle = \sum_{\la\in \Lambda\setminus\{0\}}\frac1{\la}\langle x,e_\la\rangle\la\langle V e_\la,y\rangle =\langle x,V^*y\rangle$$
			which gives the claim and concludes the proof.
		\end{proof}
		
		Another piece of information of crucial 
		importance  regards the structure of the underlying model space $\cH$.
		This is described in the main result of this section. Recall that the HB function $E$ is called regular if $[(z+i)E]^{-1}\in H^2(\bC_+)$.
		
		\begin{theorem}\label{theorem:space}
			There exists a regular HB function, $E$, with no real zeros, and a constant $\alpha \in \mathbb{R}$, such that $e^{i\alpha z}\mathcal{W}H = \mathcal{H}(E)$ as Hilbert spaces.
		\end{theorem}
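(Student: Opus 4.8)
\emph{Strategy and Axiom (i).} The plan is to verify the three axioms of Theorem \ref{deB_axiomatic} for the space $e^{i\alpha z}\cH$ (where $\cH=\cW H$, $\hat f=\cW f$) for a suitable real constant $\alpha$, and then to read off regularity and the absence of real zeros from the structure of the model. Axiom (i) is immediate and insensitive to the exponential factor: since $\hat f(\la)=\langle f,\phi_{\overline\la}\rangle_H$ we have $|\hat f(\la)|\le\norm{f}_H\norm{\phi_{\overline\la}}_H=\norm{\hat f}_\cH\norm{\phi_{\overline\la}}_H$, so point evaluations are bounded on $\cH$, hence on $e^{i\alpha z}\cH$. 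I also record for later use that $\cH$ is invariant under the backward shift $L$, because $L=\cW V^*\cW^{-1}$ by Proposition \ref{basic_op}(i) and $V^*$ maps $H$ into $H$; this will deliver regularity of the resulting $E$.

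\emph{Axiom (ii).} Fix $\la\in\bC\setminus\mathbb{R}$ and $f\in\cH$ with $f(\la)=0$. By Proposition \ref{basic_op}(iii) the function $g=f/(z-\la)$ lies in $\mD(M_z)$ and $f=(M_z-\la)g$, while $\frac{z-\overline\la}{z-\la}f=(M_z-\overline\la)g\in\cH$. Since $M_z$ is symmetric, $\langle M_zg,g\rangle$ is real, whence
\begin{equation*}
\norm{(M_z-\mu)g}^2=\norm{M_zg}^2-2\,\mathrm{Re}(\mu)\,\langle M_zg,g\rangle+|\mu|^2\norm{g}^2
\end{equation*}
depends on $\mu$ only through $\mathrm{Re}\,\mu$ and $|\mu|$. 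As $\mathrm{Re}\,\overline\la=\mathrm{Re}\,\la$ and $|\overline\la|=|\la|$, this gives $\norm{\frac{z-\overline\la}{z-\la}f}=\norm{f}$. Multiplication by the zero-free factor $e^{i\alpha z}$ commutes with $\frac{z-\overline\la}{z-\la}$ and preserves the model norm, so (ii) also holds for $e^{i\alpha z}\cH$.

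\emph{Axiom (iii) and the constant $\alpha$ --- the crux.} This is the main obstacle, and the reason the exponential factor is unavoidable: $\cH$ itself need not be closed under $f\mapsto f^\#$ (already for $D=-id/dx$, where $\cH$ is the Fourier image of $L^2(0,1)$, one needs the shift $\alpha=\tfrac12$). Two things must be established. First, every $\hat f\in\cH$ is of bounded type in both half-planes, so that by Theorem \ref{Krein} it has finite exponential type; here one exploits the representation $\hat f(\la)=\langle(I-\la V^*)^{-1}f,\phi_0\rangle$, the quasi-nilpotence of $V^*$, and the control on the nodes $\Lambda=\sigma(A)$ from the compact resolvent of $A$ (the real sampling identity $\sum_{t\in\Lambda}|\hat f(t)|^2\norm{\phi_t}^{-2}=\norm{f}^2$ being characteristic of de Branges-type spaces). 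Second, one must show that the conjugate space $\cH^\#=\{f^\#:f\in\cH\}$ is a \emph{single} exponential multiple of $\cH$, namely $\cH^\#=e^{2i\alpha z}\cH$ with matching isometry, for one real $\alpha$. The natural device is the conjugation $Cx=\sum_{\la\in\Lambda}\overline{\langle x,u_\la\rangle}\,u_\la$ coming from the real spectrum of $A$: a direct computation shows that $\widehat{Cg}$ and $(\hat g)^\#$ coincide for all $g$ precisely when the numbers $\langle V^k\phi_0,u_\mu\rangle$ $(k\ge0,\ \mu\in\Lambda)$ are all real, and their failure to be real is exactly what forces the shift. The content of the crux is that, because both $\cH$ and $\cH^\#$ consist of entire functions of finite exponential type and both satisfy (i)--(ii) for the same symmetric $M_z$ of deficiency $(1,1)$, any bounded-type multiplier intertwining them must be a unimodular-on-$\mathbb{R}$ exponential $e^{2i\alpha z}$. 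Granting $\cH^\#=e^{2i\alpha z}\cH$, the space $e^{i\alpha z}\cH$ is closed under $f\mapsto f^\#$ with equal norms, so axiom (iii) holds.

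\emph{Conclusion.} Theorem \ref{deB_axiomatic} then yields an HB function $E$ with $e^{i\alpha z}\cH=\cH(E)$ isometrically. The $L$-invariance recorded above, transported through the factor $e^{i\alpha z}$, shows $\cH(E)$ is backward-shift invariant, i.e. $E$ is regular; and the reality together with the simplicity of the spectrum $\Lambda$ (the one-dimensional kernels of $D-\la I$) allows $E$ to be chosen with no real zeros. I expect the bounded-type estimate and the identification of the single phase $\alpha$ to be the genuinely hard steps, whereas axioms (i) and (ii) are soft consequences of the model already built in Proposition \ref{basic_op}.
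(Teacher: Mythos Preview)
Your treatment of axioms (i) and (ii) is correct and essentially identical to the paper's: boundedness of point evaluations is immediate from the definition of $\cW$, and the norm equality in (ii) is exactly the Cayley-transform argument using the symmetry of $M_z$ from Proposition~\ref{basic_op}(iii).

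The genuine gap is in the crux, axiom (iii). You correctly diagnose that a single exponential correction is needed, but you have not supplied a mechanism that produces it. Two concrete problems: (a) your bounded-type claim for arbitrary $\hat f$ via the formula $\hat f(\la)=\langle(I-\la V^*)^{-1}f,\phi_0\rangle$ is not justified --- quasi-nilpotence of $V^*$ gives no growth control along rays, and the sampling identity on $\Lambda$ does not by itself force bounded type; (b) more seriously, you assert that a bounded-type multiplier intertwines $\cH$ and $\cH^\#$, but you give no reason such a multiplier exists at all. The conjugation $C$ you introduce intertwines them as abstract Hilbert spaces, not as multiplication. The paper bypasses both issues by \emph{never} trying to verify (iii) directly. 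Instead it proves a structure theorem (Theorem~\ref{theorem:near_invariance}) for spaces satisfying only (i) and (ii): from the reproducing kernel one extracts entire functions $F,G$ via $F=\sqrt{\pi}\,k_i^{\cN}(z)k_i^{\cN}(i)^{-1/2}(z+i)$ and the analogous formula at $-i$, shows $v=G^\#/F$ is zero-free with $vv^\#=1$, and sets $u=v^{1/2}$, $E=uF$; then $u\cN=\cH(E)$ isometrically by a kernel computation. The identification $u=e^{i\alpha z}$ is obtained \emph{afterwards}, not from bounded type of general elements, but by showing $u$ is associated to $\cH(E)$: the $L$-invariance of $\cN$ with $\sigma(L|\cN)=\{0\}$ translates, after conjugation by $u$, into $\frac{f(z)u(\la)-f(\la)u(z)}{z-\la}\in\cH(E)$ for all $f\in\cH(E)$, whence $u/E,\,u^\#/E\in(z+i)H^2(\bC_+)$, so $u$ is of bounded type and, being unimodular on $\mathbb{R}$, equals $e^{i\alpha z}$.

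Your regularity and no-real-zeros claims also need this route. ``Transporting $L$-invariance through $e^{i\alpha z}$'' is not literally correct: $L$-invariance of $\cH$ does \emph{not} formally imply $L$-invariance of $e^{i\alpha z}\cH$. In the paper, regularity of $E$ comes from $E^{-2}=(u/E)(u^\#/E)\in(z+i)^2H^1(\bC_+)$, a byproduct of the associated-function argument above; and the absence of real zeros of $E$ comes from the fact that $F$ and $G$ have no common zeros (since $k_\la^{\cN}\not\equiv 0$ for all $\la$) together with $|F(x)|=|G(x)|$ on $\mathbb{R}$, not from spectral simplicity of $A$.
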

		
		The result can be seen as a theorem related to the axiomatic definition of de Branges spaces of entire functions described in Theorem \ref{deB_axiomatic}. More precisely, Theorem \ref{theorem:space} can be deduced from the description of Hilbert spaces of entire functions which only satisfy the first two conditions characterizing de Branges spaces but not necessarily the third. 
		
		\begin{theorem}\label{theorem:near_invariance}
			Let $\mathcal{N}$ be a Hilbert space of entire functions without common zeros and satisfying
			\begin{enumerate}[(i)]
				\item For $\la \in \mathbb{C}$ the linear functional $f \mapsto f(\la)$ is bounded,
				\item For $\lambda \in \mathbb{C} \setminus \mathbb{R}$, such that $f(\lambda) = 0$ we have $\frac{z-\bar{\lambda}}{z-\lambda}f \in \mathcal{N}$ and $\norm{\frac{z-\bar{\lambda}}{z-\lambda}f} = \norm{f}$.
			\end{enumerate}
			Then there exists a zero free entire function $u$, satisfying $uu^{\#}\equiv 1$ and a HB function $E$ with no real zeros, such that
			\begin{equation*}
			u\mathcal{N} = \mathcal{H}(E),
			\end{equation*}
			and multiplication by $u$ is a unitary map from 
			$\mathcal{N}$ onto $\mathcal{H}(E)$. If, in addition, one of the following conditions holds
			\begin{enumerate}
				\item $\mathcal{N}$ is a subspace of a de Branges space $\mathcal{H}(W)$,
				\item $\mathcal{N}$ is invariant for the backward shift $L$ and $\sigma(L|\mathcal{N})=\{0\}$, 
			\end{enumerate}
			then $u(z) = e^{i\alpha z}$, for some $\alpha \in \mathbb{R}$ and all $z\in \bC$. Moreover, if the second assumption holds then $E$ is regular and $|\alpha|\le\tau(E)$.
		\end{theorem}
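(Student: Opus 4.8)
The plan is to prove the statement in three stages: the unconditional factorization $u\cN=\cH(E)$, then the identification $u=e^{i\alpha z}$ under either extra hypothesis, and finally the regularity of $E$ with the bound $|\alpha|\le\tau(E)$ under the second hypothesis. For the factorization, axiom (i) makes $\cN$ a reproducing kernel Hilbert space with kernel $K(z,w)$, entire in $z$ with $K(z,w)=\overline{K(w,z)}$. Axiom (ii) says that for $\la\in\bC\setminus\mathbb{R}$ multiplication by the Blaschke factor $\frac{z-\bar\la}{z-\la}$ (unimodular on $\mathbb{R}$) is a norm-preserving bijection of $\{f\in\cN:f(\la)=0\}$ onto $\{f\in\cN:f(\bar\la)=0\}$. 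Writing each of these subspaces' kernels as $K$ minus a rank-one projection and equating them through this bijection yields the de Branges kernel identity; this is exactly the computation underlying Theorem \ref{deB_axiomatic}, except that without axiom (iii) the resulting structure functions need not be real-entire. I would then show that the defect from (iii) is carried by a single zero-free entire factor $u$ with $uu^{\#}\equiv1$: dividing it out symmetrizes the space, so $u\cN$ satisfies (i)--(iii), Theorem \ref{deB_axiomatic} produces the HB function $E$ with $u\cN=\cH(E)$, and $|u|\equiv1$ on $\mathbb{R}$ makes $f\mapsto uf$ unitary; the absence of common zeros gives $E$ no real zeros.

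Before either hypothesis enters, note that $u$ zero-free with $uu^{\#}\equiv1$ forces $u=e^{ih}$ with $h=h^{\#}$ real-entire: writing $u=e^{g}$, we get $g+g^{\#}\equiv 2\pi i k$, and reality of $g+g^{\#}$ forces $k=0$, so $g^{\#}=-g$. Hence under either hypothesis the entire task is to prove $h$ affine. Once $u$ is shown to have finite exponential type, Hadamard factorization gives $u=e^{az+b}$, and $uu^{\#}\equiv1$ forces $a=i\alpha$, $b=i\beta$ with $\alpha,\beta\in\mathbb{R}$; absorbing the unimodular constant $e^{i\beta}$ into $E$ leaves $u=e^{i\alpha z}$.

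Under hypothesis 1, fix $0\neq f\in\cN$ and set $F=uf\in\cH(E)$. From $f\in\cH(W)$ we have $f/W,\,f^{\#}/W\in H^{2}(\bC_+)$, and from $F\in\cH(E)$ we have $F/E,\,F^{\#}/E\in H^{2}(\bC_+)$. Using $F^{\#}=f^{\#}/u$, the quotients $\tfrac{f/W}{F/E}=\tfrac{E}{uW}$ and $\tfrac{f^{\#}/W}{F^{\#}/E}=\tfrac{uE}{W}$ are both of bounded type in $\bC_+$, hence so is their ratio $u^{2}$; as $u$ is zero-free, $u$ itself is of bounded type in $\bC_+$. The conjugate computation gives bounded type in $\bC_-$, so by Krein's Theorem \ref{Krein} $u$ is of finite exponential type, and the previous paragraph concludes $u=e^{i\alpha z}$.

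Hypothesis 2 is the crux. Transferring $L$ through $f\mapsto uf$ gives, for $h\in\cH(E)$, the operator $\tilde L h:=uL(u^{-1}h)=\frac{h-h(0)s}{z}$ with $s=u/u(0)$ --- a rank-one modification of the backward shift in which the constant $1$ is replaced by $s$ --- and $\tilde L$ is quasinilpotent, being unitarily equivalent to $L|\cN$. A direct computation gives the resolvent $(I-\mu\tilde L)^{-1}h=\frac{zh(z)s(\mu)-\mu h(\mu)s(z)}{(z-\mu)s(\mu)}$, which by quasinilpotency lies in $\cH(E)$ and is entire in $\mu$ for all $\mu\in\bC$. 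The plan is to exploit that $s(z)=u(z)/u(0)$ sits inside the fixed de Branges space $\cH(E)$ for every $\mu$: this should force $u$ to be of finite exponential type (so the reduction above applies) and should force $\frac{s-1}{z}=\frac{u-u(0)}{u(0)z}$ into $\cH(E)$. The latter membership is, via the identity $L(uf)=uLf+f(0)\frac{u-u(0)}{z}$, exactly equivalent to $\cH(E)$ being $L$-invariant, i.e. to $E$ being regular. Finally, with $u=e^{i\alpha z}$ the function $\frac{e^{i\alpha z}-1}{z}\in\cH(E)$ has exponential type $|\alpha|$, so Remark \ref{remark:type} gives $|\alpha|\le\tau(E)$. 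I expect the genuine obstacle to be precisely the extraction of finite type and regularity from quasinilpotency here: abstract quasinilpotency only yields $\norm{\tilde L^{n}}^{1/n}\to0$, which does not control the growth of the resolvent in $\mu$, so the argument must read the analytic structure of $s$ off the explicit resolvent formula rather than from any soft spectral-radius estimate.
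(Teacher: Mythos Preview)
Your factorization and hypothesis-1 arguments are essentially correct and parallel to the paper's; the paper uses the specific reproducing kernels $k_{\pm i}^{\cN}$ rather than a generic $f\in\cN$, which makes $u^{2}$ explicit without having to cancel $f$, but your route works.

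The genuine gap, which you yourself flag, is in hypothesis 2. You have the right resolvent but stop one algebraic step short. From your formula for $(I-\mu\tilde L)^{-1}h$ compute instead
\[
\tilde L(I-\mu\tilde L)^{-1}h(z)=\frac{(I-\mu\tilde L)^{-1}h-h}{\mu}=\frac{h(z)u(\mu)-h(\mu)u(z)}{(z-\mu)\,u(\mu)}.
\]
Since this lies in $\cH(E)$ for every $h\in\cH(E)$ and every $\mu\in\bC$ (recall $u$ is zero-free), this says precisely that $u$ is \emph{associated} to $\cH(E)$ in the sense of Definition~\ref{def:associated}; by symmetry under $\#$, so is $u^{\#}$. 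The characterization preceding Proposition~\ref{exp-associated} then gives $u/E,\ u^{\#}/E\in(z+i)H^{2}(\bC_+)$, and multiplying these using $uu^{\#}=1$ yields
\[
E^{-2}=\frac{u}{E}\cdot\frac{u^{\#}}{E}\in(z+i)^{2}H^{1}(\bC_+),
\]
which is exactly the regularity of $E$. Now $E$ is of bounded type in $\bC_+$, hence so is $u=E\cdot(u/E)$, and your Hadamard reduction gives $u=e^{i\alpha z}$. The bound $|\alpha|\le\tau(E)$ follows immediately from $e^{i\alpha z}$ being associated to the regular space $\cH(E)$ via Proposition~\ref{exp-associated}; your alternative route through $\frac{e^{i\alpha z}-1}{z}\in\cH(E)$ would also work, but only once regularity is already in hand. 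The conceptual point you were circling is that quasinilpotency is not used as a growth estimate at all: it is used solely to guarantee the resolvent exists for \emph{every} $\mu\in\bC$, and the analytic content is then read off from the algebraic form of $\tilde L(I-\mu\tilde L)^{-1}$, which is the associated-function criterion in disguise.
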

		\begin{proof} By assumption (i), $\mathcal{N}$ has a reproducing kernel $k^\mathcal{N}$.
			By Theorem 4.1 in \cite{MR3004956}, or by  the proof of Theorem 23 in \cite{MR0229011} it follows that $k^{\mathcal{N}}$ has the form
			\begin{equation}\label{kernel_form}
			k^{\mathcal{N}}(z,\la)=	k^{\mathcal{N}}_{\lambda}(z) = \frac{F(z)\overline{F(\lambda)} - G(z)\overline{G(\lambda)}}{2\pi i(\bar{\lambda} - z)},
			\end{equation}
			where $F$ and $G$ are the entire functions given by
			\begin{equation}\label{kernel_structure}
			\begin{cases}
			F(z) = \sqrt{\pi}k_{i}^{\mathcal{N}}(z)k_{i}^{\mathcal{N}}(i)^{-1/2}(z+i) \\
			G(z) = \sqrt{\pi}k_{-i}^{\mathcal{N}}(z)k_{-i}^{\mathcal{N}}(-i)^{-1/2}(z-i)
			\end{cases}.
			\end{equation}
			Note that \eqref{kernel_form} together with the assumption that  $\mathcal{N}$ has no common zeros implies that $F$ and $G$ have no common zeros in $\bC$. Indeed, by \eqref{kernel_form}
			$F(\la)=G(\la)=0$ implies $k^{\mathcal{N}}_{\lambda} \equiv 0$ which contradicts this assumption on $\mathcal{N}$.\\
			Now set $\la=\overline{z}\in \bC$ in \eqref{kernel_form} and note that since the denominator in \eqref{kernel_form} vanishes, the numerator must vanish as well, i.e. 
			\begin{equation}\label{F_G_sharp}
			F(\bar{z})\overline{F(z)} = G(\bar{z})\overline{G(z)}\Leftrightarrow F^\#(z)F(z)=G^\#(z)G(z),\quad z\in \bC.
			\end{equation}
			In particular, neither $F$ nor $G$ can have a real zero $x$, otherwise from $|F(x)|^2=|G(x)|^2$ we  would conclude that $x$ is a common zero of these functions which cannot hold by the above argument. Moreover, from above if $F(\la)=0$, then $G(\la)\ne0$, hence by \eqref{F_G_sharp} it follows that $G^\#(\la)=0$  and the order of the zero $\la$ for $G^\#$ is at least as large at the order of  the zero $\la$ for $F$. Thus the function $v=G^\#/F$ is entire and similarly, $F^\#/G=1/v^\#$ is entire, i.e. $v$ has no zeros in $\bC$.
			Finally, from the equality $|F(x)|^2=|G(x)|^2,~x\in \mathbb{R}$ we see that $|v|=1$ on $\mathbb{R}$. Equivalently, $vv^\#=1$. 
			
			Set $u=v^{1/2}$ and $E=uF$. Then $E$ is zero-free on $\mathbb{R}$,  $E^\#=u^\#F^\#=\frac{F^\#}{u}$ and the inequality $|E(z)|>|E^\#(z)|, z\in \bC_+$ is equivalent to $$|u(z)|^2|F(z)|=|G^\#(z)|>|F^\#(z)|, \quad 
			z\in \bC_+,$$
			which follows from 
			$$0<|k_\la^\mathcal{N}(\la)|=\frac{|F(\la)|^2-|G(\la)|^2}{4\text{Im }\la},\quad \la\in \bC_-.$$
			Thus, $E$ is a HB function	and the equalities 
			$$u(z)\overline{u(\la)}k_\la^\mathcal{N}(z)=k_\la^E(z),\quad k_\la^\mathcal{N}(z)=\frac1{u(z)\overline{u(\la)}}k_\la^E(z),$$
			give that multiplication by $u$ is unitary.
			
			To prove the second part, we shall show that both conditions imply that $u$ is of bounded type in $\bC_+$. Since $|u|=1$ on the real line, it will  follow that $u(z)=e^{i\alpha z}$, for some real number $\alpha$ and all $z\in \bC$.\\
			1.  If $\mathcal{N}$ is a closed subspace of $\cH(W)$ then from the formulas defining $F,G$ in \eqref{kernel_structure} it follows that 
			\begin{equation*}
			u^2(z) = \frac{(k^{\mathcal{N}}_{-i})^{\#}(z)(k^{\mathcal{N}}_{-i}(-i))^{-1/2}}{k^{\mathcal{N}}_{i}(z)(k^{\mathcal{N}}_{i}(i))^{-1/2}}.
			\end{equation*}
			Since $k^{\mathcal{N}}_{i}$ and $k^{\mathcal{N}}_{-i}$ belong to $\mathcal{H}(W)$, it follows that $k^{\mathcal{N}}_{i}/W, (k^{\mathcal{N}}_{-i})^{\#}/W \in H^{2}(\mathbb{C}_{+})$ and so as a quotient of two $H^2-$functions, $u^2$ is  of bounded type and obviously so is $u$.\\
			2.  Assume that $\mathcal{N}$ is backward-shift invariant denote this operator by  $L$ and assume that $\sigma(L)=\{0\}$. A simple computation reveals that if $\lambda\in \bC$, the operator $L(1-\lambda L)^{-1}$ is given by 
			$$L(I-\la L)^{-1} h(z)=\frac{h(z)-h(\la)}{z-\la},\quad z\in \bC.$$
			If $u\mathcal{N}=\cH(E)$, $\la\in \bC$ and $f\in \cH(E)$, then from $uL(I-\la L)^{-1} u^\#f\in \cH(E)$ we obtain after a direct computation using $uu^\#=1$ that the function  $$z\to\frac{f(z)u(\la)-f(\lambda)u(z)}{z-\lambda}$$ belongs to  $\cH(E)$, that is,  $u$ is  associated to $\cH(E)$. By the definition of this space, it follows that   $u^\#$ is  associated to $\cH(E)$ as well. Thus $u/E,~u^\#/E\in (z+i)H^2(\bC_+)$  and since $uu^\#=1$, we have that 
			$$E^{-2}=\frac{u}{E}\frac{u^\#}{E}\in (z+i)^2H^1(\bC_+).$$
			Then $E$ is regular, in particular, of bounded type and consequently $u=E(u/E)$ is of bounded type in $\bC_+$.  Then $u(z)=e^{i\alpha z}$ and $|\alpha| \le \tau(E)$ because $u$ is associated to $\cH(E)$.
		\end{proof}

		\begin{proof}[Proof of Theorem \ref{theorem:space}]
			By Proposition \ref{basic_op} (i) the space $\mathcal{H} = \mathcal{W}H$ is $L-$invariant with $\sigma(L|\mathcal{H})=\sigma(V^*|H)=\{0\}$, hence by the previous theorem  it will suffice to verify that $\mathcal{H}$ satisfies the  statements:
			\begin{enumerate}[(i)]
				\item For $\la \in \mathbb{C}$ the linear functional $\la \mapsto f(\la)$ is bounded,
				\item For $f\in \cH$ and $\lambda \in \mathbb{C} \setminus \mathbb{R}$, such that $f(\lambda) = 0$ we have $\frac{z-\bar{\lambda}}{z-\lambda}f \in \mathcal{H}$ and $\norm{\frac{z-\bar{\lambda}}{z-\lambda}f} = \norm{f}$.
			\end{enumerate}
			(i) is obvious by  definition, since for $f=\cW x$
			$$|f(\la)|=|\langle x, \phi_{\overline{\la}}\rangle|\le \|x\|_H\|\phi_{\overline{\la}}\|_H=\|f\|_{\cH}\|\phi_{\overline{\la}}\|_H.$$
			For part (ii) the equality     \begin{equation*}
			L(I-\lambda L)^{-1}f(z) = \frac{f(z)-f(\lambda)}{z-\lambda},
			\end{equation*}
			and the identity
			\begin{equation*}
			\frac{z-\bar{\lambda}}{z-\lambda} = 1 + \frac{\lambda}{z-\lambda} - \frac{\bar{\lambda}}{z-\lambda},
			\end{equation*}
			show that $\frac{z-\bar{\lambda}}{z-\lambda}f \in \mathcal{H}$, when $f\in \cH$ and $\lambda \in \mathbb{C} \setminus \mathbb{R}$, such that $f(\lambda) = 0$. To verify the equality of norms, we use Proposition \ref{basic_op} (iii) and note that the equality is equivalent to the fact  that the Cayley transform of the symmetric operator $M_z$ is isometric.
		\end{proof}
		
		Let us briefly discuss the applications of Theorem \ref{theorem:space} to some of the situations considered in Section \ref{sec:examples}.
		
		\begin{examples}\label{ex_model} 1) {\rm \emph{Standard differentiation  and Volterra.}  Here   $H=L^2(a,b)$, $D = -i\frac{d}{dx}$ defined on the absolutely continuous functions in $[a,b]$ and 
				\begin{equation*}
				Vf(x) = i\int_{a}^{x} f(t)dt.
				\end{equation*}
				Since $\phi_z(t)=e^{i(t-a)z}$ a direct computation yields
				$$\cW f(z)=e^{-iaz}\int_a^bf(t)e^{itz}dt,\quad \cW H=e^{i\frac{b-a}{2}z}PW(-\frac{b-a}{2}, \frac{b-a}{2})=e^{i\frac{b-a}{2}z}\cH(E),$$ with $E(z)=e^{-i\frac{b-a}{2}z}$.\\
				2) \emph{ Regular Schr\"odinger operators.} In this case $D$ and $V$ are defined as in \S \ref{Schr_reg} The interesting fact here is that $\cW H$ is a de Branges space $\cH(E)$, in particular,  in the notation in Theorem \ref{theorem:space} we have $\alpha=0$.  Indeed, it follows easily that in this case we have \begin{equation}\label{sharp1}\phi_{\overline{z}}=\overline{\phi_z},\end{equation}
				and since $H=L^2(a,b)$ we obtain 
				\begin{equation}\label{sharp2}\cW^{-1}(\cW f)^\# =\overline{f},\quad f\in L^2(a,b),\end{equation}
				i.e. $g\to g^\#$ is an isometry on $\cW H$. Then the claim follows by the  above proof of Theorem \ref{theorem:space} and  Theorem \ref{deB_axiomatic}.\\   
				3) \emph{Canonical systems}. Here $D,V$ are described in \S \ref{sec:canonical_systems} with help of the Hamiltonian $\bH$. The same argument as above shows that 
				\eqref{sharp1} and \eqref{sharp2} continue to hold with obvious modifications, so that $\cW H=\cH(E)$ and $\alpha=0$ in Theorem \ref{theorem:space}.  
			}
		\end{examples}

		Even if assumption 1. in Theorem \ref{theorem:near_invariance} is not directly involved in the description of our model space $\cH=\cW H$, the condition is interesting in its own right and the corresponding conclusion has  useful applications related to the unicellularity of generalized Volterra operators as well as to the study of invariant subspaces of $D$.
		
		\begin{corollary}\label{subspace:deBranges}
			Let $\mathcal{N}$ be a Hilbert space of entire functions satisfying the assumptions (i) and (ii) in  Theorem \ref{theorem:near_invariance}. Then   $\mathcal{N}\subset	e^{i\delta z}\cH(G)$ for some $\delta\in \mathbb{R}$ and for some regular HB function $G$ if and only if $\cN=e^{i\gamma z}\cH(F)$ for some
			$\mathcal{H}(F)\in$ Chain$(\mathcal{H}(G))$ and $\gamma\in \mathbb{R}$ with 
			$$|\gamma-\delta|\le \tau(G)-\tau(F).$$	
			The de Branges space $\mathcal{H}(F)$ and $\gamma\in \mathbb{R}$ are uniquely determined by $\cN$.
		\end{corollary}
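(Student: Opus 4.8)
The plan is to reduce to the case $\delta=0$, then feed the reduced space into the existence part of Theorem \ref{theorem:near_invariance} to produce the de Branges space, and afterwards pin down that it sits in $\text{Chain}(\cH(G))$ with the correct exponential type. First I would note that multiplication by $e^{-i\delta z}$ is a zero-free unitary which preserves conditions (i) and (ii) (the nonreal near-invariance and boundedness of point evaluations are unaffected by a unimodular-on-$\mathbb{R}$ exponential factor), so $\cN':=e^{-i\delta z}\cN$ again satisfies the hypotheses of Theorem \ref{theorem:near_invariance}, and in the forward direction it is isometrically contained in the genuine de Branges space $\cH(G)$. Hence condition 1 of that theorem applies and gives $\cN'=e^{i\beta z}\cH(F)$ for a HB function $F$ with no real zeros and some $\beta\in\mathbb{R}$; replacing $\cN'$ by $(\cN')^\#=e^{-i\beta z}\cH(F)$ if necessary (both lie in the $\#$-invariant space $\cH(G)$), I may assume $\beta\ge 0$. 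It then remains to prove (a) $\cH(F)\in\text{Chain}(\cH(G))$ and (b) $|\beta|\le\tau(G)-\tau(F)$; the converse is to show these two conditions force the inclusion.

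The crux, and the step I expect to cause the most trouble, is (a): upgrading the containment of the \emph{shifted} space $e^{i\beta z}\cH(F)$ to an honest isometric inclusion $\cH(F)\subseteq\cH(G)$. Naively dividing out $e^{i\beta z}$ fails because $e^{-i\beta z}$ is unbounded in $\bC_+$, and one gets a mean-type estimate that is off by exactly $\beta$. The trick I would use is to exploit that \emph{both} $\cN'=e^{i\beta z}\cH(F)$ and its conjugate $(\cN')^\#=e^{-i\beta z}\cH(F)$ lie in $\cH(G)$: for $f\in\cH(F)$ the membership $e^{-i\beta z}f\in\cH(G)$ yields $e^{-i\beta z}f/G\in H^2(\bC_+)$, whence $\text{m.t.}(f/G)\le-\beta\le 0$ in $\bC_+$. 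Since $f/G$ is holomorphic and of bounded type in $\bC_+$ (as $G$ is HB, hence zero-free there) with $L^2(\mathbb{R})$ boundary values, the criterion that such a function with nonpositive mean type belongs to $H^2(\bC_+)$ forces $f/G\in H^2(\bC_+)$; the same argument applied to $f^\#$ gives $f^\#/G\in H^2(\bC_+)$, so $f\in\cH(G)$. A short boundary-norm computation (using $|e^{i\beta t}|=1$ on $\mathbb{R}$) shows the inclusion is isometric, and the preliminary result that a de Branges subspace of a regular space having no real zeros is itself regular then makes $F$ regular, so indeed $\cH(F)\in\text{Chain}(\cH(G))$.

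With regularity of $F$ now available, (b) is routine, and the circularity is genuinely avoided because the inclusion in (a) was obtained \emph{without} the type bound. The reproducing kernel $k^F_\lambda\in\cH(F)$ has mean type exactly $\tau(F)$ in both half-planes (here regularity of $F$ is used). Feeding $g=e^{i\beta z}k^F_\lambda\in\cH(G)$ into the two de Branges membership conditions, the $\#$-condition $g^\#/G\in H^2(\bC_+)$ gives $\beta+\tau(F)\le\tau(G)$, i.e. $|\beta|\le\tau(G)-\tau(F)$; setting $\gamma=\delta+\beta$ finishes the forward direction. For the converse I would run the mean-type bookkeeping in reverse: for $f\in\cH(F)\subseteq\cH(G)$ the factor $e^{i\beta z}$ is inner in $\bC_+$, so $e^{i\beta z}f/G\in H^2$ at once, while $\text{m.t.}(f^\#/G)\le\tau(F)-\tau(G)\le-\beta$ makes $f^\#/G$ divisible by the inner function $e^{i\beta z}$, giving $e^{-i\beta z}f^\#/G\in H^2$; hence $e^{i\beta z}f\in\cH(G)$ and $\cN\subseteq e^{i\delta z}\cH(G)$ isometrically.

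Finally, for uniqueness I would suppose $e^{i\gamma z}\cH(F)=e^{i\gamma'z}\cH(F')$, so $\cH(F)=e^{i\eta z}\cH(F')$ with $\eta=\gamma'-\gamma$. The $\#$-invariance of the genuine de Branges space $\cH(F)$ forces $e^{-2i\eta z}\cH(F')=\cH(F')$; if $\eta\ne 0$, iterating this identity produces elements $e^{2ik\eta z}f$ of arbitrarily large exponential type inside $\cH(F')\subseteq\cH(G)$, contradicting that every element of $\cH(G)$ has type at most $\tau(G)<\infty$ (Remark \ref{remark:type}). Thus $\eta=0$, giving $\gamma=\gamma'$ and $\cH(F)=\cH(F')$.
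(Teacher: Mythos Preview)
Your proof is correct and follows essentially the same route as the paper's: reduce to $\delta=0$, apply Theorem \ref{theorem:near_invariance} under hypothesis 1 to obtain $e^{-i\delta z}\cN=e^{i\beta z}\cH(F)$, then use $\#$-invariance of $\cH(G)$ to show that both $e^{i\beta z}f$ and $e^{-i\beta z}f$ lie in $\cH(G)$ for $f\in\cH(F)$, whence $f\in\cH(G)$; finally read off the type inequality. The paper is terser at step (a), writing only that the two memberships ``easily imply'' $f\in\cH(G)$, whereas you spell out the mean-type/$H^2$ criterion --- but the idea is identical.

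The only places where you deviate are minor technical choices. For (b) the paper factors $f/G=(f/F)(F/G)$ and uses that $\text{m.t.}_+(F/G)=\tau(F)-\tau(G)$ (mean type is additive on quotients of bounded-type functions), while you probe with a reproducing kernel $k^F_\lambda$; both lead to the same inequality, though your claim that $k^F_\lambda$ has mean type \emph{exactly} $\tau(F)$ in both half-planes would benefit from a word of justification (it holds for $\lambda=i$ via the formula $k^F_i=cF/(z+i)$ in one half-plane, and by $\#$ in the other). For uniqueness the paper simply reapplies the forward direction to the equality $e^{i\gamma z}\cH(F)=e^{i\gamma' z}\cH(F')$ (viewing one side as the ambient space) and invokes the resulting inequality $|\gamma-\gamma'|\le 0$; your iteration argument via unbounded exponential type is a valid alternative but slightly longer. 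Neither difference changes the substance of the proof.
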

	\begin{proof}
	If $\mathcal{N}\subset	e^{i\delta z}\cH(G)$ we apply 
	Theorem \ref{theorem:near_invariance} to $e^{-i\delta z}\cN$ to conclude that  $e^{-i\delta z}\cN=e^{i(\gamma-\delta) z}\cH(F)$,
	with $\gamma\in \mathbb{R}$ and $F$ a HB function. If 
	$f \in \mathcal{H}(F)$ we have $e^{i(\gamma-\delta) z}f\in \cH(G)$ and also
	$(e^{i(\gamma-\delta)z}f^{\#})^{\#}=e^{-i(\gamma-\delta)z}f\in \cH(G)$. This easily implies that $f\in \cH(G)$, that is, 
	$\mathcal{H}(F)\in$ Chain$(\mathcal{H}(G))$,  since $G$ is regular.  In this case $F\in \cH(z+i)G)$ and if $f\in \cH(F)$, from 
	$$e^{i(\gamma-\delta) z} \frac{f}{G}=e^{i(\gamma-\delta) z} \frac{f}{F}\frac{F}{G}$$
	we see that the functions $e^{i(\gamma-\delta) z} \frac{f}{G},~e^{i(-\gamma+\delta) z} \frac{f^\#}{G}$ belong to $H^2(\bC_+)$ if and only if the inequality in the statement holds. The converse is straightforward.	 Finally, for the uniqueness, use the above proof to conclude that $e^{i\gamma z}\cH(F)=e^{i\delta z}\cH(G)$ implies $\cH(F)=\cH(G)$, hence $\gamma=\delta$ from the inequality in the statement	
	\end{proof}

	\begin{remark}{\rm The first part of the above proof shows that the general form $\cN=e^{i\gamma z}\cH(F)$ of nearly invariant subspaces $\cN$ of $e^{i\delta z}\cH(G)$, without common zeros,
	remains true without the assumption that $G$ is a regular HB  function.}
	\end{remark}
	
		\end{section}
	
	\begin{section}{Generalized Volterra operators}\label{sec:new_deBranges_results}
		The aim of this section is to explore invariant subspaces of right inverses of the generalized differentiation operator $D$, especially to characterize those which are unicellular.		
		Let us begin with the observation that the construction of our model space $\cH$ depends essentially on the choice of the right inverse $V$ of the unbounded operator $D$ considered in this paper. Of  course, $D$ has a collection of right inverses which are all rank-one perturbations of the chosen $V$. They have the form
		\begin{equation}
		\label{right-inverses} V_x=V+\langle \cdot, x\rangle \phi_0,
		\end{equation}
		where $x\in H$ is arbitrary but fixed. All of these operators are compact, but they are not necessarily quasi-nilpotent, and not necessarily unicellular. For example, if $D$ is the classical differentiation operator with maximal domain in $L^2(a,b)$ and $$Vf(t)=\int_a^tf(s)ds,\quad f\in L^2(a,b),$$
		then $\sigma(V_x)=\{0\}$ if and only if $$V_xf(t)=\int_x^tf(s)ds,\quad f\in L^2(a,b),$$
		for some $c\in [a,b]$ and $V_x$ is unicellular if and only if $x=a$, or $x=b$.
		It turns out that an analogous result continues to hold in the very general context as well.  For  this purpose it will be more convenient to represent $V_x$ as operators acting on the model space. Since $$V_x^*=V^*+\langle \cdot, \phi_0\rangle x,$$
		we obtain from Proposition \ref{basic_op} (i)
		\begin{equation}\label{Vx}\cW V_x^*\cW^{-1}f(z)=\frac{f(z)-f(0)}{z}+f(0)\cW x(z)
		=\frac{f(z)-f(0)(1-z\cW x(z))}{z}.
		\end{equation}
		To establish the analogy to the standard example described above, we start by describing the quasi-nilpotent right inverses of $D$. 
		
		\begin{proposition}\label{quasi-nilpotent-ri}	
			The compact operator $V_x$ has spectrum $\{0\}$ if and only if there exists $\beta\in \mathbb{R}$ with $\lvert \beta +\alpha\rvert \leq \tau(E)$, such that $x=x_\beta$ with		
			$$\cW x_\beta(z)=\frac{1-e^{i\beta z}}{z}.$$
		\end{proposition}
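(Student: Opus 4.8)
The plan is to reduce the spectral question to a statement about the zeros of a single entire function, and then to decide which such data are compatible with membership in the model space $\cW H$.

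First I would locate the nonzero spectrum by a direct eigenvalue computation rather than through the transported operator \eqref{Vx}. Since $V_x=V+\langle\cdot,x\rangle\phi_0$ is a rank-one perturbation of the compact operator $V$, it is compact, so $\sigma(V_x)\setminus\{0\}$ consists of eigenvalues. If $V_x f=\mu f$ with $\mu\neq0$, then $f=\mu^{-1}V_xf\in\operatorname{Ran}(V)+\bC\phi_0\subset\mD(D)$, and applying $D$ together with $DV=I$, $D\phi_0=0$ gives $Df=\mu^{-1}f$; by Lemma \ref{one-dim ker} this forces $f\in\bC\phi_{1/\mu}$. Conversely, writing $\la=1/\mu$ and using $V\phi_\la=\la^{-1}(\phi_\la-\phi_0)$ from Lemma \ref{one-dim ker}, the relation $V_x\phi_\la=\la^{-1}\phi_\la$ reduces (since $\phi_\la,\phi_0$ are independent for $\la\neq0$) to the scalar equation $\langle\phi_\la,x\rangle=\la^{-1}$. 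Setting $g=\cW x$ and noting $\langle\phi_\la,x\rangle=\overline{g(\overline{\la})}=g^{\#}(\la)$, this reads $\la g^{\#}(\la)=1$. Thus the nonzero eigenvalues of $V_x$ are exactly the reciprocals of the zeros of $h^{\#}(z)=1-zg^{\#}(z)$, equivalently of $h(z)=1-zg(z)$, which satisfies $h(0)=1$. Consequently $\sigma(V_x)=\{0\}$ if and only if $h$ is zero-free on $\bC$.

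Second, I would determine which zero-free $h$ can occur. By Theorem \ref{theorem:space} we have $g\in\cW H=e^{-i\alpha z}\cH(E)$, and every element of $\cH(E)$ has exponential type at most $\tau(E)$ (Remark \ref{remark:type}), so $g$, and hence $h=1-zg$, has finite exponential type and therefore order at most one. A zero-free entire function of order at most one is $e^{bz+c}$ by Hadamard factorization, and $h(0)=1$ gives $h(z)=e^{bz}$, so $g=\frac{1-e^{bz}}{z}$. To see that $b=i\beta$ is purely imaginary I would exploit that every function in $\cH(E)$ is of bounded type in $\bC_+$: from $e^{i\alpha z}g\in\cH(E)$ and the bounded-type function $z\mapsto e^{i\alpha z}/z$ one deduces that $e^{(b+i\alpha)z}=e^{i\alpha z}/z-e^{i\alpha z}g$ is of bounded type in $\bC_+$, while $|e^{wt}|=e^{(\operatorname{Re}w)t}$ on $\bR$ shows that an exponential $e^{wz}$ is of bounded type in $\bC_+$ only when $\operatorname{Re}w=0$. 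Hence $\operatorname{Re}b=0$, and $g=s_\beta=\frac{1-e^{i\beta z}}{z}$ with $\beta\in\bR$.

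Third, I would pin down the admissible $\beta$ through the membership $s_\beta\in e^{-i\alpha z}\cH(E)$, i.e. $u:=\frac{e^{i\alpha z}-e^{i(\beta+\alpha)z}}{z}\in\cH(E)$. The two de Branges conditions require $u/E$ and $u^{\#}/E$ to lie in $H^2(\bC_+)$. The $L^2(\bR)$ requirement is automatic from regularity of $E$ (which gives $((z+i)E)^{-1}\in H^2(\bC_+)$) together with the $O(1/|t|)$ decay of the numerators, so the only genuine constraints come from the mean types in $\bC_+$: since $e^{i\gamma z}/E$ has mean type $-\gamma-\tau(E)$, the nonpositivity conditions for the exponents $\alpha,\beta+\alpha$ (from $u/E$) and $-\alpha,-(\beta+\alpha)$ (from $u^{\#}/E$) collapse to $|\alpha|\le\tau(E)$, which is automatic by Theorem \ref{theorem:space}, and $|\beta+\alpha|\le\tau(E)$. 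This settles both implications: when $|\beta+\alpha|\le\tau(E)$ the function $s_\beta$ lies in $\cW H$ and $h=1-zs_\beta=e^{i\beta z}$ is zero-free, so $V_{x_\beta}$ is quasi-nilpotent, and conversely quasi-nilpotency forces $g=s_\beta$ with this bound.

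The step I expect to be the main obstacle is precisely the passage from ``zero-free of finite exponential type'' to ``purely imaginary exponential with the sharp bound $|\beta+\alpha|\le\tau(E)$''. One must simultaneously rule out a nonzero $\operatorname{Re}b$ and extract the exact constraint rather than a weaker type inequality; both rest on the bounded-type/mean-type machinery, essentially the Cartwright-class behaviour encoded in Krein's Theorem \ref{Krein} and the characterization of associated exponentials in Proposition \ref{exp-associated}, rather than on crude growth estimates, since the bound $\tau(f)\le\tau(E)$ by itself would still permit a nonzero real part of $b$.
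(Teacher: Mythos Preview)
Your proposal is correct and follows essentially the same strategy as the paper: reduce quasi-nilpotency to the entire function $h(z)=1-z\,\cW x(z)$ being zero-free, invoke Hadamard to get $h(z)=e^{bz}$, and then use the bounded-type/mean-type structure of $\cW H=e^{-i\alpha z}\cH(E)$ to force $b=i\beta$ with $|\beta+\alpha|\le\tau(E)$. The differences are cosmetic: you locate the nonzero spectrum of $V_x$ by applying $D$ to the eigenvalue equation and identifying eigenvectors with $\phi_{1/\mu}$, whereas the paper works with $V_x^*$ on the model side via \eqref{Vx}; you cite Remark~\ref{remark:type} for finite exponential type, while the paper appeals to Krein's Theorem~\ref{Krein}. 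Both routes yield the same zero-free condition and the same mean-type analysis.

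One small algebraic slip to fix: the displayed identity ``$e^{(b+i\alpha)z}=e^{i\alpha z}/z-e^{i\alpha z}g$'' is off by a factor of $z$. From $g=\frac{1-e^{bz}}{z}$ you get $e^{(b+i\alpha)z}=e^{i\alpha z}-z\,e^{i\alpha z}g$, or equivalently $\frac{e^{(b+i\alpha)z}}{z}=\frac{e^{i\alpha z}}{z}-e^{i\alpha z}g$; either form shows $e^{(b+i\alpha)z}$ is of bounded type in $\bC_+$, and your conclusion $\operatorname{Re}b=0$ then follows as you indicate. For the sharp bound, your mean-type argument gives sufficiency immediately; for necessity it helps to isolate the single exponential via $\frac{e^{\pm i(\beta+\alpha)z}}{zE}=\frac{e^{\pm i\alpha z}}{zE}\mp\frac{u^{(\#)}}{E}$ and read off the mean type directly, which is exactly what the paper does when it writes $\frac{e^{(a+i\alpha)z}}{E},\,\frac{e^{(\bar a-i\alpha)z}}{E}\in(z+i)H^2(\bC_+)$.
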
	
		\begin{proof} Since $V_x$ is compact, it is quasi-nilpotent if and only if $V_x^*$ has no nonzero eigenvalues. A straightforward computation based on \eqref{Vx} shows that  an eigenvector $y\in H$ of $V_x^*$ corresponding to the eigenvalue $\la\in \bC$ satisfies
			$$f(z)=\cW y(z)=f(0)\frac{1-z\cW x(z)}{1-\la z},\quad z\in \bC.$$
			Thus, $\sigma(V_x^*)=\{0\}$ if and only if the entire function $1-z\cW x(z)$ has no zeros in $\bC$. Using Theorem \ref{theorem:space} together with Theorem \ref{Krein} we conclude that the function $g(z)=1-z\cW x(z)$  is of exponential type, hence
			$$g(z)=e^{az},\quad z\in \bC,$$
			for some fixed constant $a\in \bC$. Since $z\to e^{i\alpha z} \cW V_x^*\cW^{-1}f(z), z\to e^{i\alpha z}f(z)$ belong to $\cH(E)$ it follows easily that $$z\to \frac{e^{i\alpha z}g(z)}{E(z)}=\frac{e^{(a+i\alpha)z}}{E(z)} \in (z+i)H^2(\bC_+).\,\, z\to \frac{e^{-i\alpha z}g^\#(z)}{E(z)}=\frac{e^{(\bar{a}-i\alpha)z}}{E(z)} \in (z+i)H^2(\bC_+).$$ Since $E$ is regular, these can hold if and only if   $a=i\beta,~\beta\in \mathbb{R}$ and $\lvert \beta +\alpha\rvert \leq \tau(E)$, where the last inequality follows by comparing the mean-types of the functions involved.
		\end{proof}	
 For simplicity we shall denote the quasi-nilpotent left inverses of $D$ described in Proposition \ref{quasi-nilpotent-ri} by $V_\beta,~\beta\in \mathbb{R},~|\beta+\alpha| \le \tau(E)$. Let us now turn to the unicellularity of these operators.  Our main tool is Corollary \ref{subspace:deBranges} which immediately provides a complete characterization of the invariant subspaces of $\cW V_\beta^*\cW^{-1}$. 
 
 \begin{proof}[Proof of Theorem \ref{thm:1'}]  $V_\beta$ is unicellular if and only if $L_\beta=\cW V_\beta^*\cW^{-1}$ is 
 unicellular. Since $L_\beta$ is quasi-nilpotent, every nonzero closed invariant subspace $\cN$ is also invariant for $(I-\la L_\beta)^{-1}L_\beta$ given by $$(I-\la L_\beta)^{-1}L_\beta f(z)=\frac{f(z)-f(\la)e^{i\beta(z-\la)}}{z-\la},\quad z,\la\in \bC,$$
 which shows that such a subspace $\cN$ is nearly invariant without common zeros. According to Corollary \ref{subspace:deBranges}
 $$\cN=e^{i\gamma z}\cH(F),\quad \gamma\in \mathbb{R},\, \cH(F) \in \text{Chain}(\cH(E)).$$
 These subspaces are characterized by the inequalities  in  Corollary \ref{subspace:deBranges} and in Proposition \ref{exp-associated}, since $L_\beta-$invariance is equivalent to the fact that $e^{i(\beta-\gamma)z}$ is associated to $\cH(F)$. The inequalities read
 \begin{equation}\label{v_beta_star_inv}
 |\gamma+\alpha|\le \tau(E)-\tau(F),\quad 	|\beta-\gamma|\le \tau(F).
 \end{equation} 
 For example, if $\beta=\beta_{+} = \tau(E) - \alpha$,  we see that $|\gamma+\alpha|\le \tau(E)$,  and the second inequality gives  $\gamma+\alpha\ge \tau(E)-\tau(F)$, that is, \eqref{v_beta_star_inv} is equivalent to 
 $$\gamma+\alpha=\tau(E)-\tau(F).$$ Thus if $\cN=e^{i\gamma z}\cH(F),~\cK=e^{i\delta z}\cH(G)$ are $L_{\beta_+}-$invariant with $\cH(F), \cH(G) \in \text{Chain}(\cH(E))$, say  $\mathcal{H}(F)\subset \mathcal{H}(G)$, then
 $$|\gamma-\delta|=\tau(G)-\tau(F),$$
 and another application of  Corollary \ref{subspace:deBranges} gives $\cN\subset\cK$, that is, $L_{\beta_+}$ is unicellular. Similarly, it follows that 
 $L_{\beta_-}$ is unicellular.
 
 If $\beta_-<\beta_+$, note first that $\tau(E)$ must be positive. Now choose $\beta\in (\beta_-,\beta_+)$ and use Lemma \ref{lemma:subspace_of_given_type}  to find $\cH(F),\cH(G)\in \text{Chain}(\cH(E))$ such that $$\tau(F)=\frac{\beta-\beta_-}{2},\quad \tau(G)=\frac{\beta_+-\beta}{2},$$
 and choose $\delta=\frac{\beta_++\beta}{2},~\gamma=\frac{\beta_-+\beta}{2}$. Then from $$\beta-\gamma=\gamma-\beta_-=\frac{\beta-\beta_-}{2}=\tau(F),\quad
 \delta-\beta=\beta_+-\delta=\frac{\beta_+-\beta}{2}=\tau(G),$$
 we deduce
 $$\beta-\gamma=\tau(F),\,|\gamma+\alpha|=\tau(E)-\tau(F),\quad \delta-\beta=\tau(G),\,|\delta+\alpha|=\tau(G),$$
 which shows that both spaces $\cN=e^{i\gamma z}\cH(F),~\cK=e^{i\delta z}\cH(G)$ are contained in $\cW H$ and invariant for $V_\beta$. On the other hand, 
 $$\delta-\gamma=\tau(F)+\tau(G)>|\tau(F)-\tau(G)|,$$ hence, again 
 by  Corollary \ref{subspace:deBranges} we see that  
 neither subspace is contained in the other subspace, that is $V_\beta$ is not unicellular. 
		\end{proof}
	\begin{corollary}\label{thm:1}
		Let $D$ and $V$ be an admissible pair. Then $V$ is unicellular if and only if $\tau(E) = \lvert \alpha\rvert$.
	\end{corollary}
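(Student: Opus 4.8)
The plan is to deduce this directly from Theorem \ref{thm:1'}, since the corollary is essentially the special case of the unicellularity classification applied to the distinguished right inverse $V$ itself. First I would locate $V$ inside the one-parameter family $\{V_\beta : |\beta+\alpha|\le\tau(E)\}$ of quasi-nilpotent right inverses of $D$. As $V = V_x$ with $x = 0$, formula \eqref{Vx} reduces $\cW V^*\cW^{-1}$ to the plain backward shift $L$, which matches the normalization $\cW x_\beta(z) = (1-e^{i\beta z})/z$ of Proposition \ref{quasi-nilpotent-ri} precisely when $\beta = 0$. Hence $V = V_0$, and the admissibility bound $|\alpha|\le\tau(E)$ from Theorem \ref{theorem:near_invariance} confirms that $\beta = 0$ is an admissible parameter.

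Next I would invoke Theorem \ref{thm:1'}, which asserts that the only unicellular quasi-nilpotent right inverses of $D$ are $V_{\beta_+}$ and $V_{\beta_-}$, with $\beta_+ = \tau(E)-\alpha$ and $\beta_- = -(\tau(E)+\alpha)$. Because the correspondence $\beta\mapsto V_\beta$ is injective (distinct values of $\beta$ yield distinct functions $(1-e^{i\beta z})/z$, hence distinct vectors $x_\beta$ and distinct operators), the operator $V = V_0$ is unicellular if and only if the parameter value $0$ coincides with one of $\beta_+,\beta_-$. Unwinding this, $\beta_+ = 0$ reads $\alpha = \tau(E)$ and $\beta_- = 0$ reads $\alpha = -\tau(E)$; since $\tau(E)\ge 0$, the two conditions together amount exactly to $|\alpha| = \tau(E)$, which is the claimed equivalence.

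Finally I would record the degenerate case for consistency: by Theorem \ref{thm:1'} one has $\beta_+ = \beta_-$ precisely when $\tau(E) = \alpha = 0$, and there $V = V_+ = V_-$ is the unique unicellular right inverse, while the identity $|\alpha| = \tau(E) = 0$ indeed holds. There is no genuine obstacle in this argument beyond the bookkeeping of identifying $V$ with $V_0$; the entire substance lives in Theorem \ref{thm:1'}. The one point deserving a moment's care is the injectivity of $\beta\mapsto V_\beta$, since this is exactly what forces the admissible value $\beta = 0$ to \emph{equal} one of $\beta_\pm$, rather than merely to lie somewhere in the admissible range $|\beta+\alpha|\le\tau(E)$ alongside them.
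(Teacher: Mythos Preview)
Your argument is correct and is precisely the approach the paper intends: the corollary is stated immediately after Theorem~\ref{thm:1'} with no separate proof, being an evident specialization to $V=V_0$. Your identification of $V$ with $V_0$ via Proposition~\ref{basic_op}(i) and Proposition~\ref{quasi-nilpotent-ri}, followed by reading off when $0\in\{\beta_+,\beta_-\}$, is exactly the intended unpacking, and your remark on the injectivity of $\beta\mapsto V_\beta$ correctly isolates the only point requiring care.
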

A direct application of the proof gives the general form of $V_\beta-$invariant subspaces already mentioned in the Introduction.
\begin{corollary}\label{v_beta_inv} If $\{0\}\ne \mathcal{M} \subset H$ is a $V_\beta-$invariant subspace, $|\beta+\alpha\|\le\tau(E)$, then there exist $\gamma\in \mathbb{R}$ and a HB function $F$ with $\cH(F)\in \text{Chain}(\cH(E))$ such that 	
	$$\cW\mathcal{M}=(e^{i\gamma z}\cH(F))^\perp,\quad \gamma\in \mathbb{R}.$$
	 Moreover, we have	\begin{equation*}
		|\gamma+\alpha|\le \tau(E)-\tau(F),\quad 	|\beta-\gamma|\le \tau(F).
	\end{equation*} 
\end{corollary}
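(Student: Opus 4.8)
The plan is to pass to orthogonal complements and reduce the statement to the description of the invariant subspaces of $L_\beta=\cW V_\beta^*\cW^{-1}$ that is already contained in the proof of Theorem \ref{thm:1'}. Since $V_\beta$ is compact, hence bounded, the closure of a $V_\beta$-invariant subspace is again $V_\beta$-invariant, so I may assume $\mathcal{M}$ is closed. The key elementary observation is the complement duality
\[
V_\beta\mathcal{M}\subset\mathcal{M}\quad\Longleftrightarrow\quad V_\beta^*\mathcal{M}^\perp\subset\mathcal{M}^\perp,
\]
valid for any closed subspace $\mathcal{M}$ of $H$. Transporting this through the unitary $\cW$, and using that $\cW(\mathcal{M}^\perp)=(\cW\mathcal{M})^\perp$ with the complement taken in $\cH=\cW H$, the problem becomes one about $L_\beta$-invariant subspaces of $\cH$.

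Next I would record the identification $\cW V_\beta^*\cW^{-1}=L_\beta$. Indeed, combining \eqref{Vx} with $\cW x_\beta(z)=(1-e^{i\beta z})/z$ gives $1-z\cW x_\beta(z)=e^{i\beta z}$, so that $\cW V_\beta^*\cW^{-1}f(z)=(f(z)-f(0)e^{i\beta z})/z=L_\beta f(z)$, which is exactly the operator appearing in the proof of Theorem \ref{thm:1'}. Consequently $\cW\mathcal{M}^\perp$ is a closed $L_\beta$-invariant subspace of $\cH$, and the whole task is to describe it.

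The main content is then the characterization of such subspaces, which I would import from the first part of the proof of Theorem \ref{thm:1'}: a nonzero closed $L_\beta$-invariant subspace $\cN$ is invariant for $(I-\lambda L_\beta)^{-1}L_\beta$, whose action is $f\mapsto (f(z)-f(\lambda)e^{i\beta(z-\lambda)})/(z-\lambda)$; in particular $f\in\cN$ with $f(\lambda)=0$ forces $f/(z-\lambda)\in\cN$, the division property that renders $\cN$ nearly invariant without common zeros. Corollary \ref{subspace:deBranges} then yields $\cN=e^{i\gamma z}\cH(F)$ with $\cH(F)\in\text{Chain}(\cH(E))$ subject to \eqref{v_beta_star_inv}, that is $|\gamma+\alpha|\le\tau(E)-\tau(F)$ and $|\beta-\gamma|\le\tau(F)$. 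Applying this with $\cN=\cW\mathcal{M}^\perp$ and taking complements back gives $\cW\mathcal{M}=(e^{i\gamma z}\cH(F))^\perp$ together with the asserted inequalities.

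The only delicate point, and the one I expect to require separate attention, is the degenerate boundary case $\mathcal{M}=H$, where $\cW\mathcal{M}^\perp=\{0\}$ falls outside the nonzero-subspace statement above. Here I would take $F$ to be a nonzero constant, so that $\cH(F)=\{0\}$, $\tau(F)=0$ and $\gamma=\beta$, and the required inequalities collapse to the standing hypothesis $|\beta+\alpha|\le\tau(E)$. Apart from this bookkeeping, and from confirming that the \emph{no common zeros} hypothesis of Corollary \ref{subspace:deBranges} really is automatic for $L_\beta$-invariant subspaces (which follows by dividing out any putative common zero using the division property above), the argument is a direct transcription of what has already been established.
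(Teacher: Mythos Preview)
Your proposal is correct and follows exactly the route the paper intends: the paper's entire proof is the sentence ``A direct application of the proof gives the general form of $V_\beta$-invariant subspaces,'' and you have simply spelled out that application---pass to $\mathcal{M}^\perp$, transport via $\cW$ to obtain an $L_\beta$-invariant subspace, and invoke the nearly-invariant classification from the proof of Theorem~\ref{thm:1'} together with \eqref{v_beta_star_inv}. The only caveat is your handling of $\mathcal{M}=H$: a nonzero constant is not an HB function, so $\cH(F)=\{0\}$ does not literally fit the statement; the corollary should be read as concerning proper nonzero closed subspaces, which is how it is used throughout the paper.
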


		\subsection{Some applications} \label{some_applications}  Our first set of applications regards  the situations described in Examples \ref{ex_model} and is based on the following observation. Given a positive definite measurable $n\times n$ matrix-valued function $w$ on $(a,b)$, we denote as in Section \ref{sec:examples} by $L^2((a,b), w)$ the corresponding weighted $L^2-$space consisting of $\bC^n-$ valued functions with
		$$\|f\|^2=\int_a^b\langle  wf(t),f(t)\rangle_{\bC^n}dt<\infty.$$
	As usual we identify functions $f,g$ with $wf=wg$ a.e..
		\begin{proposition}\label{interval_chain} Assume $H=L^2((a,b),w)$, $V$ is unicellular and for every $c\in (a,b)$ the subspace $$M_c=\{f\in L^2((a,b),w):~wf|[a,c]=0 \text{ \em a.e.}\},$$
			is invariant for $V$. Then every $V-$invariant subspace has the form $M_c$ for some $c\in (a,b)$.
\end{proposition}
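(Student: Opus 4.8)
The plan is to treat this as essentially an order-theoretic statement: the family $\{M_c : c \in (a,b)\}$ is itself a chain of $V$-invariant subspaces that varies \emph{continuously} in $c$, and unicellularity will force an arbitrary invariant subspace to be squeezed onto one of its members. First I would record the monotonicity and continuity of the family. Since $M_c$ consists of the $f$ with $wf = 0$ a.e.\ on $[a,c]$, the map $c \mapsto M_c$ is order-reversing, and for every $c_0 \in (a,b)$ one has
\[
\bigcap_{c < c_0} M_c = M_{c_0} = \overline{\bigcup_{c > c_0} M_c}.
\]
The first equality holds because $[a,c_0)$ and $[a,c_0]$ differ by a single point, hence by a $w$-null set (intersect over a sequence $c_n \uparrow c_0$). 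The second holds because any $f \in M_{c_0}$ is approximated in $L^2((a,b),w)$ by the truncations $\mathbf{1}_{[c_0+1/n,\,b]}f \in M_{c_0+1/n}$, using that $w(f - \mathbf{1}_{[c_0+1/n,b]}f)$ is supported on $(c_0, c_0+1/n)$ and so has norm tending to $0$. This ``no-jump'' property of the chain is the heart of the matter. I would also note the endpoint behaviour $\bigcap_{c} M_c = \{0\}$ and $\overline{\bigcup_c M_c} = H$.

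Next let $\mathcal{M}$ be a closed $V$-invariant subspace with $\{0\} \neq \mathcal{M} \neq H$ (the two trivial subspaces are the limits $c \to b$ and $c \to a$, which the open interval excludes). Because $V$ is unicellular, $\mathcal{M}$ is comparable to every $M_c$, so
\[
S = \{c \in (a,b) : \mathcal{M} \subseteq M_c\}, \qquad T = \{c \in (a,b) : M_c \subseteq \mathcal{M}\}
\]
satisfy $S \cup T = (a,b)$, and order-reversal of $c \mapsto M_c$ makes $S$ downward-closed and $T$ upward-closed. Since $\mathcal{M} \neq H$ and $\overline{\bigcup_c M_c} = H$, the set $S$ must contain points arbitrarily close to $a$; since $\mathcal{M} \neq \{0\}$ and $\bigcap_c M_c = \{0\}$, the set $T$ must contain points arbitrarily close to $b$. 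Setting $c_0 = \sup S$, one gets $(a,c_0) \subseteq S$ and $(c_0,b) \subseteq T$, and the two endpoint observations give $c_0 \in (a,b)$.

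Finally I would squeeze. For $c < c_0$ we have $c \in S$, so $\mathcal{M} \subseteq M_c$, and intersecting over $c \uparrow c_0$ yields $\mathcal{M} \subseteq M_{c_0}$. For $c > c_0$ we have $c \in T$, so $M_c \subseteq \mathcal{M}$, and taking the closed linear span over $c \downarrow c_0$ (here invoking that $\mathcal{M}$ is closed) yields $M_{c_0} \subseteq \mathcal{M}$. Hence $\mathcal{M} = M_{c_0}$. I expect the only genuine work to be the continuity identity for the chain --- the fact that it has no gaps --- since everything afterward is the formal squeeze that unicellularity permits; note in particular that the full de Branges model is \emph{not} needed for this reduction. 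A minor technical point to handle carefully is the bookkeeping in the weighted space $L^2((a,b),w)$: the vanishing conditions and the truncation approximants must be phrased in terms of the equivalence classes determined by $wf$, but this does not affect the structure of the argument.
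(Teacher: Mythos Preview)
Your proof is correct and follows essentially the same approach as the paper: use unicellularity to compare an arbitrary invariant subspace with every $M_c$, then squeeze via the continuity of the chain $c\mapsto M_c$. The paper's version is much terser---it packages your continuity identities $\bigcap_{c<c_0}M_c=M_{c_0}=\overline{\bigcup_{c>c_0}M_c}$ into the single phrase ``the set of subspaces in the statement is a maximal totally ordered set'' and then defines the same sup/inf split point---but the substance is identical. Your write-up is in fact more careful than the paper's: you keep the direction of the order-reversing map $c\mapsto M_c$ straight throughout, whereas the paper's proof contains what appears to be a typo in the inequality between $c$ and $d$.
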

		\begin{proof} The reason is that the set of subspaces in the statement is a maximal totally ordered set. If $M$ is $V-$invariant and nontrivial it must contain some $M_c$  and is contained in some $M_d$ for some $c,d\in (a,b),~c< d$ If 
			$$c_0=\sup\{c:~M_c\subset M\},\,d_0=\inf\{d:~M\subset M_d\},$$
			then clearly, $c_0=d_0$ and $M=M_{c_0}=M_{d_0}$	
\end{proof}
		
		1) Of course, this gives a quick proof of the classical result on invariant subspaces of the standard Volterra operator.

		2)  Assume $V$ is  Green's operator $V$  of a regular Schrödinger operator $D$ on $[a,b]$. From 2) of Example \ref{ex_model}, we know that the corresponding model is a de Branges space $\cH(E)$. It is also well known that the eigenfunctions of a regular Schrödinger operator have order $1/2$, see, for example, \cite{MR1943095} or \cite{MR216050}, that is,  the functions $\cH(E)$ have order at most $1/2$, so that $\tau(E)=0$ and by Corollary \ref{thm:1} the corresponding Green's operator is unicellular. In addition,  $H=L^2(a,b)$ and the subspaces $\cM_c$ in Proposition \ref{interval_chain} are $V-$invariant. Thus, all invariant subspaces of this operators are of the form $M_c,~c\in (a,b)$.

		3) The admissible pairs $(D,V)$ corresponding to a canonical system have a similar model (see 3) of Examples \ref{ex_model}), that is $\cW H=\cH(E)$ for some HB function $E$, in particular, $\alpha=0$. With the notations in \S \ref{sec:canonical_systems}, if $\bH$ is the Hamiltonian of the system then by Corollary \ref{thm:1}  the operator $V$ defined by 
		\begin{equation}\label{Green_canonical}
		Vf(x) = -\Omega \int_{0}^{x} \bH(t)f(t)dt, \quad f \in L^2((0,\ell);\bH),\,\text{or}\, f\in \mathscr{H}_\bH,
		\end{equation}
		is unicellular if and only if the type of $E$ is $0$. The exponential type of $E$ can be computed using the so called Krein-de Branges formula in terms of the associated Hamiltionian, see \cite{https://doi.org/10.48550/arxiv.1408.6022, MR0229011, MR3890099, MR133456} and \eqref{eq:krein_de_branges_formula_prelim} in Section \ref{sec:prelim}, and it is given by
		\begin{equation}\label{eq:KdB_formula}
		\tau(E) = \int_{0}^{\ell} \sqrt{\det{\left( \bH(x)\right)}}dx.
		\end{equation}
		Since $\bH$ is positive definite, this implies that $\tau(E)=0$  if and only if $\det{\bH(x)} = 0$, for almost every $x$. Proposition \ref{interval_chain} applies and we obtain the following result which actually contains the case of Green's operators  of regular Schr\"odinger operators considered above.
		\begin{corollary}\label{cor:greens_operator}
			Let $V$ be the Green's operator, given by equation \eqref{Green_canonical}, corresponding to a canonical system on $(0,\ell)$. 
			If $\det{\bH(x)} = 0$ a.e. then  the invariant   subspaces of $V$   are
			$$M_c=\{f\in L^2((0,\ell),\bH):~f|[0,c]=0 \text{ \em a.e.}\}, \quad c\in (0,\ell).$$
		\end{corollary} 	
		
		  If $\bH$ is not singular a.e. the situation is more complicated since $V$ might fail to be unicellular. 	  
		However,   by Theorem \ref{thm:1'} there always exist two rank one perturbations of $V$ which are  quasi-nilpotent and unicellular. These operators can be found with help  of Proposition \ref{quasi-nilpotent-ri}. Since $\alpha=0$, it yields  the equation
			\begin{equation*}
			\int_0^\ell \langle \bH(x)u_{\pm}(x), \phi_{\overline{z}}(x) \rangle_{\mathbb{C}^{2}} dx = \frac{1-e^{\pm i \tau(E)z}}{z}.
		\end{equation*}
		Although, $\tau(E)$ can be defined entirely in terms of the Hamiltonian via the Krein-de Branges formula \eqref{eq:KdB_formula} it seems to be a difficult task to find  explicit solutions of this equation.

	As pointed out in the introduction, the study of invariant subspaces of such Green's operators $V$ was suggested by C. Remling (\cite{MR3890099}, notes to Chapter 5).
	 One motivation is that understanding unicellularity in this context would lead to uniqueness theorems for Krein-de Branges canonical systems without appealing to the theory of de Branges spaces (given that said unicellularity was proven without de Branges spaces).
	 
	Let us now turn to  self-adjoint operators with removable spectrum as considered in \S \ref{sec:compact_A}, that is, 		
	compact self-adjoint  operators which possess  a  quasi-nilpotent rank one perturbation.  We want to show that under natural assumptions such operators have rank one perturbations which are both quasi-nilpotent and  unicellular. 
\begin{corollary}\label{thm:3}
	Let $A$ be an injective compact self-adjoint operator with simple spectrum on the separable Hilbert space $H$. Suppose there exists a rank one perturbation of $A$ of the form
	\begin{equation*}
		V_{x,y} u = Au + \langle u, x\rangle y, \quad u\in H,
	\end{equation*}
which is quasi-nilpotent and such that $y \notin \text{\rm Ran}(A)$. Then there exists $z \in H$, such that the rank one perturbation
	\begin{equation*}
		V_{z,y}u = Af + \langle u,z \rangle y, \quad u \in H,
	\end{equation*}
	is  quasi-nilpotent and  unicellular.
\end{corollary}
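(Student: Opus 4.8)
The plan is to recognize the statement as an essentially immediate consequence of Theorem \ref{thm:1'} once the correct admissible pair has been put in place. The hypotheses of the corollary are precisely those of Proposition \ref{removable_spectrum}: $A$ is injective, compact and self-adjoint with simple spectrum, the existence of the quasi-nilpotent $V_{x,y}$ is exactly the statement that the spectrum is removable, and we are given $y \notin \text{Ran}(A)$. So first I would set $V = V_{x,y}$, define $\mathscr{D}(D) = \text{Ran}(A) + \bC y$ and $D(Au+cy) = u$, and invoke Proposition \ref{removable_spectrum} to conclude that $(D,V)$ is an admissible pair.

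Next I would pin down the kernel and the family of right inverses. Since $D(Au+cy) = u$, one reads off that $\ker D = \bC y$, so we may take $\phi_0 = y$ (note $y \neq 0$ because $y \notin \text{Ran}(A)$). By \eqref{right-inverses} the right inverses of $D$ are exactly the operators $V_w = V + \langle \cdot, w\rangle \phi_0 = V + \langle \cdot, w\rangle y$, $w \in H$. A one-line computation identifies these with the rank one perturbations appearing in the statement:
\begin{equation*}
V_{z,y}u = Au + \langle u, z\rangle y = Vu + \langle u, z-x\rangle y,
\end{equation*}
so that $V_{z,y} = V_w$ with $w = z-x$. In other words, searching for a quasi-nilpotent unicellular $V_{z,y}$ is the same as searching for a quasi-nilpotent unicellular right inverse $V_w$ of $D$.

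Finally I would apply Theorem \ref{thm:1'} to the admissible pair $(D,V)$. It produces a quasi-nilpotent unicellular right inverse, say $V_{+}f = Vf + \langle f, \cW^{-1}s_{+}\rangle \phi_0 = Vf + \langle f, \cW^{-1}s_{+}\rangle y$. Setting $z = x + \cW^{-1}s_{+}$ then gives $V_{z,y} = V_{+}$, which is simultaneously quasi-nilpotent and unicellular, as required.

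I do not anticipate any genuine obstacle here; the entire content is supplied by the earlier machinery, and the only care needed is bookkeeping: verifying that the hypotheses of Proposition \ref{removable_spectrum} are met verbatim, that $\phi_0$ may be chosen equal to $y$, and that the perturbations $V_{z,y}$ are precisely the right inverses $V_w$ under the correspondence $w = z-x$. The alternative choice $s_{-}$ yields a second admissible value of $z$, and by Theorem \ref{thm:1'} these are the only unicellular quasi-nilpotent perturbations of this form.
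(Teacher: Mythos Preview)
Your proposal is correct and follows essentially the same route as the paper: define $D$ as in \eqref{def_D_sa_case}, invoke Proposition \ref{removable_spectrum} to get an admissible pair with $\phi_0=y$, and apply Theorem \ref{thm:1'} to produce a unicellular quasi-nilpotent right inverse, which is then automatically of the form $V_{z,y}$. The paper's proof is terser and simply asserts that the right inverse produced by Theorem \ref{thm:1'} ``must have the form $V_{z,y}$'', whereas you spell out the bookkeeping $V_{z,y}=V_w$ with $w=z-x$ explicitly.
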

   \begin{proof}[Proof of Theorem \ref{thm:3}]
   	Recall that $A$ is an injective compact self-adjoint operator with removable spectrum. If $V_{x,y}$ is the quasi-nilpotent  rank one perturbation in the statement  with $y \notin \text{\rm Ran}(A)$, define $D$ by \eqref{def_D_sa_case} and use Proposition \ref{removable_spectrum} to conclude that $D,  V_{x,y}$ is an admissible pair with $Dy=0$. Then by 
   	Theorem \ref{thm:1'} $D$ has at least one unicellular quasi-nilpotent right inverse $V$ which must have the form $V_{z,y}$ for some $z\in H$.
   \end{proof}
\end{section}
    
\begin{section}{$D-$invariant subspaces} \label{sec:D_invariant}
	\subsection{The space $C^\infty(D)$}
  We now turn our attention to the operator $D$ and  study some of its invariant subspaces. Following the classical case we first introduce our analogue of $C^\infty$ using the simple observation below. For $n\in \mathbb{N}$ we define inductively $$\mathscr{D}(D^n)=\{x\in \mathscr{D}(D^{n-1}):~D^{n-1}x\in \mathscr{D}(D)\}.$$
  \begin{proposition}\label{dom_D} 
  We have $$\mathscr{D}(D^n)=\left\{\sum_{k=0}^{n-1}a_kV^k\phi_0 +V^nx;~a_k\in \mathbb{C}, 0\le k\le n-1,\, x\in H  \right\}.$$
  In particular, $\mathscr{D}(D^n)\subset\mathscr{D}(D^{n-1})$. Moreover, each $\mathscr{D}(D^n)$ is a Hilbert space with respect to the  norm
  \begin{equation}
  \label{V_n_norm}
 \|x\|_n^2=\sum_{k=0}^n\|D^kx\|_H^2, \end{equation}
 and the inclusion map from  $\mathscr{D}(D^{n+1})$	into $\mathscr{D}(D^n)$ is compact.
  	\end{proposition}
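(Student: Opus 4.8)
The plan is to establish the four assertions in turn, deriving the completeness and the compactness statements from the explicit description of $\mathscr{D}(D^n)$, which I would prove first by induction on $n$. For the base case $n=1$, the key is to write every $u\in\mathscr{D}(D)$ as $u=(u-VDu)+VDu$; here $D(u-VDu)=Du-Du=0$ by assumption (v), so $u-VDu\in\ker D=\mathbb{C}\phi_0$, while the reverse inclusion is immediate since $\phi_0\in\ker D$ and $\text{Ran}(V)\subset\mathscr{D}(D)$. For the inductive step I would use the two identities $DV^k=V^{k-1}$ (an immediate consequence of $DV=I$) and $D\phi_0=0$, which together give $D^{n-1}(V^k\phi_0)=0$ for $0\le k\le n-2$ and $D^{n-1}(V^{n-1}x)=x$. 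Writing a generic $u\in\mathscr{D}(D^{n-1})$ in the inductive form one then finds $D^{n-1}u=x$, so that $u\in\mathscr{D}(D^n)$ precisely when $x\in\mathscr{D}(D)$; applying the base case to $x$ promotes the representation from level $n-1$ to level $n$. The inclusion $\mathscr{D}(D^n)\subset\mathscr{D}(D^{n-1})$ is then read off directly from this form (or from the definition of $\mathscr{D}(D^n)$).

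That $\mathscr{D}(D^n)$ is complete in the norm \eqref{V_n_norm} is the standard fact that the graph norm of a closed operator is complete: if $(u_m)$ is $\norm{\cdot}_n$-Cauchy then each sequence $(D^k u_m)_m$, $0\le k\le n$, converges in $H$ to some $v_k$, and applying the closedness of $D$ from assumption (i) successively to the pairs $(D^k u_m,D^{k+1}u_m)$ shows that $v_0\in\mathscr{D}(D^n)$ with $D^k v_0=v_k$, whence $u_m\to v_0$ in $\norm{\cdot}_n$.

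The substantive point is the compactness of the inclusion $\mathscr{D}(D^{n+1})\hookrightarrow\mathscr{D}(D^n)$, and here the plan is to combine the description above with the compactness of $V$ from assumption (iv). Given $(u_m)$ bounded in $\norm{\cdot}_{n+1}$, I would write $u_m=\sum_{k=0}^n a_k^{(m)}V^k\phi_0+V^{n+1}x_m$ with $x_m=D^{n+1}u_m$, so that $D^j u_m=\sum_{k=j}^n a_k^{(m)}V^{k-j}\phi_0+V^{n+1-j}x_m$ for $0\le j\le n+1$. Boundedness of $\norm{u_m}_{n+1}$ forces each $\norm{D^j u_m}_H$ to be bounded, in particular $(x_m)$ is bounded in $H$; then reading the identities for $j=n,n-1,\dots,0$ in turn and using $\phi_0\neq0$ lets me peel off the coefficients one at a time (from $a_n^{(m)}\phi_0=D^n u_m-Vx_m$, then $a_{n-1}^{(m)}\phi_0=D^{n-1}u_m-a_n^{(m)}V\phi_0-V^2x_m$, and so on) to conclude that each $(a_k^{(m)})_m$ is bounded in $\mathbb{C}$. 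Passing to a subsequence I may assume $a_k^{(m)}\to a_k$ for every $k$, and since $V$ is compact and $(x_m)$ bounded a further subsequence makes $Vx_m$ convergent in $H$. Because $n+1-j\ge1$ for every $0\le j\le n$, I may write $V^{n+1-j}x_m=V^{n-j}(Vx_m)$, which converges by continuity of $V^{n-j}$, while the finite sums converge since their coefficients do; hence $D^j u_m$ converges in $H$ for all $0\le j\le n$, i.e.\ $(u_m)$ converges in $\norm{\cdot}_n$, the limit lying in $\mathscr{D}(D^n)$ by the completeness just established.

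The \emph{main obstacle} — and the structural reason the embedding drops exactly one derivative — is precisely this last point: the term $V^{n+1-j}x_m$ is only relatively compact when it retains at least one factor of $V$, which holds for $j\le n$ but fails at $j=n+1$, where $V^0 x_m=x_m$ is merely bounded. I would also be careful to bound the finitely many coefficients $a_k^{(m)}$ \emph{before} invoking compactness of $V$, since without controlling them the decomposition of $D^j u_m$ does not separate cleanly into a convergent finite-dimensional part and a relatively compact $V$-part.
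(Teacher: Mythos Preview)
Your proof is correct and follows essentially the same approach as the paper: the inductive description of $\mathscr{D}(D^n)$ is obtained in the same way, and the compactness of the inclusion is deduced from the compactness of $V$ exactly as the paper indicates (the paper simply declares the remaining assertions ``straightforward'' without spelling out the subsequence argument you give). Your treatment is more detailed but not substantively different.
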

  
  \begin{proof} Since $DV=I$, $Dx=y$ implies $$x=Vy+a_0\phi_0,$$
 which gives the result for $n=1$. If we assume the assertion holds true for $n-1$ then any $y\in \mathscr{D}(D^{n-1})$ has the form 
 $y=p(V)\phi_0+V^{n-1}x$ with $p$ a polynomial of degree at most $n-2$. Then $D^{n-1}y\in \mathscr{D}(D)$, is equivalent to 
 $$x=c\phi_0+Vz\,\Rightarrow y=c\phi_0+Vp(V)\phi_0+V^nz,$$
 which concludes the proof of the first part. The remaining part is straightforward. For example, the compactness assertion follows directly from the compactness of $V$ on $H$.
\end{proof} 

\begin{remark}\label{compact_inclusion} 1)
The argument  actually shows that    $\mathscr{D}(D^n)$ is the maximal domain of $D^n$.\\
2) Obviously, $\mathscr{D}(D^n)$ is invariant for  any right inverse of $D$. If the right inverse in question is quasi-nilpotent on $H$, the same  will hold on $\mathscr{D}(D^n)$. Also, the compactness of the inclusion implies that the restriction of any right inverse of $D$ to this space is compact as well.
\end{remark}
 Recall that the eigenvectors of $D$ are $\phi_\lambda=(1-\lambda V)^{-1}\phi_0,~\lambda\in \mathbb{C}$ which obviously belong to $\mathscr{D}(D^n)$ for all $n\in \mathbb{N}$.
\begin{corollary}\label{V|D(D^n)} For every $n\in \mathbb{N}$, the map $\Phi(\lambda)=\phi_\lambda$ is an entire $\mathscr{D}(D^n)-$valued function with 
	$$\Phi^{(k)}(\lambda)=k!V^k(1-\lambda V)^{-k-1}\phi_0,$$
	and the linear span of $\{\Phi(\lambda)=\phi_\lambda:~\lambda\in \mathbb{C}\}$ is dense in $\mathscr{D}(D^n)$.
\end{corollary}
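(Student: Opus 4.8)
The plan is to extract everything from the single power series representation $\phi_\lambda = (I-\lambda V)^{-1}\phi_0 = \sum_{j\ge 0}\lambda^j V^j\phi_0$, now read in the graph norm $\|\cdot\|_n$ of $\mathscr{D}(D^n)$ rather than in $H$. First I would record that each $V^j\phi_0$ lies in $\mathscr{D}(D^n)$ (indeed in $C^\infty(D)$), and that, since $DV=I$, one has $D^k V^j\phi_0 = V^{j-k}\phi_0$ for $k\le j$; hence for $j\ge n$
\begin{equation*}
\|V^j\phi_0\|_n^2 = \sum_{k=0}^n \|V^{j-k}\phi_0\|_H^2.
\end{equation*}
Because $V$ is quasi-nilpotent, $\|V^m\|^{1/m}\to 0$, so $\sum_j|\lambda|^j\|V^j\phi_0\|_n<\infty$ for every $\lambda$. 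Thus the series converges absolutely in $\mathscr{D}(D^n)$ with infinite radius of convergence, which shows at once that $\Phi(\lambda)=\phi_\lambda$ is an entire $\mathscr{D}(D^n)$-valued function. Differentiating term by term and resumming gives $\Phi^{(k)}(\lambda)=k!\sum_{j\ge k}\binom{j}{k}\lambda^{j-k}V^j\phi_0 = k!\,V^k(I-\lambda V)^{-k-1}\phi_0$, which is the asserted formula. In particular $\Phi^{(k)}(0)=k!\,V^k\phi_0$.

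For the density statement I would first reduce it to a cyclicity question. Since $\Phi$ is $\mathscr{D}(D^n)$-analytic, each derivative $\Phi^{(k)}(0)=k!\,V^k\phi_0$ is recovered from the values $\{\phi_\lambda\}$ via the Cauchy integral formula, hence is a limit of their linear combinations and so lies in the closed linear span $\mathcal{S}:=\overline{\operatorname{span}}\{\phi_\lambda:\lambda\in\mathbb{C}\}$ taken in $\mathscr{D}(D^n)$. Therefore $\mathcal{S}\supseteq \overline{\operatorname{span}}\{V^k\phi_0:k\ge0\}$, and it suffices to prove that this latter closed span is all of $\mathscr{D}(D^n)$. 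By Proposition \ref{dom_D} a general element of $\mathscr{D}(D^n)$ has the form $\sum_{k=0}^{n-1}a_kV^k\phi_0 + V^n x$ with $x\in H$; the finite sum already lies in the span, so the whole matter comes down to showing $V^n x\in\overline{\operatorname{span}}\{V^k\phi_0\}$ for every $x\in H$.

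Here I would use that $\phi_0$ is cyclic for $V$ in $H$. Indeed, the power series gives $\phi_\lambda\in\overline{\operatorname{span}}_H\{V^k\phi_0\}$ for every $\lambda$, and by Lemma \ref{one-dim ker} the normalized eigenvectors $\{u_\lambda:\lambda\in\Lambda\}$ form an orthonormal basis of $H$; hence $\overline{\operatorname{span}}_H\{V^k\phi_0\}=H$. Given $x\in H$, choose $q_j\in\operatorname{span}\{V^k\phi_0\}$, say $q_j=p_j(V)\phi_0$, with $q_j\to x$ in $H$. Then $V^n q_j=p_j(V)V^n\phi_0$ still lies in $\operatorname{span}\{V^k\phi_0\}$, and the key computation
\begin{equation*}
\|V^n q_j - V^n x\|_n^2 = \sum_{k=0}^n\|D^k V^n(q_j-x)\|_H^2 = \sum_{i=0}^n\|V^i(q_j-x)\|_H^2
\end{equation*}
shows, since each $V^i$ is bounded on $H$, that $V^n q_j\to V^n x$ in $\mathscr{D}(D^n)$. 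This places $V^n x$ in the closed span and completes the argument.

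The step I expect to be the crux is this last transfer of density from $H$ to the much stronger graph norm of $\mathscr{D}(D^n)$: the point is that on the top summand $V^n H$ the graph norm is, by the displayed identity, controlled by only the first $n+1$ powers of $V$ applied to the $H$-preimage, so ordinary $H$-convergence of preimages already forces graph-norm convergence. The cyclicity of $\phi_0$, although phrased as a separate fact, is itself a direct consequence of the power series representation together with the basis property in Lemma \ref{one-dim ker}, so no genuinely new input beyond quasi-nilpotence and Proposition \ref{dom_D} is required.
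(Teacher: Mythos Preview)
Your argument is correct and follows essentially the same route as the paper's proof: both derive the entire-function property and the derivative formula from quasi-nilpotence of $V$ in the graph norm, and both obtain density by placing each $V^k\phi_0$ in the closed span via $\Phi^{(k)}(0)$ and then pushing the $H$-approximation of $x$ through $V^n$ into $\mathscr{D}(D^n)$. The only cosmetic difference is that you pass through cyclicity of $\phi_0$ for $V$ in $H$ (approximating $x$ by $p_j(V)\phi_0$), whereas the paper approximates $x$ directly by finite combinations of the eigenvectors $\phi_\lambda$ and observes that $V^n$ of such a combination is again a combination of eigenvectors plus a polynomial $q(V)\phi_0$; your version is in fact a bit cleaner since it avoids tracking that extra polynomial.
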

 \begin{proof} The first part follows directly from the simple observation  that $V |\mathscr{D}(D^n)$ is quasi-nilpotent, since by \eqref{V_n_norm} we have for $m>n$
 	$$\|V^m\|_n^2=\sum_{k=0}^n\|V^{m-k}x\|_H^2.$$ To verify the density of $\bigvee\{\phi_\lambda:~\lambda\in \mathbb{C}\}$, choose $\Lambda \subset \mathbb{R}$  such that $\left\{ \phi_{\lambda}:~ \lambda \in \Lambda\right\}$ is an orthogonal basis for $H$.  Then every $x\in H$ belongs to the closed linear span of these vectors, hence by a repeated application of $V$, there exists a polynomial $q$ such that $$V^nx+q(V)\phi_0\in \overline{\bigvee\left\{ \phi_{\lambda}:~ \lambda \in \Lambda\right\}}\subset \overline{\bigvee\left\{ \phi_{\lambda}:~ \lambda \in \mathbb{C}\right\}}.$$  
 Now by the definition of the derivative,  $$\frac1{k!}\phi^{(k)}(0)=V^k\phi_0\in \overline{\bigvee\left\{ \phi_{\lambda}:~ \lambda \in \mathbb{C}\right\}},\quad k\ge 0,$$  hence the set on the right contains $p(V)\phi_0$ for any polynomial $p$ and  Proposition \ref{dom_D} gives the desired result.
 	\end{proof}
 	
 Let us now turn to our analogue of $C^\infty$ defined as 
\begin{equation}\label{c_infty}
C^\infty(D)=\bigcap_{n\ge 1}\mathscr{D}(D^n).
\end{equation}
The space appears naturally as a projective limit of  the Hilbert spaces $\mathscr{D}(D^n)$ (see \cite{MR3154940}, Chapter 3). Alternatively, the  topology on $C^{\infty}(D)$ is given by the translation-invariant metric
  \begin{equation*}
    d(x,y) = \sum_{j=0}^{\infty} 2^{-j} \frac{\norm{D^{j}x-D^{j}y}_{H}}{1+\norm{D^{j}x-D^{j}y}_{H}}.
  \end{equation*}
and $C^{\infty}(D)$ is complete and separable with respect to this metric.
The operator  $D$ together with all its right inverses act continuously on $C^\infty(D)$. Moreover, $C^\infty(D)$ is a  dense subspace of $H$, since it contains all eigenvectors $\phi_\lambda,~\lambda\in \mathbb{C}$ of $D$.

 Recall from \cite{MR3154940} that any continuous
linear functional $\varphi$ on $C^{\infty}(D)$  can be written in the form 
\begin{equation}\label{clf}
	\varphi(x) = \sum_{j=0}^{n} \langle D^{j}x, y_{j} \rangle_{H},
\end{equation}
with $y_{j}  \in H\setminus\{0\},~0\le j\le n$ fixed. The representation is not unique and the smallest $n$ for which such a representation holds will be called the \emph{order} of $\varphi$.
Obviously,  $\varphi$ has order at most $n$ if and only if it extends to a continuous linear functional on $\mathscr{D}(D^n)$.
The space of these continuous functionals 
will be denoted by $(C^\infty(D))'$.

As an application we prove the following  proposition which collects basic density results that will be needed in the sequel.
\begin{proposition}\label{prop:density}
	We have
	\begin{enumerate}[(i)]
		\item $\left\{ \phi_{z} \right\}_{z \in \mathbb{C}}$ has dense linear span in $C^{\infty}(D)$,
		\item For any      
		$\lambda_{0} \in \mathbb{C}$, the set $\left\{[V(I-\lambda_{0}V)^{-1}]^j\phi_{\lambda_{0}}:~j\ge 0\right\}$ has dense linear span in $C^{\infty}(D)$.
	\end{enumerate}
\end{proposition}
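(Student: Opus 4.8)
The plan is to reduce both statements to the single-norm approximation already supplied by Corollary \ref{V|D(D^n)}, exploiting the projective-limit structure of $C^\infty(D)$. Recall that the topology of $C^\infty(D)$ is generated by the increasing family of Hilbert norms $\norm{\cdot}_n$ from \eqref{V_n_norm}, so that a basic neighborhood of a point $x$ has the form $\{y\in C^\infty(D):\norm{y-x}_n<\epsilon\}$ for a \emph{single} index $n$. Consequently a subset $S\subset C^\infty(D)$ has dense linear span exactly when, for every $x\in C^\infty(D)$, every $n$, and every $\epsilon>0$, there is an element of the linear span of $S$ within $\epsilon$ of $x$ in the norm $\norm{\cdot}_n$. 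Thus it suffices to verify approximation in each $\mathscr{D}(D^n)$ separately, rather than a single sequence that converges in all norms at once.

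With this reduction, part (i) is immediate. By Corollary \ref{V|D(D^n)} the linear span of $\{\phi_z\}_{z\in\bC}$ is dense in $\mathscr{D}(D^n)$ for every $n$, and since $C^\infty(D)\subset\mathscr{D}(D^n)$ while each $\phi_z$ lies in $C^\infty(D)$, any $x\in C^\infty(D)$ is approximable in $\norm{\cdot}_n$ by finite combinations of the $\phi_z$. By the criterion above this is precisely the asserted density.

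For part (ii) I would first identify the given vectors with the Taylor coefficients of the eigenvector map $\Phi(\la)=\phi_\la$ at $\la_0$. Writing $W=V(I-\la_0V)^{-1}$ and using that $V$ commutes with $(I-\la_0V)^{-1}$, one computes $W^j\phi_{\la_0}=V^j(I-\la_0V)^{-j-1}\phi_0=\tfrac1{j!}\Phi^{(j)}(\la_0)$, where the final equality is the formula for $\Phi^{(j)}$ recorded in Corollary \ref{V|D(D^n)}. Hence $\operatorname{span}\{[V(I-\la_0V)^{-1}]^j\phi_{\la_0}:j\ge0\}$ coincides with the span of the Taylor coefficients of $\Phi$ at $\la_0$. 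Since $\Phi$ is an entire $\mathscr{D}(D^n)$-valued function, its Taylor series converges in the norm $\norm{\cdot}_n$ for each $n$:
\begin{equation*}
\phi_\la=\sum_{k\ge0}\frac{(\la-\la_0)^k}{k!}\Phi^{(k)}(\la_0),
\end{equation*}
and the partial sums lie in $\operatorname{span}\{W^j\phi_{\la_0}\}$. By the reduction of the first paragraph this shows every $\phi_\la$ lies in the $C^\infty(D)$-closure of $\operatorname{span}\{W^j\phi_{\la_0}\}$; combining with part (i), which gives density of $\operatorname{span}\{\phi_\la\}$, forces this closure to be all of $C^\infty(D)$.

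The only genuinely delicate point is the opening topological reduction: one must observe that, because the defining norms are nested, a neighborhood basis is obtained one norm at a time, so density in the Fréchet topology requires only separate approximation in each $\norm{\cdot}_n$ and not simultaneous approximation by a single sequence. Once this is granted, both claims follow mechanically from Corollary \ref{V|D(D^n)} together with the convergence of the vector-valued Taylor expansion of $\Phi$ in each $\mathscr{D}(D^n)$, and no further estimates are needed.
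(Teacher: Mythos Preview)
Your proof is correct and follows essentially the same route as the paper. The only stylistic difference is that for (ii) you argue on the primal side, using the Taylor expansion of $\Phi$ in each $\mathscr{D}(D^n)$ to exhibit $\phi_\la$ explicitly as a limit of combinations of the $W^j\phi_{\la_0}$, whereas the paper argues dually: a continuous functional $\varphi$ on $C^\infty(D)$ has finite order by \eqref{clf}, hence extends to some $\mathscr{D}(D^n)$, so $\la\mapsto\varphi(\phi_\la)$ is entire with all derivatives vanishing at $\la_0$, and then (i) finishes. Both arguments rest on exactly the same ingredients (the identification $W^j\phi_{\la_0}=\tfrac1{j!}\Phi^{(j)}(\la_0)$ and the analyticity of $\Phi$ in each $\mathscr{D}(D^n)$), so there is no substantive difference.
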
 

\begin{proof}
	(i) is a direct application of Corollary \ref{V|D(D^n)}. (ii)   If $\varphi$ is a continuous linear functional on $C^\infty(D)$ which annihilates   the set in the statement, by \eqref{clf} and another application	of Corollary \ref{V|D(D^n)} it follows that all derivatives of the entire function $\lambda\mapsto \varphi(\phi_\lambda)$ vanish at $\lambda_0$. Then the function vanishes identically and the result follows by (i). 
\end{proof}

 We shall extend our generalized Fourier transform $\mathcal{W}$ from Section \ref{sec:model} to the dual of $C^\infty(D)$ by
\begin{equation}\label{ext_Fourier}
	\mathcal{W}\varphi(z) = \varphi(\phi_{z})^{\#} = \sum_{j=0}^{N} \langle y_{j}, D^{j}\phi_{\overline{z}} \rangle = \sum_{j=0}^{N} z^{j} \langle y_{j}, \phi_{\overline{z}} \rangle.
\end{equation}

 The dual space $(C^\infty(D))'$ can be endowed with the weak-star topology, i.e. the smallest topology such that all maps $\varphi\mapsto\varphi(x),~x\in C^\infty(D)$ are continuous.

 We shall use repeatedly the following results. The first is an appropriate version of the Krein-Smulian theorem and can be found in \cite{MR3154940}, Chapter 6.
 \begin{proposition}\label{K_S} For $r>0$ let $$B(0,r)^\circ =\{\varphi\in (C^\infty(D))':~|\varphi(x)|\le 1~ \text{whenever } d(x,0) \le r)\}.$$  	
 	A convex set $C\subset (C^\infty(D))'$
 	is $w^*-$closed  if and only if $C\cap B(0,r)^\circ$ is $w^*-$closed for all $r> 0$.
 \end{proposition}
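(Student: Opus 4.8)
The plan is to prove the nontrivial implication; the converse is immediate once we record that each polar $B(0,r)^\circ$ is $w^*$-compact. Since $C^\infty(D)$ is a metrizable locally convex space and $B(0,r)$ is a neighbourhood of $0$, the Alaoglu--Bourbaki theorem gives that $B(0,r)^\circ$ is $w^*$-compact, hence $w^*$-closed; so if $C$ is $w^*$-closed then each $C\cap B(0,r)^\circ$ is an intersection of $w^*$-closed sets and is $w^*$-closed. For the converse I would fix a sequence $r_n\downarrow 0$ so that the balls $U_n=B(0,r_n)$ form a neighbourhood base at $0$; then $U_n^\circ=B(0,r_n)^\circ$ is an increasing sequence of $w^*$-compact sets whose union is all of $(C^\infty(D))'$, because every continuous functional is bounded on some $U_n$ and hence lies in $U_n^\circ$. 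The goal is $\overline{C}^{\,w^*}=C$, and by a translation argument it suffices to prove the separation statement: if $0\notin C$ then there is $b\in C^\infty(D)$ with $\operatorname{Re}\varphi(b)\ge 1$ for all $\varphi\in C$, which exhibits a $w^*$-neighbourhood of $0$ disjoint from $C$. The translation reduction is routine but rests on one point worth noting: for $\varphi_0\notin C$ the shifted set $C-\varphi_0$ again has $w^*$-closed intersections with every $U_n^\circ$, since $\varphi_0+U_n^\circ$ is equicontinuous and therefore contained in some $U_m^\circ$, whence $C\cap(\varphi_0+U_n^\circ)=(C\cap U_m^\circ)\cap(\varphi_0+U_n^\circ)$ is $w^*$-closed.

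The heart of the argument is an inductive construction of a null sequence in $C^\infty(D)$. Starting from the fact that $C\cap U_1^\circ$ is $w^*$-compact and misses $0$, I would choose a finite set $A_0\subset C^\infty(D)$ with $A_0^\circ\cap C\cap U_1^\circ=\emptyset$, and then build finite sets $A_n\subset U_n$ so that, writing $G_n=A_0\cup\cdots\cup A_n$, the invariant $G_n^\circ\cap U_{n+1}^\circ\cap C=\emptyset$ persists. The inductive step is applied to the $w^*$-compact set $K=G_{n-1}^\circ\cap U_{n+1}^\circ\cap C$: because $U_n^\circ\subset U_{n+1}^\circ$, the previous invariant forces $K\cap U_n^\circ=\emptyset$, so every $\varphi\in K$ satisfies $\sup_{x\in U_n}\lvert\varphi(x)\rvert>1$. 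Covering the compact set $K$ by finitely many of the corresponding $w^*$-open sets yields a finite $A_n\subset U_n$ with $K\cap A_n^\circ=\emptyset$, which is exactly the next invariant. Enumerating $A=\bigcup_n A_n=\{a_1,a_2,\dots\}$ (listing $A_0$ first) gives $a_k\to 0$ in $C^\infty(D)$ because $A_n\subset U_n$, and the invariants guarantee $A^\circ\cap C=\emptyset$: any $\varphi\in C$ lies in some $U_{n+1}^\circ$, hence $\varphi\notin G_n^\circ\supset A^\circ$.

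It remains to convert $A^\circ\cap C=\emptyset$ into a genuine separation. Since $a_k\to0$, each continuous $\varphi$ yields $(\varphi(a_k))_k\in c_{0}$, so $T\varphi=(\varphi(a_k))_k$ defines a linear map into $c_{0}$ with $A^\circ=\{\varphi:\norm{T\varphi}_\infty\le 1\}$; thus $A^\circ\cap C=\emptyset$ says precisely that the convex set $T(C)$ is disjoint from the closed unit ball of $c_{0}$. A Hahn--Banach separation in $c_{0}$ then produces (after normalization) $\psi=(\psi_k)\in\ell^{1}=(c_{0})^*$ with $\operatorname{Re}\sum_k\psi_k\varphi(a_k)\ge 1$ for every $\varphi\in C$. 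Finally, because $\psi\in\ell^{1}$ and $(a_k)$ is bounded while $C^\infty(D)$ is complete, the series $b=\sum_k\psi_k a_k$ converges in $C^\infty(D)$; continuity of each $\varphi$ gives $\varphi(b)=\sum_k\psi_k\varphi(a_k)$, so $\operatorname{Re}\varphi(b)\ge 1$ on $C$ and the $w^*$-continuous evaluation $\varphi\mapsto\varphi(b)$ separates $0$ from $C$.

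The step I expect to be the main obstacle is the bookkeeping in the inductive construction, namely keeping the chosen vectors inside the shrinking balls $U_n$ while preserving the disjointness invariant; this is what guarantees that the limiting object is a genuine null sequence and that the passage to $c_{0}$ is legitimate. A secondary technical point, easy but essential, is verifying that the $\ell^{1}$-combination $b=\sum_k\psi_k a_k$ actually defines an element of $C^\infty(D)$, which uses completeness together with the boundedness of the null sequence in each of the seminorms $\norm{\cdot}_n$. Once these are in place, the Krein--Smulian theorem for the dual of our Fréchet space follows along the classical lines.
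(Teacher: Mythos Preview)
The paper does not supply a proof of this proposition; it simply records it as a known version of the Krein--Smulian theorem and refers to \cite{MR3154940}, Chapter~6. So there is no ``paper's own proof'' to compare against, and your write-up is in fact \emph{more} than what the authors provide.

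Your argument is correct and is essentially the classical Banach--Dieudonn\'e proof adapted to a countable base of metric balls. The forward direction via Alaoglu--Bourbaki is fine. In the reverse direction the translation reduction is handled properly: the key observation that $\varphi_0+U_n^\circ$ is equicontinuous and hence sits inside some $U_m^\circ$ is exactly what is needed to transfer the hypothesis to $C-\varphi_0$. The inductive construction is sound: at each step $K=G_{n-1}^\circ\cap U_{n+1}^\circ\cap C$ is $w^*$-compact because $C\cap U_{n+1}^\circ$ is $w^*$-closed by hypothesis and $U_{n+1}^\circ$ is $w^*$-compact, and $K\cap U_n^\circ=\emptyset$ follows from the previous invariant together with $U_n^\circ\subset U_{n+1}^\circ$. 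The passage to $c_0$ is legitimate since $(a_k)$ is a genuine null sequence; the Hahn--Banach step separates the convex set $T(C)$ from the open unit ball of $c_0$, and because that ball is balanced the separating functional $\psi\in\ell^1$ can be normalised so that $\operatorname{Re}\sum_k\psi_k\varphi(a_k)\ge 1$ on $C$. Finally, convergence of $b=\sum_k\psi_ka_k$ uses only that a null sequence is bounded in every seminorm $\|\cdot\|_n$ and that $C^\infty(D)$ is complete.

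In short: your proof is complete and correct, and supplies what the paper leaves to the literature.
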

 Note that the separability of $C^\infty(D)$ implies that the $w^*-$topology on each polar set $B(0,r)^\circ$ is metrizable. For this reason, it is sufficient to prove the sequential closedness of the sets involved in the above proposition. The following observation is very useful in this sense.
 \begin{proposition}\label{bounded order} If  $(\varphi_n)$ converges weak-star to $\varphi$ in $(C^\infty(D))'$ then there exists $N\in\mathbb{N}$ such that the order of $\varphi_n$ is at most $N$ for all $n\ge 1$. In addition, the extensions of $\varphi_n$ to  $\mathscr{D}(D^N)$ converge weak-star as well.
 \end{proposition}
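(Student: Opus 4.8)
The plan is to recognize this as an instance of the uniform boundedness principle in the Fréchet space $C^\infty(D)$. First I would observe that a weak-star convergent sequence is automatically pointwise bounded: for each fixed $x\in C^\infty(D)$ the scalar sequence $(\varphi_n(x))$ converges, hence $\sup_n|\varphi_n(x)|<\infty$. Since $C^\infty(D)$ is complete and metrizable with respect to the given translation-invariant metric, it is a Fréchet space and therefore barrelled, and the Banach--Steinhaus theorem then yields that the family $\{\varphi_n\}$ is equicontinuous.

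The key step is to convert equicontinuity into a uniform order bound, and this is where the one subtle point lies. The topology of $C^\infty(D)$, viewed as the projective limit of the Hilbert spaces $(\mathscr{D}(D^n),\|\cdot\|_n)$, admits a neighborhood base at $0$ consisting of the single-index sets $\{x:\|x\|_N<\epsilon\}$, precisely because the norms $\|\cdot\|_n$ increase with $n$ by Proposition \ref{dom_D}. Equicontinuity therefore supplies one index $N$ and a $\delta>0$ with $|\varphi_n(x)|\le 1$ whenever $\|x\|_N\le\delta$, and by homogeneity $|\varphi_n(x)|\le\delta^{-1}\|x\|_N$ for all $x$ and all $n$. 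Thus each $\varphi_n$ is continuous for the norm $\|\cdot\|_N$ and extends uniquely (as $\mathscr{D}(D^N)$ is complete for $\|\cdot\|_N$) to a functional $\tilde\varphi_n$ on $\mathscr{D}(D^N)$ with $\|\tilde\varphi_n\|\le\delta^{-1}$. Representing $\tilde\varphi_n$ via the inner product $\langle x,y\rangle_N=\sum_{k=0}^N\langle D^kx,D^ky\rangle_H$ through the Riesz theorem produces exactly a representation of the form \eqref{clf} with summation up to $N$, so $\varphi_n$ has order at most $N$; this proves the first assertion.

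For the second assertion I would exploit that $C^\infty(D)$ is dense in $\mathscr{D}(D^N)$: by Corollary \ref{V|D(D^n)} the linear span of $\{\phi_\lambda:\lambda\in\mathbb{C}\}$ is dense in $\mathscr{D}(D^N)$, and this span already lies in $C^\infty(D)$. The extensions $\tilde\varphi_n$ are uniformly bounded in $(\mathscr{D}(D^N))'$ by the previous paragraph, and on the dense subspace $C^\infty(D)$ we have $\tilde\varphi_n(x)=\varphi_n(x)\to\varphi(x)$. A routine $\epsilon/3$ argument combining the uniform bound with convergence on the dense set then shows that $(\tilde\varphi_n(x))$ is Cauchy, hence convergent, for every $x\in\mathscr{D}(D^N)$, so $\tilde\varphi_n$ converges weak-star in $(\mathscr{D}(D^N))'$ to the extension of $\varphi$. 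The main obstacle is really just the extraction of a single $N$ from equicontinuity, which hinges on the monotonicity of the norms $\|\cdot\|_n$, while the density input from Corollary \ref{V|D(D^n)} makes the final weak-star convergence routine.
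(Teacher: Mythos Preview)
Your proof is correct and follows essentially the same approach as the paper's: both use Banach--Steinhaus/uniform boundedness to obtain equicontinuity, extract from this a single index $N$ such that all $\varphi_n$ are $\|\cdot\|_N$-continuous (hence of order at most $N$), and then combine the resulting uniform bound with density of $C^\infty(D)$ in $\mathscr{D}(D^N)$ to pass to the weak-star limit. The only cosmetic difference is that the paper extracts $N$ by an explicit estimate on the metric $d$ (choosing $N$ so that the tail $\sum_{j>N}2^{-j}$ is below $\delta/2$), whereas you invoke the abstract fact that the projective-limit topology has a base of single-norm balls; these are two phrasings of the same observation.
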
\begin{proof}
 	By uniform boundedness (or Baire's theorem) there exists $\delta>0$ such that $|\varphi_n(x)|\le 1$ whenever $d(x,0)\le \delta$ and $n\ge 1$. Then if $N\in \mathbb{N}$ satisfies $\sum_{j>N}2^{-j}<\frac{\delta}{2}$, we have $$d(x,0) < \sum_{j=0}^{N} 2^{-j} \frac{\norm{D^{j}x}_{H}}{1+\norm{D^{j}x}_{H}}+\frac{\delta}{2} \le
 	\sum_{j=0}^N\|D^jx\|+\frac{\delta}{2}\le \sqrt{N}\|x\|_N+\frac{\delta}{2}.$$
 	This implies $|\varphi_n(x)|\le 1$ whenever $\|x\|_N\le\frac{\delta}{2\sqrt{N}}$ and $n\ge 1$, i.e. the order of $\varphi_n$ is at most $N$ for all $n\ge 1$. The sequence of restrictions of  $\varphi_n$ to  $\mathscr{D}(D^N)$ is uniformly bounded and converges on a dense subspace of $\mathscr{D}(D^N)$.
 \end{proof}

\subsection{Spectra of restrictions and annihilators of  invariant subspaces}\label{sec:subspace_examples}
  The main purpose of this section is to consider closed subspaces of $C^{\infty}(D)$ which are invariant under $D$. We shall denote by $\mathscr{J}$ the collection of all such subspaces. 
    
 As in the case of standard differentiation on intervals  contained in the real line, the structure of such subspaces is related to the spectrum of the restriction of the operator (see \cite{MR2419491}).
 We begin with some typical examples of invariant subspaces for $D$
 and spectra of the corresponding  restriction. To this end, it is useful to recall  (see, for example, the proof of Proposition \ref{dom_D}) that the solutions of $(D-\lambda I)x=y$ have the form 
 \begin{equation}\label{D_I)x=y}x=c\phi_\lambda + V(1-\lambda V)^{-1}y, \quad c\in \mathbb{C}.\end{equation}
 
 \begin{examples}\label{ex_spectra} {\rm 1)  Let $K$ be one of the spaces $H,~\mathscr{D}(D^n),~n\ge 1$, or $C^\infty(D)$, and let $M \subset K$ be a proper closed $V-$invariant subspace. Then 		\begin{equation*}
 			\mathcal{J}_{M} = \left\{ x \in C^{\infty}(D) : D^{j}x \in M \text{, for all } j \geq 0 \right\}. \end{equation*}
 belongs to 	$\mathscr{J}$ and satisfies 
 $\sigma(D|\mathcal{J}_M)=\emptyset$. To see this, note first that $\sigma(V|\mathcal{J}_M)=\{0\}$. 
 This is obvious when  $K=H$, or when $K=\mathscr{D}(D^n),~n\ge 1$. In the case $K=C^\infty(D)$  apply  the previous observation to the 
 closure of $M$ in $\mathscr{D}(D^n),~n\ge 1$ and use the fact that if $y\in C^\infty(D)$ the solution $x$ of $(\lambda V-I)x=y$ belongs to $C^\infty(D)$ as well. Thus,
 	$M$ is invariant for $(V-\mu)(1-\lambda V)^{-1},~\lambda,\mu\in \mathbb{C}$, in particular,  it cannot contain any $\phi_\lambda,~\lambda\in \mathbb{C}$. Otherwise, since $\phi_z=(I-\lambda V)(I-zV)^{-1}\phi_\lambda$, $M$ would contain all such elements and equal $H$.  This shows that  		 
 		 	and if $y\in \mathcal{J}_M$, the unique solution $x\in \mathcal{J}_M$ of the equation $(D-\lambda I)x=y$ is given by $$x= V(1-\lambda V)^{-1}y\in \mathcal{J}_M,$$
 		 	and the claim follows. \\ We also note that these assertions continue to if hold $V$ is replaced by any other quasi-nilpotent  right inverse of $D$.
 		 	
  2) The spectrum of $D$ on the whole space $C^\infty(D)$ equals $\mathbb{C}$. In the classical case, i.e. $D=-i\frac{d}{dx}$ on $C^\infty(a,b)$, there are also smaller $D-$invariant subspaces with this property, for example
 $$\mathcal{J}=\{f\in C^\infty(a,b):~f|I\cup J=0\},$$
 where $I,J$ are two disjoint closed subintervals of $(a,b)$.\\
 3) Let $A$ be a self-adjoint restriction of $D$ with compact resolvent. Then there exists an orthogonal basis of eigenvectors $\left\{ \phi_{\lambda} \right\}_{\lambda \in \sigma(A)}$, where $\sigma(A) \subset \mathbb{R}$ is discrete. \\ For $\emptyset\neq\Lambda\subset\sigma(A)$, let $\mathcal{J}_\Lambda$ be the closed span of $\{\phi_\lambda:~\lambda\in \Lambda\}$ in $C^\infty(D)$. This space is $D-$invariant since $D$ is continuous on $C^\infty(D)$, and  for  every finite linear  combination $$x=\sum_{j=1}^ma_j\phi_{\lambda_j}, \quad a_j\in \mathbb{C},\,\lambda_j\in \Lambda,$$
 we have $Dx\in \mathcal{J}_\Lambda$. \\We claim that $\sigma(D|\mathcal{J}_\Lambda)=\Lambda.$ Indeed, it is easy to verify that for $\mu\in \mathbb{C}\setminus \Lambda$, the linear map $R_\mu$ defined on the span of $\{\phi_\lambda:~\lambda\in \Lambda\}$ by 
$$R_\mu\left( \sum_{j=1}^ma_j\phi_{\lambda_j}\right) =\sum_{j=1}^m\frac{a_j}{\lambda_j-\mu}\phi_{\lambda_j} \quad a_j\in \mathbb{C},\,\lambda_j\in \Lambda,$$
 extends continuously to $\mathcal{J}_\Lambda$ and is an inverse of $(D-\mu I)|\mathcal{J}_\Lambda$.}

\end{examples}
 In the classical case when $D=-i\frac{d}{dx}$ on some interval on the real line, the three examples above describe completely the types of spectra that can occur for the restriction to an invariant subspace; $\mathbb{C},~\emptyset$ or a discrete subset of a complex plane (see \cite{MR2419491}). It  turns out that this result continues to hold in the general case.

 \begin{theorem}\label{thm:spectrum}
 	For each $\mathcal{J} \in \mathscr{J}$ the spectrum $\sigma(D|\mathcal{J})$ is either the whole complex plane, $\mathbb{C}$, or the discrete (possibly void) set
 	\begin{equation*}\sigma(D|\mathcal{J}) = \left\{ \lambda \in \mathbb{C} : \phi_{\lambda} \in \mathcal{J} \right\}.
 	\end{equation*}
 	If $\sigma(D|\mathcal{J}) \neq \mathbb{C},\emptyset$ then each $\lambda \in \sigma(D|\mathcal{J})$ has spectral multiplicity one.
 	\end{theorem}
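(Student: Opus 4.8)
The plan is to isolate the point spectrum first and then run a connectedness argument on its complement. Set $\Sigma=\{\lambda\in\mathbb{C}:\phi_\lambda\in\mathcal{J}\}$; since $\ker(D-\lambda I)=\mathbb{C}\phi_\lambda$ by Lemma \ref{one-dim ker}, this is precisely the set of eigenvalues of $D|\mathcal{J}$, each of geometric multiplicity one. First I would observe, using Corollary \ref{V|D(D^n)}, that $\lambda\mapsto\phi_\lambda$ is an entire $C^\infty(D)$-valued map, so that for every $\varphi$ in the annihilator $\mathcal{J}^\circ\subset(C^\infty(D))'$ the scalar function $\lambda\mapsto\varphi(\phi_\lambda)$ is entire. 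If $\mathcal{J}^\circ=\{0\}$ then $\mathcal{J}$ is dense, hence $\mathcal{J}=C^\infty(D)$ and $\sigma(D|\mathcal{J})=\mathbb{C}$ by Examples \ref{ex_spectra}. Otherwise pick $0\ne\varphi\in\mathcal{J}^\circ$; by Proposition \ref{prop:density}(i) the function $\lambda\mapsto\varphi(\phi_\lambda)$ is not identically zero, so its zero set is discrete, and since $\Sigma$ is contained in it, $\Sigma$ is discrete. In particular $\mathbb{C}\setminus\Sigma$ is open and connected.

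Assume now $\Sigma$ is discrete. Each $\lambda\in\Sigma$ is an eigenvalue, so $\Sigma\subset\sigma(D|\mathcal{J})$, and it remains to establish the dichotomy $\sigma(D|\mathcal{J})\setminus\Sigma=\emptyset$ or $\sigma(D|\mathcal{J})\setminus\Sigma=\mathbb{C}\setminus\Sigma$. For $\lambda\notin\Sigma$ the operator $(D-\lambda I)|\mathcal{J}$ is injective (its kernel $\mathbb{C}\phi_\lambda$ meets $\mathcal{J}$ trivially), and by the open mapping theorem for Fr\'echet spaces it is invertible exactly when it is onto. I would therefore set $G=\{\lambda\in\mathbb{C}\setminus\Sigma:(D-\lambda I)|\mathcal{J}\text{ is surjective}\}$ and prove that $G$ is both open and closed in $\mathbb{C}\setminus\Sigma$; connectedness then forces $G=\emptyset$ (so $\sigma(D|\mathcal{J})=\mathbb{C}$) or $G=\mathbb{C}\setminus\Sigma$ (so $\sigma(D|\mathcal{J})=\Sigma$, with multiplicity one as noted).

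Openness of $G$ is a perturbation statement. If $\lambda_0\in G$, with continuous inverse $R_0=((D-\lambda_0 I)|\mathcal{J})^{-1}$, then $(D-\lambda I)|\mathcal{J}=(D-\lambda_0 I)|\mathcal{J}\,(I-(\lambda-\lambda_0)R_0)$, so I must invert $I-(\lambda-\lambda_0)R_0$ by a Neumann series. The feature that makes this converge in the Fr\'echet topology is that $R_0$ is smoothing: the identity $D^kR_0y=\sum_{j<k}\lambda_0^{\,k-1-j}D^jy+\lambda_0^{\,k}R_0y$, coming from $(D-\lambda_0 I)R_0=I$, together with $R_0y=V_{\lambda_0}y+c(y)\phi_{\lambda_0}$ for a fixed continuous functional $c$, gives $\|R_0y\|_m\lesssim\|y\|_{m-1}+\|y\|_p$ for a fixed index $p$. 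Hence iterating $R_0$ lowers the seminorm index until it bottoms out at level $p$, which yields geometric control of $\|R_0^ny\|_m$ and convergence of the series for $|\lambda-\lambda_0|$ small.

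The decisive step is closedness of $G$, which I would obtain from an a priori estimate combined with the compactness of the inclusions $\mathscr{D}(D^{n+1})\hookrightarrow\mathscr{D}(D^n)$ from Proposition \ref{dom_D}. Suppose $\lambda_n\to\lambda_0\notin\Sigma$ with $\lambda_n\in G$, fix $y\in\mathcal{J}$, and put $x_n=((D-\lambda_n I)|\mathcal{J})^{-1}y$. If $\{x_n\}$ is bounded in every seminorm, the compact inclusions and closedness of $\mathcal{J}$ produce a subsequential limit $x_0\in\mathcal{J}$ with $(D-\lambda_0 I)x_0=y$. If instead $\{x_n\}$ is unbounded, say $t_n=\|x_n\|_{m_0}\to\infty$, I would normalize $u_n=x_n/t_n$; from $Du_n=\lambda_n u_n+y/t_n$ the estimate $\|u_n\|_{m+1}\lesssim\|u_n\|_m+\|y/t_n\|_m$ bounds every seminorm of $u_n$, so a subsequence converges in $C^\infty(D)$ to some $u_0\in\mathcal{J}$ with $\|u_0\|_{m_0}=1$ and $(D-\lambda_0 I)u_0=0$; but then $0\ne u_0\in\mathbb{C}\phi_{\lambda_0}\cap\mathcal{J}=\{0\}$, a contradiction. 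Thus the bounded alternative always holds and $\lambda_0\in G$. I expect this closedness argument, and specifically the production of the uniform seminorm bounds that feed the compactness, to be the main obstacle; the remaining ingredients are either soft (connectedness, the open mapping theorem) or direct consequences of the smoothing identity for $R_0$.
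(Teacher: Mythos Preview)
Your argument is correct and constitutes a genuinely different proof from the paper's. Both approaches begin identically: identify $\Sigma=\{\lambda:\phi_\lambda\in\mathcal J\}$ as the point spectrum, observe it is discrete when $\mathcal J\neq C^\infty(D)$ via the analyticity of $\lambda\mapsto\varphi(\phi_\lambda)$, and reduce to showing that the resolvent set of $D|\mathcal J$ is either empty or all of $\mathbb C\setminus\Sigma$. From there the arguments diverge. You run a connectedness argument on $\mathbb C\setminus\Sigma$, proving the resolvent set is open (Neumann series, exploiting that the resolvent $R_0$ drops the seminorm index until it stabilises at the order $p$ of the functional $c$) and closed (a priori bounds plus the compact inclusions $\mathscr D(D^{n+1})\hookrightarrow\mathscr D(D^n)$ to extract limits, with the normalised blow-up argument ruling out unboundedness). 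The paper instead bypasses connectedness entirely: given one resolvent point $\lambda$ and any $\mu$ with $\phi_\mu\notin\mathcal J$, it extends $R=( (D-\lambda I)|\mathcal J)^{-1}$ to the Hilbert-space closure $\mathcal J_n$ of $\mathcal J$ in $\mathscr D(D^n)$, where $R$ is \emph{compact}; the Fredholm alternative then makes $I+(\lambda-\mu)R$ invertible on $\mathcal J_n$ as soon as it is injective, and injectivity follows from $\phi_\mu\notin\mathcal J_n$ for $n$ large. Solutions in different $\mathcal J_n$ differ by multiples of $\phi_\mu$ and hence coincide, landing back in $\mathcal J$.

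What each buys: the paper's route is shorter and more algebraic, needing only one resolvent point and one application of the Fredholm alternative, with no Neumann estimates and no limiting procedure along a sequence $\lambda_n\to\lambda_0$. Your route avoids passing to the auxiliary Hilbert spaces $\mathcal J_n$ and the Fredholm machinery, staying entirely inside $C^\infty(D)$ with soft tools (Neumann series, compactness extraction); the price is the explicit seminorm bookkeeping in both the openness and closedness steps. Your anticipated obstacle (the uniform seminorm bounds feeding closedness) is indeed the crux of your argument, and your sketch handles it correctly: from $Du_n=\lambda_n u_n+y/t_n$ one inductively controls $\|D^k u_n\|_H$ starting from $\|u_n\|_{m_0}=1$, and the diagonal extraction yields $u_0\in\mathcal J$ with $\|u_0\|_{m_0}=1$ and $(D-\lambda_0 I)u_0=0$, contradicting $\lambda_0\notin\Sigma$. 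One small notational caution: your $V_{\lambda_0}$ means $V(I-\lambda_0 V)^{-1}$, which clashes with the paper's use of $V_\beta$ for the quasi-nilpotent right inverses; you may want to write the resolvent piece out explicitly.
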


 	\begin{proof} By Proposition \ref{prop:density} the set $\{\lambda:~\phi_\lambda\in \mathcal{J}\}$ must be discrete. Therefore, it will be sufficient to prove that if $\sigma(D|\mathcal{J}) \neq \mathbb{C}$, then for each $\mu\in \mathbb{C}$, $(D-\mu I)|\mathcal{J}$ is bijective whenever it is injective. The continuity of the inverse is automatic since $\mathcal{J}$ is Fr\'echet. To this end, assume $\lambda,\mu\in\mathbb{C}, ~\lambda\neq \mu$, are such that $(D-\lambda I)|\mathcal{J}$ has the continuous inverse $R:\mathcal{J}\mapsto
 	\mathcal{J}$, 	and $(D-\mu I)|\mathcal{J}$ is injective, i.e. $\phi_\mu\notin \mathcal{J}$. We need to show that 
 	$(D-\mu I)|\mathcal{J}$ is surjective.	 This is immediate in the case when $I+(\lambda-\mu)R$ is invertible on $\mathcal{J}$ since $(I+(\lambda-\mu)R)^{-1}x$ is a solution of $(D-\mu I)x=y$. 
 	
 	The remainder of the proof consists in removing the above assumption on $I+(\lambda-\mu)R$.	
 	Observe that from $(D-\lambda I)R=I$ it follows that  	
 	\begin{equation}\label{form_R}
 		Rx = \ell(x)\phi_{\lambda} + V(I-\lambda V)^{-1}x,
 		\quad x\in \mathcal{J},
 	\end{equation}	
 where $\ell$ extends to a continuous linear functional on $C^\infty(D)$. 
 Use \eqref{clf} to	choose $n_0\in \mathbb{N}$ 	such that $\ell$ is a continuous linear functional on $\mathscr{D}(D^{n_0})$, and for $n\ge n_0$ and let $\mathcal{J}_n$ be the closure of $\mathcal{J}$ in $\mathscr{D}(D^n)$. By Proposition \ref{dom_D} (see also 2) of Remark \ref{compact_inclusion}) it follows that $R$ extends to a compact operator on $\mathcal{J}_n,~n\ge n_0$. Moreover, if $n>1$ this extension satisfies $$(D-\lambda I)Rx=x,\quad x\in \mathcal{J}_n.$$
 Indeed, if $(x_k)$ is a sequence in $\mathcal{J}$ converging to $x\in \mathcal{J}_n$, then $Rx_k\to Rx$ in $\mathcal{J}_n$, and  since $D$ is continuous from $\mathscr{D}(D^n)$ to $\mathscr{D}(D^{n-1})$ we obtain
 $$(D-\lambda I)Rx=\lim_{k\to\infty}(D-\lambda I)Rx_k=\lim_{k\to\infty}x_k=x,$$
 where the above limits are considered in $\mathscr{D}(D^{n-1})$. Now  use the assumption that $\phi_\mu\notin \mathcal{J}$  to find
 $n_1\in \mathbb{N}$ such that $\phi_\mu\notin \mathcal{J}_n$ if $n\ge n_1$. Then from above  $I+(\lambda-\mu)R|\mathcal{J}_n$ is injective if $n> n_1$, since for $x\in \mathcal{J}_n$, $I+(\lambda-\mu)Rx=0$ implies
 $$0=(D-\lambda I)(I+(\lambda-\mu)R)x=	(D-\mu I)x,$$
 which implies $x=0$. But then, using the compactness of $R$, we conclude that $I+(\lambda-\mu)R|\mathcal{J}_n$ is invertible. Thus, for such $n$  we obtain exactly as above that for each $y\in \mathcal{J}$, the equation $(D-\mu I)x=y$ has the solution $$x_n=((I+(\lambda-\mu)R)|\mathcal{J}_n)^{-1}y.$$
 Finally, we observe that the solutions $x_n,x_m$ differ only by a multiple of $\phi_mu$ and since $m,n>n_1$ we conclude that $x_n=x_m=x\in \mathcal{J}$.
 
 	\end{proof}

Let us now turn to annihilators of nontrivial $D-$invariant subspaces. 
\begin{proposition}\label{lemma:annihilator}
	Let $\mathcal{J} \in \mathscr{J}$ and let $\varphi \in \mathcal{J}^{\perp}$ have order $N\ge 0$.
	\begin{enumerate}[(i)]
		\item If $\overline{\lambda} \notin \sigma(D|\mathcal{J})$ and $\mathcal{W}\varphi(\lambda) = 0$, then the continuous linear functional given by $$\varphi_{\lambda}(x) = \varphi(V(I-\overline{\lambda}V)^{-1}fx),$$ annihilates $\mathcal{J}$, if $N>0$ it has order $N-1$,  and satisfies
		\begin{equation*}
			\frac{\mathcal{W}\varphi(z)}{z-\lambda} = \mathcal{W}\varphi_{\lambda}(z).
		\end{equation*}
		\item If $\mathcal{J}^{\perp}$ contains a nonzero functional $\varphi$ such that $\mathcal{W}\varphi$ has infinitely many zeros in $\mathbb{C} \setminus  \overline{\sigma(D|\mathcal{J})}$. Then there exists a nonzero $y \in H$ which annihilates $\mathcal{J}$.
	\end{enumerate}
\end{proposition}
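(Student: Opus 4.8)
The plan is to prove (i) by a single direct computation with the resolvent-type operator $V(I-\overline{\lambda}V)^{-1}$, and then to deduce (ii) by iterating (i) finitely many times, each step peeling off one derivative, until an order-zero functional remains. The computational heart of (i) is the identity
\[
V(I-\overline{\lambda}V)^{-1}\phi_z=\frac{\phi_z-\phi_{\overline{\lambda}}}{z-\overline{\lambda}},\qquad z\in\mathbb{C},
\]
which follows from $\phi_w=(I-wV)^{-1}\phi_0$ and the partial-fraction decomposition of $t(1-\overline{\lambda}t)^{-1}(1-zt)^{-1}$ (equivalently, one verifies it by applying $I-\overline{\lambda}V$ to both sides). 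Writing $g(z)=\varphi(\phi_z)$, which is entire by Corollary \ref{V|D(D^n)}, this gives $\varphi_\lambda(\phi_z)=\bigl(g(z)-g(\overline{\lambda})\bigr)/(z-\overline{\lambda})$. The hypothesis $\mathcal{W}\varphi(\lambda)=g^{\#}(\lambda)=\overline{g(\overline{\lambda})}=0$ forces $g(\overline{\lambda})=0$, so $\varphi_\lambda(\phi_z)=g(z)/(z-\overline{\lambda})$ is entire; applying $(\cdot)^{\#}$ and using $(z-\overline{\lambda})^{\#}=z-\lambda$ yields $\mathcal{W}\varphi_\lambda(z)=g^{\#}(z)/(z-\lambda)=\mathcal{W}\varphi(z)/(z-\lambda)$, the asserted formula.

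To see that $\varphi_\lambda$ annihilates $\mathcal{J}$, I would use that $\overline{\lambda}\notin\sigma(D|\mathcal{J})$ makes the resolvent $R=((D-\overline{\lambda}I)|\mathcal{J})^{-1}$ available, and by the formula \eqref{form_R} from the proof of Theorem \ref{thm:spectrum} it has the shape $Rx=\ell(x)\phi_{\overline{\lambda}}+V(I-\overline{\lambda}V)^{-1}x$ with $\ell$ continuous. Hence $V(I-\overline{\lambda}V)^{-1}x=Rx-\ell(x)\phi_{\overline{\lambda}}$, and for $x\in\mathcal{J}$ one gets $\varphi_\lambda(x)=\varphi(Rx)-\ell(x)\varphi(\phi_{\overline{\lambda}})=0$, since $Rx\in\mathcal{J}\subset\ker\varphi$ and $\varphi(\phi_{\overline{\lambda}})=g(\overline{\lambda})=0$. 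For the order, the bound $\le N-1$ comes from $D^{j}V(I-\overline{\lambda}V)^{-1}=D^{j-1}(I-\overline{\lambda}V)^{-1}$ for $j\ge 1$ together with $D^{j-1}(I-\overline{\lambda}V)^{-1}x=\sum_{k=0}^{j-2}\overline{\lambda}^{\,k}D^{j-1-k}x+\overline{\lambda}^{\,j-1}(I-\overline{\lambda}V)^{-1}x$, whose highest derivative has order $j-1$; summing the representation of $\varphi$, the top derivative occurring is $D^{N-1}$. For the matching lower bound I would note that $V(I-\overline{\lambda}V)^{-1}$ is a right inverse of $D-\overline{\lambda}I$, so $V(I-\overline{\lambda}V)^{-1}(D-\overline{\lambda}I)u-u\in\ker(D-\overline{\lambda}I)=\mathbb{C}\phi_{\overline{\lambda}}$; as $\varphi(\phi_{\overline{\lambda}})=0$ this yields $\varphi(u)=\varphi_\lambda\bigl((D-\overline{\lambda}I)u\bigr)$, whence $\mathrm{order}(\varphi)\le\mathrm{order}(\varphi_\lambda)+1$ and the order is exactly $N-1$.

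For (ii), note that $\lambda\in\mathbb{C}\setminus\overline{\sigma(D|\mathcal{J})}$ is precisely $\overline{\lambda}\notin\sigma(D|\mathcal{J})$, so every zero of $\mathcal{W}\varphi$ in that region is admissible in (i). I would select distinct such zeros $\lambda_1,\lambda_2,\dots$ (infinitely many exist by hypothesis) and apply (i) repeatedly: the $i$-th step produces a functional in $\mathcal{J}^{\perp}$ with transform $\mathcal{W}\varphi/\prod_{k\le i}(z-\lambda_k)$, lowers the order by one, and keeps $\lambda_{i+1},\lambda_{i+2},\dots$ as zeros of the new transform. After $N=\mathrm{order}(\varphi)$ steps the resulting $\psi\in\mathcal{J}^{\perp}$ has order $0$, so by the Riesz representation (order $0$ means continuity in $\|\cdot\|_H$) it equals $\langle\cdot,y\rangle$ for some $y\in H$. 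Finally $\psi\neq 0$, since $\mathcal{W}\psi=\mathcal{W}\varphi/\prod_k(z-\lambda_k)\not\equiv 0$: indeed $\mathcal{W}\varphi\not\equiv 0$ because $\mathcal{W}$ is injective on $(C^\infty(D))'$ by density of $\{\phi_z\}$ (Proposition \ref{prop:density}(i)), and therefore $y\neq 0$.

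The step I expect to require the most care is the exactness of the order reduction in (i): the inequality $\mathrm{order}(\varphi_\lambda)\le N-1$ is routine bookkeeping, but pinning the order down to exactly $N-1$ rests on the reverse identity $\varphi=\varphi_\lambda\circ(D-\overline{\lambda}I)$. This exactness is exactly what I will need in (ii) to guarantee that the iteration terminates after precisely $N$ applications at an order-zero, and hence $H$-representable, functional.
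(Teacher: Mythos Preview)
Your proof is correct and follows essentially the same route as the paper: the resolvent identity $Rx=\ell(x)\phi_{\overline{\lambda}}+V(I-\overline{\lambda}V)^{-1}x$ to show $\varphi_\lambda\in\mathcal{J}^\perp$, the formula $V(I-\overline{\lambda}V)^{-1}\phi_z=(\phi_z-\phi_{\overline{\lambda}})/(z-\overline{\lambda})$ to compute $\mathcal{W}\varphi_\lambda$, and iteration for (ii). Your treatment is in fact more explicit than the paper's, which simply asserts that the order drops by one ``by the form of $\varphi$'' and that (ii) follows by repeated application; your reverse identity $\varphi=\varphi_\lambda\circ(D-\overline{\lambda}I)$ (valid because $\varphi(\phi_{\overline{\lambda}})=0$) is exactly what is needed to pin the order to $N-1$ rather than merely $\le N-1$.
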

	\begin{proof}
		If $\overline{\lambda} \notin \sigma(D|\mathcal{J})$ 
	 the resolvent $R=(D-\overline{\lambda}I|\mathcal{J})^{-1}$  has the form
		\begin{equation*}
			Rx = \ell(x)\phi_{\overline{\lambda}} + V(I-\overline{\lambda}V)^{-1}x,
		\end{equation*} with $\ell$ a continuous linear functional on $C^\infty(D)$.
		Since $ \mathcal{W}\varphi(\lambda) = 0$ it follows that $$\varphi_{\lambda}(x) = \varphi(V(I-\overline{\lambda}V)^{-1}x)=\varphi(Rx) = 0,$$ for all $x \in \mathcal{J}$.  By the form of $\varphi$ it follows easily that $\varphi_{\lambda}$ has order $N-1$ if $N>0$. Moreover, 
				\begin{equation*}
			\mathcal{W}\varphi_{\lambda}(z) = \varphi(V(I-\overline{\lambda}V)^{-1}\phi_{z})^{\#} = \varphi((z-\overline{\lambda})^{-1}(\phi_{z} - \phi_{\overline{\lambda}}))^{\#} = \frac{\mathcal{W}\varphi(z)}{z-\lambda}.
		\end{equation*}
	(ii)  follows by a repeated application of (i).	
	\end{proof}

  \subsection{Residual subspaces and their annihilators}\label{annihilator_residual}
 As pointed out before, we are especially interested in $D-$invariant subspaces $\mathcal{J}$ with $\sigma(D|\mathcal{J})=\emptyset$. Such subspaces will be called \emph{residual}. They possess an alternative characterization based on the following simple observation.
 \begin{lemma}\label{p(D)-closed} If  $\mathcal{J} \in \mathscr{J}$ and $p$ is a polynomial then $p(D)\mathcal{J}$  is closed.
\end{lemma}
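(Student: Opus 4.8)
The plan is to reduce the statement to a single linear factor and then exhibit the image as an intersection of two closed sets. Factoring $p(z) = c\prod_{i=1}^{k}(z-\mu_{i})$ over $\mathbb{C}$, I would argue by induction on $k$. The inductive hypothesis to carry is not merely that $(D-\mu I)\mathcal{J}$ is closed, but that it again belongs to $\mathscr{J}$, i.e. is a closed \emph{$D$-invariant} subspace. Invariance is immediate from $D(D-\mu I)=(D-\mu I)D$ together with $D\mathcal{J}\subset\mathcal{J}$, so that $D\bigl[(D-\mu I)\mathcal{J}\bigr]=(D-\mu I)\bigl[D\mathcal{J}\bigr]\subset(D-\mu I)\mathcal{J}$. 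Granting this, the factors of $p(D)$ may be applied one at a time, each image staying inside $\mathscr{J}$, and the final image is closed; scaling by $c$ is a homeomorphism and the degenerate cases ($\deg p=0$ or $p\equiv 0$) are trivial. Thus everything rests on the single assertion that $(D-\mu I)\mathcal{J}$ is closed.

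For one factor I would use the quasi-nilpotent compact right inverse $W_{\mu}=V(I-\mu V)^{-1}$ of $D-\mu I$, which satisfies $(D-\mu I)W_{\mu}=I$ and acts continuously on $C^{\infty}(D)$; this operator already appears in Example 1 of Examples \ref{ex_spectra} and in Proposition \ref{lemma:annihilator}. The central observation is the identity
\begin{equation*}
(D-\mu I)\mathcal{J} = \mathcal{J}\cap W_{\mu}^{-1}\bigl(\mathcal{J}+\mathbb{C}\phi_{\mu}\bigr).
\end{equation*}
For the inclusion $\subseteq$, if $y=(D-\mu I)x$ with $x\in\mathcal{J}$ then $y\in\mathcal{J}$ by invariance, while $x-W_{\mu}(D-\mu I)x\in\ker(D-\mu I)=\mathbb{C}\phi_{\mu}$ by Lemma \ref{one-dim ker}, so $W_{\mu}y=x-c\phi_{\mu}\in\mathcal{J}+\mathbb{C}\phi_{\mu}$ for some $c\in\mathbb{C}$. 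For $\supseteq$, if $y\in\mathcal{J}$ and $W_{\mu}y=w+c\phi_{\mu}$ with $w\in\mathcal{J}$, applying $D-\mu I$ and using $(D-\mu I)\phi_{\mu}=0$ together with $(D-\mu I)W_{\mu}=I$ gives $y=(D-\mu I)w\in(D-\mu I)\mathcal{J}$.

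With the identity in hand the conclusion is formal: $\mathcal{J}+\mathbb{C}\phi_{\mu}$ is the sum of a closed subspace and a finite-dimensional subspace of the Fréchet space $C^{\infty}(D)$, hence closed (a standard fact in any Hausdorff topological vector space, seen by passing to the quotient $C^{\infty}(D)/\mathcal{J}$ where the image of $\phi_{\mu}$ spans a finite-dimensional, thus closed, subspace); its preimage under the continuous operator $W_{\mu}$ is closed; and intersecting with the closed subspace $\mathcal{J}$ yields a closed set. The step I expect to require the most care is confirming that $W_{\mu}$ maps $C^{\infty}(D)$ continuously into itself: this I would obtain from the fact that $V$ is quasi-nilpotent on each $\mathscr{D}(D^{n})$ (Remark \ref{compact_inclusion}), so the Neumann series for $(I-\mu V)^{-1}$ converges in each $\mathscr{D}(D^{n})$ and defines a continuous operator on the projective limit $C^{\infty}(D)$, after which composition with $V$ only improves regularity.
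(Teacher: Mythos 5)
Your argument is correct, and it reaches the conclusion by a genuinely different route than the paper. Both proofs hinge on the same two structural facts: the operator $W_{\mu}=V(I-\mu V)^{-1}$ is a right inverse of $D-\mu I$ acting continuously on $C^{\infty}(D)$, and every solution of $(D-\mu I)x=y$ differs from $W_{\mu}y$ by a multiple of $\phi_{\mu}$. The paper turns this into a sequential argument: given $x_k\in\mathcal{J}$ with $(D-\lambda I)x_k$ convergent, it writes $x_k-c_k\phi_{\lambda}=V(I-\lambda V)^{-1}(D-\lambda I)x_k$ and then splits into cases --- if $\phi_{\lambda}\in\mathcal{J}$ the limit is already in $\mathcal{J}$, and if $\phi_{\lambda}\notin\mathcal{J}$ a functional $\varphi\in\mathcal{J}^{\perp}$ with $\varphi(\phi_{\lambda})=1$ is applied to force convergence of the scalars $c_k$. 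You instead package the same algebra into the case-free set identity $(D-\mu I)\mathcal{J}=\mathcal{J}\cap W_{\mu}^{-1}(\mathcal{J}+\mathbb{C}\phi_{\mu})$ and conclude from general topology, using that a closed subspace plus a finite-dimensional one is closed in a Hausdorff topological vector space; your two inclusions check out, and your justification of the continuity of $W_{\mu}$ on $C^{\infty}(D)$ (quasi-nilpotence of $V$ on each $\mathscr{D}(D^{n})$, hence convergence of the Neumann series on the projective limit) is exactly what the paper relies on implicitly in Examples \ref{ex_spectra}. What each approach buys: the paper's version is self-contained and needs no auxiliary fact about sums with finite-dimensional subspaces, while yours eliminates the case distinction on whether $\phi_{\mu}\in\mathcal{J}$ and the choice of a separating functional, and it makes explicit the $D$-invariance of $(D-\mu I)\mathcal{J}$ needed to iterate over the linear factors of $p$ --- a point the paper passes over when it reduces to a single factor.
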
 
 \begin{proof} It suffices to show that $(D-\lambda I)\mathcal{J},~\lambda\in \mathbb{C}$ is closed. Let $(x_k)$ be a sequence in $\mathcal{J}$ such that $(D-\lambda I) x_k\to y \in H$. Then there is a sequence $(c_k)$ in $\mathbb{C}$ such that 
 	\begin{equation}\label{c_k}V(I-\lambda V)^{-1}(D-\lambda I)x_k=x_k-c_k\phi_\lambda\to
 	V(I-\lambda V)^{-1}y,\end{equation}
 when $k\to\infty$. If $\phi_\lambda\in \mathcal{J}$, it follows that $(x_k)$ converges to $x\in \mathcal{J}$ and $y=(D-\lambda I)x\in (D-\lambda I)\mathcal{J}$. If $\phi_\lambda\notin \mathcal{J}$, let $\varphi$ be a continuous linear functional in $ \mathcal{J}^\perp$ with $\varphi(\phi_\lambda)=1$. Then applying $\varphi$ to both sides of \eqref{c_k} we obtain that $(c_k)$ converges and as above, we obtain that $(x_k)$ converges to $x\in \mathcal{J}$ and $y=(D-\lambda I)x\in (D-\lambda I)\mathcal{J}$.
\end{proof}
With the lemma in hand, the characterization of residual subspaces is as follows.
  \begin{proposition}\label{char-residual}
    A subspace $\mathcal{J} \in \mathscr{J}$ is residual if and only if $$\mathcal{J}= \bigcap\{p(D)\mathcal{J}:~p\text{ polynomial}\}.$$
\end{proposition}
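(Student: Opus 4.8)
The plan is to convert the stated identity into a pure surjectivity statement about the operators $(D-\lambda I)|\mathcal{J}$, and then read the spectrum off from it. First I would record the elementary facts that, because $\mathcal{J}$ is $D$-invariant, $p(D)\mathcal{J}\subseteq\mathcal{J}$ for every polynomial $p$, that each $p(D)\mathcal{J}$ is closed by Lemma \ref{p(D)-closed}, and that the constant polynomial already returns $\mathcal{J}$ itself. Hence $\bigcap_p p(D)\mathcal{J}\subseteq\mathcal{J}$ always, and equality holds exactly when $p(D)\mathcal{J}=\mathcal{J}$ for every nonzero $p$. Factoring $p$ into linear factors reduces this to the single condition $(D-\lambda I)\mathcal{J}=\mathcal{J}$ for all $\lambda\in\mathbb{C}$, i.e. surjectivity of each $(D-\lambda I)|\mathcal{J}$. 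So the whole proposition becomes: $\sigma(D|\mathcal{J})=\emptyset$ if and only if every $(D-\lambda I)|\mathcal{J}$ is onto.

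The forward implication is then immediate: if $\mathcal{J}$ is residual, each $(D-\lambda I)|\mathcal{J}$ is invertible, in particular surjective, so the reduction above gives $\mathcal{J}=\bigcap_p p(D)\mathcal{J}$.

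The converse is the substance. Assuming every $(D-\lambda I)|\mathcal{J}$ is surjective, suppose toward a contradiction that $\sigma(D|\mathcal{J})\neq\emptyset$ and pick $\lambda_{0}\in\sigma(D|\mathcal{J})$. Since the restriction is onto but not invertible (continuity of the inverse being automatic on the Fréchet space $\mathcal{J}$), it must fail to be injective, so $\phi_{\lambda_{0}}\in\mathcal{J}$ and $\lambda_{0}$ is an eigenvalue. As long as $\sigma(D|\mathcal{J})\neq\mathbb{C}$, Theorem \ref{thm:spectrum} guarantees that $\lambda_{0}$ has spectral multiplicity one. But surjectivity produces some $x\in\mathcal{J}$ with $(D-\lambda_{0}I)x=\phi_{\lambda_{0}}$, and the solution formula \eqref{D_I)x=y} forces $x=c\phi_{\lambda_{0}}+V(I-\lambda_{0}V)^{-1}\phi_{\lambda_{0}}=c\phi_{\lambda_{0}}+\Phi'(\lambda_{0})$, where $\Phi'(\lambda_{0})$ is the derivative of the eigenvector map from Corollary \ref{V|D(D^n)}. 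Thus $\Phi'(\lambda_{0})\in\mathcal{J}$, and $\{\phi_{\lambda_{0}},\Phi'(\lambda_{0})\}$ spans a two-dimensional $(D-\lambda_{0}I)$-invariant Jordan block inside $\mathcal{J}$, contradicting multiplicity one. Hence, whenever $\sigma(D|\mathcal{J})\neq\mathbb{C}$, no eigenvalue can occur and $\mathcal{J}$ is residual.

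The case $\sigma(D|\mathcal{J})=\mathbb{C}$ is the genuinely delicate one, and I expect it to be the main obstacle, precisely because Theorem \ref{thm:spectrum} no longer supplies multiplicity one. The argument above still shows that surjectivity forces every $\lambda$ to be an eigenvalue, so $\phi_{\lambda}\in\mathcal{J}$ for all $\lambda$, and then Proposition \ref{prop:density}(i) collapses $\mathcal{J}$ to the whole space $C^{\infty}(D)$. But for $\mathcal{J}=C^{\infty}(D)$ every $(D-\lambda I)$ is onto (e.g.\ $x=Vy$ solves $Dx=y$) while $\sigma=\mathbb{C}$, so the whole space satisfies the intersection identity without being residual. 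I would therefore either restrict the statement to proper $\mathcal{J}$ or single out $C^{\infty}(D)$ as the sole exception; in all other cases the dichotomy of Theorem \ref{thm:spectrum} combined with the Jordan-block argument completes the proof.
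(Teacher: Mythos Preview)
Your reduction to surjectivity of each $(D-\lambda I)|\mathcal{J}$ and the forward implication are exactly as in the paper. For the converse, however, you take a detour through Theorem~\ref{thm:spectrum} and its multiplicity statement, splitting into the cases $\sigma\neq\mathbb{C}$ and $\sigma=\mathbb{C}$. The paper's argument is more direct and avoids both the case split and the appeal to multiplicity: once some $(D-\lambda_0 I)|\mathcal{J}$ fails to be injective, so that $\phi_{\lambda_0}\in\mathcal{J}$, one simply iterates surjectivity to obtain, for every $j\ge0$, an $x_j\in\mathcal{J}$ with $(D-\lambda_0 I)^j x_j=\phi_{\lambda_0}$; the solution formula then forces $[V(I-\lambda_0 V)^{-1}]^j\phi_{\lambda_0}\in\mathcal{J}$ for all $j$, and Proposition~\ref{prop:density}(ii) gives $\mathcal{J}=C^\infty(D)$ at once. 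In other words, rather than stopping at a single Jordan vector and invoking multiplicity one, the paper climbs the entire Jordan chain at $\lambda_0$ and uses its density. Your route is valid provided ``spectral multiplicity one'' in Theorem~\ref{thm:spectrum} is read as a statement about generalized eigenspaces (which is the natural reading, since geometric multiplicity one is automatic), but note that the proof of Theorem~\ref{thm:spectrum} as written does not actually establish that claim, so you are leaning on something not fully justified there.

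You are right to flag the whole space as an exception: $C^\infty(D)$ satisfies the intersection identity (each $(D-\lambda I)$ is onto via $V(I-\lambda V)^{-1}$) yet has $\sigma=\mathbb{C}$, so the proposition as stated is literally false for $\mathcal{J}=C^\infty(D)$. The paper's proof ends at ``$\mathcal{J}=C^\infty(D)$'' and calls it a contradiction without comment; your explicit observation that one must restrict to proper $\mathcal{J}$ is a genuine correction.
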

    \begin{proof}    	 
      If $\sigma(D|\mathcal{J}) = \emptyset$ we have $(D-\lambda)\mathcal{J} = \mathcal{J}$ for all $\lambda$ and the equality in the statement holds. Conversely, if the equality holds, then $(D-\lambda I)\mathcal{J} = \mathcal{J}$ for each $\lambda$. Suppose for a contradiction that $D-\lambda_{0}I$ is not injective for some $\lambda_{0}$. Then for each $j \geq 0$ there exists a nonzero $x_{j} \in \mathcal{J}$, such that $(D-\lambda_{0})^{j}x_{j} = \phi_{\lambda_{0}}$. From this it follows that $V_{\lambda_{0}}^{j}\phi_{\lambda_{0}} \in \mathcal{J}$ for all $j \geq 0$, and $\mathcal{J} = C^{\infty}(D)$ by Proposition \ref{prop:density} (ii).
    \end{proof}
 Note that for every subspace $\mathcal{J} \in \mathscr{J}$ we can consider its \emph{residual part} 
$$\mathcal{J}_{res}= \bigcap\{p(D)\mathcal{J}:~p\text{ polynomial}\}.$$
By the above proposition   this is a closed residual subspace of $\mathcal{J}$

Let us now turn to annihilators of residual subspaces. It turns out that these spaces can be described using the generalized Fourier transform $\cW$. \\
To   state our result we  use the same notation as in Theorem \ref{theorem:space}, that is 
$\mathcal{W}H= e^{-i\alpha z}\mathcal{H}(E)$, with $\alpha\in \mathbb{R}$. 

\begin{theorem}\label{thm:annihilator}
	Let $\mathcal{J} \in \mathscr{J}$ be residual. Then:\\	
		\begin{enumerate}[(i)]
		\item If for all $0\ne\varphi\in \cJ^\perp$, $\cW\varphi$ has finitely many zeros in $\bC$,   there exists 
		 $\beta\in \mathbb{R}$ with $\lvert \beta + \alpha \rvert \leq \tau(E)$
		such that 
	\begin{equation*}
		\mathcal{W}\mathcal{J}^{\perp} =   \left\{p e^{i\beta z} :~p \text{ polynomial}\right\},	
\end{equation*}	
\item If there exists $0\ne\varphi\in \cJ^\perp$ such that $\cW\varphi$ has infinitely many zeros in $\bC$ then there exists a HB function $F$ such that  $\mathcal{H}(F)\in$ Chain$(\mathcal{H}(E))$  and a  compact interval $J\subset [-\alpha-\tau(E),-\alpha+\tau(E)]$, of length $2\tau(F)$ and with midpoint  $\sigma$
 such that	
	\begin{equation*}
		\mathcal{\cW}\mathcal{J}^{\perp} = \left\{pg:~g\in e^{i\sigma z}\mathcal{H}(F),\, p \text{ polynomial}\right\}\cup  \left\{q e^{i\beta z} : \beta\in J,\,q \text{ polynomial}\right\}.
	\end{equation*}
\item  The real number $\beta$ from (i), the interval $J$ as well as the set $\left\{pg:~g\in e^{i\sigma z}\mathcal{H}(F),\, p \text{ polynomial}\right\}$
from (ii) are uniquely determined by the residual subspace $\cJ$.\\
\item Every subspace of $(C^\infty(D))'$ of the form described in (i) or (ii) is the annihilator of a residual subspace of $C^\infty(D)$. 
		\end{enumerate}
	\end{theorem}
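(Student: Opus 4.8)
The statement is the converse to parts (i) and (ii), so the plan is as follows. Fix a subspace $\Phi\subset(C^\infty(D))'$ whose image $\cW\Phi$ equals one of the two sets described, and put $\cJ={}^\perp\Phi=\{x\in C^\infty(D):\varphi(x)=0\text{ for all }\varphi\in\Phi\}$. Once $\Phi$ is known to be weak-star closed, the bipolar theorem yields $\cJ^\perp=\Phi$, and it only remains to verify that $\cJ$ is a $D$-invariant residual subspace. Two transform identities organize the argument. Writing $D'\varphi=\varphi\circ D$ for the transpose of $D$, the extension formula \eqref{ext_Fourier} gives $\cW(D'\varphi)(z)=z\,\cW\varphi(z)$, so that $D'$-invariance of $\Phi$ is the same as invariance of $\cW\Phi$ under multiplication by $z$. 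Moreover, exactly as in Proposition \ref{lemma:annihilator}, the functional $x\mapsto\varphi(V(I-\mu V)^{-1}x)$ has transform $(\cW\varphi(z)-\cW\varphi(\overline\mu))/(z-\overline\mu)$. Recall also that $\cW$ is injective on $(C^\infty(D))'$, since $\{\phi_z\}$ spans a dense subspace by Proposition \ref{prop:density}.

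Granting weak-star closedness, the two remaining points are short. In both forms, multiplication by $z$ sends a member of $\cW\Phi$ to another member (absorb the factor $z$ into the polynomial), so $\cW\Phi$ is $M_z$-invariant and therefore $\cJ\in\mathscr{J}$. For residuality I invoke Proposition \ref{char-residual}: it suffices that $(D-\mu)\cJ=\cJ$ for every $\mu\in\bC$. As $(D-\mu)\cJ$ is closed (Lemma \ref{p(D)-closed}) and $\cJ^\perp=\Phi$, this is equivalent to the annihilator identity $((D-\mu)\cJ)^\perp=\{\varphi:(z-\mu)\cW\varphi\in\cW\Phi\}=\Phi$, whose inclusion $\supseteq$ is the $M_z$-invariance just noted. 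The reverse inclusion is a division property of the set $\cW\Phi$: if $h$ is entire and $(z-\mu)h\in\cW\Phi$ then $h\in\cW\Phi$. This I check on each piece: if $(z-\mu)h=pe^{i\beta z}$ then $p(\mu)=0$ and $h=(p/(z-\mu))e^{i\beta z}$; if $(z-\mu)h=pg$ with $g=e^{i\sigma z}f$, $f\in\cH(F)$, then $p(\mu)f(\mu)=0$, and when $f(\mu)=0$ the standard division property of de Branges spaces gives $f/(z-\mu)\in\cH(F)$, so $h=p\,e^{i\sigma z}f/(z-\mu)$ again lies in $\cW\Phi$. By injectivity of $\cW$ this gives the required inclusion, hence $(D-\mu)\cJ=\cJ$ for all $\mu$, and $\cJ$ is residual.

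The main obstacle is the weak-star closedness of $\Phi$. By the Krein--Smulian theorem (Proposition \ref{K_S}) it suffices to prove that $\Phi\cap B(0,r)^\circ$ is weak-star sequentially closed, and by Proposition \ref{bounded order} a weak-star convergent sequence in $\Phi$ has order bounded by a fixed $N$; I may therefore work in the Banach space $(\mD(D^N))'$, on which $\cW$ has range $\{\sum_{j=0}^N z^jg_j:g_j\in e^{-i\alpha z}\cH(E)\}$. The heart of the matter is a degree bound: if $p\,e^{i\beta z}=\sum_{j=0}^N z^jg_j$ lies in this range, then dividing the equivalent identity $p\,e^{i(\alpha+\beta)z}=\sum_{j=0}^N z^jf_j$ (with $f_j\in\cH(E)$) by $(z+i)^NE$ and using that $z^j/(z+i)^N$ is bounded on $\bC_+$ for $j\le N$ shows $p\,e^{i(\alpha+\beta)z}/[(z+i)^NE]\in H^2(\bC_+)$, hence $\int|p(t)|^2(1+t^2)^{-N}|E(t)|^{-2}\,dt<\infty$; since $E$ is regular this can hold only for $\deg p$ bounded in terms of $N$. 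The same estimate bounds the degrees of the polynomial factors in both pieces of form (ii). With the degrees bounded, the intersection of $\cW\Phi$ with the range on level $N$ is a finite sum of the norm-closed spaces $z^je^{i\sigma z}\cH(F)$ together with the set $\{q\,e^{i\beta z}:\beta\in J,\ \deg q\le d\}$, the latter closed because $J$ is compact and point evaluations are continuous. A weak-star convergent sequence in $\Phi$ then converges locally uniformly by normality, and the limit stays in this set; this yields the closedness and completes the proof.
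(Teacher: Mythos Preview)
Your treatment of $D$-invariance and residuality is correct and actually more explicit than the paper's: the paper reduces (iv) entirely to weak-star closedness of $\Phi$ and says nothing further, whereas you verify via the division property that $(D-\mu)\cJ=\cJ$ for every $\mu$, which together with Proposition~\ref{char-residual} gives residuality. This part is fine.

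The weak-star closedness argument, however, has two gaps. First, the degree bound. From $p\,e^{i(\alpha+\beta)z}=\sum_{j\le N} z^j f_j$ with $f_j\in\cH(E)$ you correctly obtain $\int |p(t)|^2(1+t^2)^{-N}|E(t)|^{-2}\,dt<\infty$, but the sentence ``since $E$ is regular this can hold only for $\deg p$ bounded in terms of $N$'' is not justified. Regularity only says $\int (1+t^2)^{-1}|E(t)|^{-2}\,dt<\infty$; it does not rule out $|E(t)|$ growing faster than any polynomial, in which case the displayed integral is finite for polynomials of arbitrarily large degree. Second, ``a finite sum of the norm-closed spaces $z^j e^{i\sigma z}\cH(F)$'' is not automatically closed (sums of closed subspaces need not be closed), and in any case it is weak-star closedness in $(\mD(D^N))'$ that is required, not norm-closedness of the image.

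The paper handles the de~Branges piece of case (ii) by a different device that sidesteps both problems. Given $\varphi_n\to\varphi$ weak-star with order $\le N$ and $\cW\varphi_n\in\{pg:g\in e^{i\sigma z}\cH(F)\}$, fix any $\beta\in J$ and pass to $\psi_n=\varphi_n\circ V_\beta^{\,N}$. Since $V_\beta^{\,N}:H\to\mD(D^N)$ is bounded, each $\psi_n$ has order $0$; a direct computation (using $V_\beta\phi_z=(\phi_z-e^{-i\beta z}\phi_0)/z$) gives the transform identity
\[
\cW(\varphi\circ V_\beta)(z)=\frac{\cW\varphi(z)-\cW\varphi(0)\,e^{i\beta z}}{z},
\]
so $\cW\psi_n=L_\beta^{\,N}(\cW\varphi_n)$. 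Because $e^{i\sigma z}\cH(F)$ is $L_\beta$-invariant for $\beta\in J$, one gets $\cW\psi_n\in e^{i\sigma z}\cH(F)$. Weak-star convergence of $\varphi_n$ in $(\mD(D^N))'$ gives weak convergence of $\psi_n$ in $H$, and $e^{i\sigma z}\cH(F)$ is weakly closed in $\cW H$, so $\cW(\varphi\circ V_\beta^{\,N})\in e^{i\sigma z}\cH(F)$. Unwinding, $\cW\varphi=z^N\cdot\cW(\varphi\circ V_\beta^{\,N})+q\,e^{i\beta z}$ for a polynomial $q$ of degree $<N$, which lies in $\cW\Phi$. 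No degree bound and no sum-of-subspaces argument is needed. For the exponential piece the paper also asserts a degree bound (``degrees of $p_n$ are bounded by $N$''); your argument there is at the same level of detail as the paper's, but the point is that in case (ii) the exponential terms are absorbed by the $V_\beta^{\,N}$ computation anyway.

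In short: keep your residuality argument, but replace the degree-bound/``finite sum of closed spaces'' sketch by the $V_\beta^{\,N}$ reduction to order-$0$ functionals.
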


\begin{proof}  We begin with some simple observations regarding functionals $\varphi_\beta\in (C^\infty(D))'$ with $\cW\varphi_\beta(z)=e^{i\beta z}$ for some fixed $\beta\in \bC$ and $z\in \bC$.
	
	a) The parameter $\beta$ must be real and satisfy $\beta\in [-\alpha-\tau(E),-\alpha+\tau(E)]$, since by \eqref{clf} $$\cW\varphi_\beta(z)=\sum_{j=0}^n z^j\cW x_j(z),$$
	with $\cW x_j\in \cW H=e^{i\alpha z}\cH(E)$.
	
	b) For every $\beta\in [-\alpha-\tau(E),-\alpha+\tau(E)]$, $\varphi_\beta$ has order at most one. Indeed,   by a) and Proposition \ref{quasi-nilpotent-ri} the operator $V_\beta$ is a bounded right inverse of $D$ on $H$. If we fix 	
 $x_0\in H$ with $\cW x_0(0)=1$ we have 
 	$$ \cW x_0-e^{i\beta z}=\langle V^*_\beta x_0, D\phi_{\overline{z}}\rangle.$$
Then by the density of $\{\phi_z: z\in \bC\}$ in $C^\infty(D)$ we obtain
	$$\varphi_\beta(x)=\langle x, x_0\rangle-\langle Dx,  V^*_\beta x_0\rangle,\quad x\in C^\infty(D).$$
Even if this will only be used later,  note that since $\|V_\beta\|$ are uniformly bounded in $\beta$, the same holds true for the norms of $\varphi_\beta$ as functionals on $\mathscr{D}(D)$.	
Now let $\cJ$ be residual and $\varphi\in \cJ^\perp$ be such that the entire function of finite exponential type $\cW\varphi$ has finitely many zeros in $\bC$.	Then 
 $\mathcal{W}\varphi(z)=p(z)e^{i\beta z},\quad z\in \mathbb{C},$
	where $p$ is a polynomial and $\beta\in \mathbb{C}.$ By a repeated application of  Proposition \ref{lemma:annihilator} (i)  together with observation a) above,	it follows that 
	$\beta\in [-\alpha-\tau(E),-\alpha+\tau(E)]$, $\varphi_\beta\in \cJ^\perp$ and $\varphi=\varphi_\beta\circ p(D)$. 
	
	If all 
$\varphi\in \cJ^\perp$ satisfy this condition, that is, under the assumption in (i),  $\beta$ is uniquely determined by $\cJ$. Indeed, if  $\varphi_\beta,\varphi\gamma\in \cJ^\perp$ then $\cW\varphi_\beta-\cW\varphi_\gamma$ either has infinitely many zeros in $\bC$, or $\beta=\gamma$. This proves (i) and the corresponding uniqueness assertion in (iii).

Let us now assume that there exists $0\ne \varphi\in \cJ^\perp$ such that $\cW\varphi$ has infinitely many zeros. Another repeated application of  Proposition \ref{lemma:annihilator} (i)  shows that $\cJ^\perp$ contains nonzero functionals of order zero, i.e. the closure $\cJ_H$ of $\cJ$ in $H$ is strictly contained in $H$. 	
	Then $$\mathcal{N}=\{\cW y:~y\in \mathcal{J}_H^\perp\},$$
	is a nonzero nearly invariant subspace of $\cW H$ without common zeros. By Corollary \ref{subspace:deBranges} we have  $$\mathcal{N}=e^{i\sigma z}\cH(F),$$
	with $\mathcal{H}(F)\in$ Chain$(\mathcal{H}(E))$ and $$|\sigma+\alpha|\le \tau(E)-\tau(F),$$
Thus, the first set displayed in (ii) coincides with the set of $\varphi\in \cJ^\perp$ such that $\cW\varphi$ has infinitely many zeros and as such, it is uniquely determined by $\cJ$. The remaining set of functionals in $\cJ^\perp$ is, of course, uniquely determined by $\cJ$ as well.  It consists of functionals $\varphi\in \cJ^\perp$ such that $\cW\varphi$ has finitely many zeros. By the discussion at the beginning of the proof $\cW\varphi=qe^{i\beta z}$, where $q$ is a polynomial and $\beta\in [-\alpha-\tau(E),-\alpha+\tau(E)]$. Note that since $\cW\cJ^\perp$ is invariant under multiplication by polynomials, $\varphi_\beta\in \cJ^\perp$ whenever $e^{i(\beta-\sigma) z}$ is associated to $\cH(F)$, that is, whenever $|\beta-\sigma|\le\tau(F)$. Conversely, if $\varphi_\gamma\in \cJ^\perp$ then $\cW(\varphi_\gamma-\varphi\sigma)=e^{i\gamma z}-e^{i\sigma z}$ has infinitely many zeros in $\bC$. By observation b) above this immediately implies that $e^{i(\gamma-\sigma) z}$  is associated to $\cH(F)$, hence $|\gamma-\sigma|\le\tau(F)$. 
The proof of (i)-(iii) is now complete.\\ To see (iv) we need to show that any subspace described in (i) or (ii) is weak-star closed. By Proposition \ref{K_S}  we need to show that if $(\varphi_n)$ is a weak-star convergent sequence in such a subspace and also in $B^\circ(0,r)$ for some $r>0$, then its weak-star limit $\varphi$ lies in the subspace as well.\\
Assume first that for infinitely many $n$ we have 
$$\cW\varphi_n(z)=p_n(z)e^{i\beta_n z},$$
with $p_n$ polynomials and either $\beta_n=\beta$, or $\beta_n\in J$ for all such $n$. Then by Proposition \ref{bounded order}	it follows that the degrees of $p_n$ are bounded by $N$ for all $n$ and by passing to a subsequence, we may assume that $\beta_n\to \beta_0$ in the second  case. From the pointwise convergence of $p_ne^{i\beta_n z}$ we infer that $(p_n)$ is pointwise convergent to a polynomial of degree at most $N$ and the assertion follows. 

The remaining case is when there exists $n_0\in \mathbb{N}$ such that $\cW\varphi_n$ has infinitely many zeros for all $n\ge n_0$ and converges weak-star to $\varphi$.  In this case, the subspace in question has the form displayed in (ii) with some HB function $F$ and some interval $J=[\sigma-\tau(F),\sigma+\tau(F)]$.
Again by	Proposition \ref{bounded order},  there exists $N\in \mathbb{N}$ such that each $\varphi_n$ has order at most $N$, 
hence for $n\ge n_0$ 
we have $\cW\varphi_n(z)=f_n(z)\sum_{k=0}^N p_{kn}z^k,$
with $f_n\in e^{i\sigma z}\cH(F)$, or equivalently,
$$\varphi_n(x)=\sum_{k=0}^Np_{kn}\langle D^k x, y_n\rangle_H,$$
with $\cW y_n=f_n\in  e^{i\sigma z}\cH(F)$. Fix $\beta\in J$ and write
$$\varphi_n\circ V_\beta^N(x)=\left\langle x, \sum_{k=0}^Np_{kn} (V_\beta^*)^{N-k}y_n\right\rangle_H.$$
Since  $e^{i\sigma z}\cH(F)$ is invariant for $V_\beta^*$, this implies $\cW\varphi_n\circ V_\beta^N\in  e^{i\sigma z}\cH(F)$.
If $(\varphi_n)$ converges weak-star to $\varphi$ in $(C^\infty(D))'$, then by Proposition \ref{bounded order} we obtain that $\cW\varphi\circ V_\beta^N\in  e^{i\sigma z}\cH(F)$ as well. By a direct computation we obtain 
$$\cW\varphi\circ V_\beta^N =\frac{\cW\varphi-q e^{i\beta z}}{z^N}$$
with  $q$ a polynomial, that is,
$$\cW\varphi=\cW\varphi\circ V_\beta^N\circ D^N =z^N\cW\varphi\circ V_\beta^N =\cW\varphi-q e^{i\beta z}$$
and the proof of (iv) follows.
\end{proof}		

The singleton $\{\beta\}$  from (i), or the interval $J$ from (ii) which may reduce to a point as well, will be called \emph{the residual interval} of the residual subspace  $\cJ$.

 A somewhat  surprising aspect is that the first set displayed in (ii) does not necessarily determine the de Branges space $\mathcal{H}(F)$ uniquely. This is a more subtle matter which we now address. The issue is related to the well known fact that (see \cite{MR0229011}) in de Branges spaces, $\mathscr{D}(M_z)$, the domain of the operator  $M_z$ of multiplication by the independent variable may fail to be  dense in the space. In order to state our result we recall the correspondence between de Branges spaces and canonical systems, respectively Hamiltonians described in Section \ref{sec:prelim}. Let us denote by $\bH_E$ the Hamiltonian associated to $\cH(E)$ that way.
 \begin{proposition}\label{non_unique} Let $E$ be a regular HB function. The following are equivalent:\\
 	(i) There exist HB functions $F,G$ with $\cH(F),\cH(G)\in \text{\rm Chain}(\cH(E))$ with
 	$$\left\{ pg : g \in 
 			\mathcal{H}(F), p \text{ polynomial} \right\} = \left\{ pg : g \in \mathcal{H}(G), p \text{ polynomial} \right\},
 		$$
 	(ii) There exists a HB function $G$ with $\cH(G)\in \text{\rm Chain}(\cH(E))$ such that $\mathscr{D}(M_z|\cH(G))$ is not dense in $\cH(G)$,\\
 	(iii) $\bH_E$ has singular intervals. 
 \end{proposition}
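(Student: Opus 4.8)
The proof rests on the dictionary between regular de Branges spaces and canonical systems recalled in Section~\ref{sec:prelim}, together with de Branges' analysis of multiplication by the independent variable. The one external ingredient I would isolate first is the following dichotomy (de Branges \cite{MR0229011}): for a regular de Branges space $\mathcal{H}(G)$, writing $G=A_G+iB_G$ with $A_G,B_G$ real entire, the domain $\mathscr{D}(M_z\,|\,\mathcal{H}(G))$ is dense unless some real phase function $S_\phi=A_G\cos\phi+B_G\sin\phi$ belongs to $\mathcal{H}(G)$, in which case $S_\phi$ spans the (one-dimensional) orthogonal complement of $\overline{\mathscr{D}(M_z\,|\,\mathcal{H}(G))}$. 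At most one such $\phi$ (mod $\pi$) can occur, since two of them would force $G\in\mathcal{H}(G)$, which is impossible.

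The plan is to prove (iii)$\Rightarrow$(i) and (iii)$\Rightarrow$(ii) by an explicit computation on a singular interval, and then to close the cycle with the converses (i)$\Rightarrow$(iii) and (ii)$\Rightarrow$(iii) by a type argument. First I would fix a maximal singular interval $(a,b)$ of $\bH_E$ with direction $e_\phi$, so that both endpoints are regular and $\mathcal{H}(E_a),\mathcal{H}(E_b)\in\text{Chain}(\mathcal{H}(E))$. On $(a,b)$ the canonical system reads $\partial_s\Theta=-z\,\Omega\bH\,\Theta=-z\,(\Omega e_\phi)\langle\Theta,e_\phi\rangle$; since $e_\phi^{T}\Omega e_\phi=0$, the scalar $u:=\langle\Theta,e_\phi\rangle$ is constant in $s$ (the \emph{frozen} function) while the orthogonal coordinate grows linearly, giving
\begin{equation*}
\Theta(b)=\Theta(a)-(b-a)\,z\,u\,\Omega e_\phi .
\end{equation*}
Thus $E_b$ differs from $E_a$ only by a fixed multiple of $zu$. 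Since $A_{E_a}=\Theta_+(a)$ and $B_{E_a}=\Theta_-(a)$, the frozen function is exactly the phase function $u=S_\phi$ of $\mathcal{H}(E_a)$; a look at the reproducing kernels (equivalently, the description of indivisible intervals in \cite{MR3890099,MR0229011}) shows $u\in\mathcal{H}(E_a)$ while $zu\notin\mathcal{H}(E_a)$, and that $\mathcal{H}(E_b)=\mathcal{H}(E_a)\oplus\mathbb{C}\,zu$. The first two facts give (ii) for $G=E_a$. For (iii)$\Rightarrow$(i), since $u\in\mathcal{H}(E_a)$ we have $zu\in\{p g:g\in\mathcal{H}(E_a),\,p\text{ polynomial}\}$, whence $\mathcal{H}(E_b)$ lies in the polynomial span of $\mathcal{H}(E_a)$; as the reverse inclusion is trivial, $F=E_a$ and $G=E_b$ witness (i).

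For the converses I would argue by exponential type. If (i) holds with $\mathcal{H}(F)\subsetneq\mathcal{H}(G)$ (using the ordering of Theorem~\ref{ordering_thm}), then every element of $\mathcal{H}(G)$ is a polynomial combination of elements of $\mathcal{H}(F)$; since multiplication by a polynomial does not raise exponential type and $\mathcal{H}(G)$ contains a reproducing kernel $k_\mu^{G}$ with $\tau(k_\mu^{G})=\tau(G)$, Remark~\ref{remark:type} forces $\tau(G)\le\tau(F)$, hence $\tau(F)=\tau(G)$. Writing $F=E_{s_1}$, $G=E_{s_2}$ with $s_1<s_2$, the Krein--de Branges formula \eqref{eq:krein_de_branges_formula_prelim} gives $\int_{s_1}^{s_2}\sqrt{\det\bH}\,dx=0$, i.e. $\det\bH=0$ a.e. on $(s_1,s_2)$. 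Similarly, if (ii) holds then $S_\phi\in\mathcal{H}(G)$ with $zS_\phi\notin\mathcal{H}(G)$, so $\mathcal{H}(G)\oplus\mathbb{C}\,zS_\phi$ is a strictly larger chain member of the same type, and again $\det\bH$ must vanish on an intervening parameter interval.

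The deduction of an honest singular interval from the vanishing of $\det\bH$ together with the strict growth of the chain is the step I expect to be the main obstacle, and it is where the structure theory of canonical systems is genuinely needed: on a set where $\det\bH=0$ the Hamiltonian has the rank-one form $e_{\phi(x)}e_{\phi(x)}^{T}$, but a singular interval requires the direction $\phi(x)$ to be \emph{constant} on a subinterval. The content to be supplied, citing \cite{MR3890099,https://doi.org/10.48550/arxiv.1408.6022,MR0229011}, is that the de Branges chain does not grow across a region where $\det\bH=0$ and $\phi$ varies on every subinterval; consequently the strict inclusion $\mathcal{H}(E_{s_1})\subsetneq\mathcal{H}(E_{s_2})$ (respectively the failure of density of $M_z$) can only be produced by an interval on which $\phi$ is constant, that is, a genuine singular interval. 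Combined with the forward implications, this yields (i)$\Leftrightarrow$(iii) and (ii)$\Leftrightarrow$(iii), hence the full equivalence.
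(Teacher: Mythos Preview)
Your forward direction (iii)$\Rightarrow$(i),(ii) via the explicit transfer-matrix computation across a singular interval is correct and standard.

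The genuine gap is in your backward direction. From hypothesis (i) you extract only the consequence $\tau(F)=\tau(G)$, hence $\det\bH_E=0$ a.e.\ on $(s_1,s_2)$, and then assert that the chain cannot grow across such a region unless the direction $\phi$ is constant on a subinterval. This last claim is false. Take $\bH(x)=e_{\phi(x)}e_{\phi(x)}^{T}$ with $\phi(x)=x$ on $(0,\ell)$: then $\det\bH\equiv 0$, the direction is constant on no subinterval, so there are \emph{no} singular intervals, every point is regular, and the map $s\mapsto\mathcal{H}(E_s)$ is strictly increasing because the underlying Hilbert spaces $L^2((0,s);\bH)$ grow strictly with $s$. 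Thus $\tau(F)=\tau(G)$ together with $\mathcal{H}(F)\subsetneq\mathcal{H}(G)$ does not force (iii); by passing to types you have thrown away exactly the part of (i) that matters. The same applies to your (ii)$\Rightarrow$(iii): once you replace the data by ``two chain members of equal type'', the false claim is again the only thing left to appeal to.

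The paper sidesteps this by proving (i)$\Rightarrow$(ii) directly in the chain, without the Hamiltonian. Assuming $\mathcal{H}(F)\subsetneq\mathcal{H}(G)$, write $f=pg\in\mathcal{H}(G)\setminus\mathcal{H}(F)$ with $g\in\mathcal{H}(F)$ and $p$ of \emph{minimal} degree, and set $\mathcal{H}(G_1)=\{h\in\mathcal{H}(G):Lh\in\mathcal{H}(F)\}$, which is a de Branges subspace; minimality of $\deg p$ forces $\mathcal{H}(F)\subsetneq\mathcal{H}(G_1)$. For $h\in\mathscr{D}(M_z\,|\,\mathcal{H}(G_1))$ one has $zh\in\mathcal{H}(G_1)$, so $h=L(zh)\in\mathcal{H}(F)$; hence $\mathscr{D}(M_z\,|\,\mathcal{H}(G_1))\subset\mathcal{H}(F)$ is not dense in $\mathcal{H}(G_1)$, giving (ii). The equivalence (ii)$\Leftrightarrow$(iii) is then simply quoted (Lemma~7 of \cite{MR4507623}). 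Note how this argument genuinely uses the polynomial-span hypothesis (through the minimal-degree trick), not merely its type-theoretic shadow.
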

\begin{proof} 
	(ii) $\Leftrightarrow$ (iii) is Lemma 7 in \cite{MR4507623}. If (ii) holds use the axiomatic definition of de Branges spaces to conclude that $\overline{\mathscr{D}(M_z|\cH(G))}$ is a de Branges subspace of $\cH(G)$, hence it belongs to $\text{Chain}(\cH(E))$ 
and  the equality in (i) holds with $\cH(F)=\overline{\mathscr{D}(M_z|\cH(G))}$.
Indeed, since
$$\frac{f-(f/g)(\la)g}{z-\la}\in \cH(F),\quad f,g\in \cH(G),\, g(\la)\ne 0,$$
it follows that $f,g\in \left\{ pg : g \in 
\mathcal{H}(F), p \text{ polynomial} \right\}$ which proves the claim. Conversely, if (i) holds we can assume without loss that $\cH(F)\subset\cH(G)$.
Let $f \in \mathcal{H}(G)$ with $f \notin \mathcal{H}(F)$. By assumption there exists a polynomial $p$ and $g \in \mathcal{H}(F)$, such that $f = gp$. Hence $f/p = g \in \mathcal{H}(F)$. If we choose the polynomial $p$ to have minimal degree, then $zf/p \notin \mathcal{H}(F)$. Thus, if $L$ denotes  the backward shift on $\cH(E)$, \begin{equation*}
	\mathcal{H}(F) \subsetneq \left\{ f \in \mathcal{H}(G) : Lf \in \mathcal{H}(F)\right\}.
\end{equation*}
Using again the axiomatic characterization of de Branges spaces it follows easily that the right hand side is a  de Branges subspace of $\cH(E)$ which we denote  by $\cH(G_1)$. If $g_0\cH(G_1)\cap \cH(F)^\perp\setminus\{0\}$ and $h\in \mathscr{D}(M_z|\cH(G_1))$, then $$\langle h,g_0\rangle=\langle Lzh, g_0\rangle =0,$$
because $Lzh\in \cH(F)$. This shows that $\mathscr{D}(M_z|\cH(G_1))$ is not dense in $\cH(G_1)$ and the proof is complete.		
		\end{proof}

 \subsection{General form of residual subspaces}
 A direct application of Theorem \ref{thm:annihilator} is the following description of residual subspaces which is a refined version of Theorem \ref{1st_residual}. The result also shows that the situation described in 1) of Examples \ref{ex_spectra},
 $$\cJ=\cJ_M=\{x\in C^\infty(D):~D^jx\in M\},$$  is generic. Here
 $M\subset H$, or $M\subset \mathscr{D}(D^n),~n\ge 1$, or $C^\infty(D)$ is a closed $V_\beta-$invariant subspace for some $\beta$ with $|\beta+\alpha|\le \tau(E)$. Recall that given a residual subspace $\cJ \subset C^\infty(D)$, we have denoted by $\cJ_H$ its closure in $H$
 \begin{theorem}\label{residual_general} If $\cJ \subset C^\infty(D)$ is residual then $$\cJ=\cJ_M,$$
 	where $M=V_{\beta}C^\infty(D)$ for some $\beta\in [-\alpha-\tau(E),-\alpha+\tau(E)]$, if $\cJ_H=H$ and $M=\cJ_H$ if $\cJ_H\ne H$. In this case, with the notations in Theorem \ref{thm:annihilator} we have  $J_H=\cH(F)^\perp$  which is $V_\beta-$invariant for all $\beta\in I$.	In all cases, for $\beta$ as above and $\lambda\in \bC$ we have $((D-\lambda I)|\cJ)^{-1}=V_\beta(1-\lambda V_\beta)^{-1}$.
 	\end{theorem}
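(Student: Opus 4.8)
The plan is to prove the resolvent identity first, since the two representations of $\mathcal{J}$ both fall out of it. Write $R_\lambda=((D-\lambda I)|\mathcal{J})^{-1}$, which exists and is continuous because $\mathcal{J}$ is residual. I would fix $\beta$ as in Theorem \ref{thm:annihilator}: the unique $\beta$ in case (i), or any $\beta\in J$ in case (ii). The heart of the matter is to show $V_\beta\mathcal{J}\subseteq\mathcal{J}$. Since a direct computation (as in the proof of Theorem \ref{thm:annihilator}) gives $\mathcal{W}(\varphi\circ V_\beta)=L_\beta\mathcal{W}\varphi$, where $L_\beta f=\frac{f(z)-f(0)e^{i\beta z}}{z}$, this is equivalent to $\mathcal{W}\mathcal{J}^\perp$ being invariant under $L_\beta$, which in turn gives $\varphi\circ V_\beta\in\mathcal{J}^\perp$ and hence $\varphi(V_\beta x)=0$ for $x\in\mathcal{J}$. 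I would verify $L_\beta$-invariance on the generators from Theorem \ref{thm:annihilator}: for $pe^{i\beta z}$ one gets $\frac{p-p(0)}{z}e^{i\beta z}$; for $pg$ with $g\in e^{i\sigma z}\mathcal{H}(F)$ one splits $L_\beta(pg)=\frac{p-p(0)}{z}g+p(0)\frac{g-g(0)e^{i\beta z}}{z}$, the first term being of the same type and the second lying in $e^{i\sigma z}\mathcal{H}(F)$ since $e^{i(\beta-\sigma)z}$ is associated to $\mathcal{H}(F)$ by Proposition \ref{exp-associated} (as $|\beta-\sigma|\le\tau(F)$); the cross terms $\frac{e^{i\beta' z}-e^{i\beta z}}{z}$ lie in $\mathcal{W}\mathcal{J}^\perp$ by applying Proposition \ref{lemma:annihilator}(i) at $\lambda=0$ to $\varphi_{\beta'}-\varphi_\beta\in\mathcal{J}^\perp$. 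Once $V_\beta\mathcal{J}\subseteq\mathcal{J}$, the series $(I-\lambda V_\beta)^{-1}=\sum_n\lambda^n V_\beta^n$ converges on $\mathcal{J}$ by quasinilpotency (Remark \ref{compact_inclusion}), so $V_\beta(I-\lambda V_\beta)^{-1}$ maps $\mathcal{J}$ into itself and is a right inverse of $D-\lambda I$; as $\phi_\lambda\notin\mathcal{J}$, it must equal $R_\lambda$, giving the resolvent formula.

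Next I would prove $\mathcal{J}=\mathcal{J}_M$ by two inclusions, splitting on whether $\mathcal{J}_H=H$. In the dense case (which is case (i)) set $M=V_\beta C^\infty(D)$. For $\mathcal{J}\subseteq M$: if $x\in\mathcal{J}$ then $Dx\in\mathcal{J}$ and $R_0(Dx)=x$, i.e.\ $V_\beta(Dx)=x$, so $x\in V_\beta C^\infty(D)=M$; together with $D^j\mathcal{J}\subseteq\mathcal{J}\subseteq M$ this yields $\mathcal{J}\subseteq\mathcal{J}_M$. For $\mathcal{J}_M\subseteq\mathcal{J}$: every $\varphi\in\mathcal{J}^\perp$ has the form $\varphi_\beta\circ q(D)$ for a polynomial $q$, and for $x\in\mathcal{J}_M$ we have $q(D)x\in M=V_\beta C^\infty(D)$, say $q(D)x=V_\beta w$, whence $\varphi(x)=\varphi_\beta(V_\beta w)=0$ because $\mathcal{W}(\varphi_\beta\circ V_\beta)=L_\beta e^{i\beta z}=0$. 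Since $\mathcal{J}$ is closed this gives $x\in\mathcal{J}$.

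In the non-dense case (case (ii)) set $M=\mathcal{J}_H$. From $V_\beta\mathcal{J}\subseteq\mathcal{J}$ for every $\beta\in J$ I get $V_\beta\mathcal{J}_H\subseteq\mathcal{J}_H$, so $\mathcal{J}_H$ is $V_\beta$-invariant and Corollary \ref{v_beta_inv} identifies $\mathcal{W}\mathcal{J}_H=(e^{i\sigma z}\mathcal{H}(F))^\perp$. The inclusion $\mathcal{J}\subseteq\mathcal{J}_M$ is immediate from $D^j\mathcal{J}\subseteq\mathcal{J}\subseteq\mathcal{J}_H=M$. For $\mathcal{J}_M\subseteq\mathcal{J}$ I would show each $\varphi\in\mathcal{J}^\perp$ annihilates $\mathcal{J}_M$, handling the two families of Theorem \ref{thm:annihilator}(ii). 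A functional with $\mathcal{W}\varphi=pg$, $g\in e^{i\sigma z}\mathcal{H}(F)=\mathcal{W}\mathcal{J}_H^\perp$, can be represented as $\sum_k c_k\langle D^k\cdot,y_k\rangle$ with all $y_k\in\mathcal{J}_H^\perp$, so $\varphi(x)=\sum_k c_k\langle D^k x,y_k\rangle=0$ as $D^k x\in\mathcal{J}_H$. A functional $\varphi_{\beta'}\circ q(D)$ with $\beta'\in J$ is handled by choosing, using that $\mathcal{H}(F)\neq\{0\}$ does not vanish identically at $0$, a vector $x_0\in\mathcal{J}_H^\perp$ with $\langle x_0,\phi_0\rangle=1$; then the representation $\varphi_{\beta'}(u)=\langle u,x_0\rangle-\langle Du,V_{\beta'}^*x_0\rangle$ (observation (b) in the proof of Theorem \ref{thm:annihilator}, valid for any such $x_0$), applied to $u=q(D)x\in\mathcal{J}_M\subseteq\mathcal{J}_H$, gives $\varphi(x)=0$, because $u$ and $Du$ lie in $\mathcal{J}_H$ while $x_0$ and $V_{\beta'}^*x_0$ lie in the $V_{\beta'}^*$-invariant space $\mathcal{J}_H^\perp$.

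I expect the main obstacle to be the first step, proving $V_\beta\mathcal{J}\subseteq\mathcal{J}$: one must pin down the correct $\beta$ and check $L_\beta$-invariance of $\mathcal{W}\mathcal{J}^\perp$ on every generator, the delicate point being precisely the cross terms $\frac{e^{i\beta' z}-e^{i\beta z}}{z}$ and $\frac{g-g(0)e^{i\beta z}}{z}$, where one must invoke the associated-function property (Proposition \ref{exp-associated}) together with Proposition \ref{lemma:annihilator}(i) to keep everything inside $\mathcal{W}\mathcal{J}^\perp$. The remaining inclusions are then comparatively routine consequences of the resolvent formula and the annihilator description.
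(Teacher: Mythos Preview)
Your argument is correct. The main organizational difference from the paper is that you make the $V_\beta$-invariance of $\mathcal{J}$ itself (via $L_\beta$-invariance of $\mathcal{W}\mathcal{J}^\perp$ checked on the generators of Theorem~\ref{thm:annihilator}) the central step, and then let both the resolvent formula and the two inclusions $\mathcal{J}\subseteq\mathcal{J}_M$, $\mathcal{J}_M\subseteq\mathcal{J}$ follow from it. The paper instead works more structurally: in the dense case it simply matches the annihilator from Theorem~\ref{thm:annihilator}(i) with that of $\mathcal{J}_{V_\beta C^\infty(D)}$ computed from Lemma~\ref{volterra_range}; in the non-dense case it only establishes $V_\beta$-invariance of the \emph{closure} $\mathcal{J}_H$ (via $L_\beta$-invariance of $e^{i\sigma z}\mathcal{H}(F)$ alone), forms $\mathcal{J}_{\mathcal{J}_H}$, and then shows $\mathcal{J}_{\mathcal{J}_H}^\perp\supseteq\mathcal{J}^\perp$ by arguing that $\mathcal{W}\mathcal{J}_{\mathcal{J}_H}^\perp$ already contains both families in Theorem~\ref{thm:annihilator}(ii). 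The resolvent identity is treated as a byproduct. Your route is more computational and yields the stronger intermediate fact that $\mathcal{J}$ (not just $\mathcal{J}_H$) is $V_\beta$-invariant; the paper's route is terser and avoids the case-by-case verification of $L_\beta$-invariance on all generator types. Both rely on the same ingredients: Theorem~\ref{thm:annihilator}, Lemma~\ref{volterra_range}, Proposition~\ref{exp-associated}, and observation~(b) from the proof of Theorem~\ref{thm:annihilator}.
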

 The proof is based on the following simple observation.
 \begin{lemma}\label{volterra_range}
 If $|\beta+\alpha|\le\tau(E)$ then $V_\beta C^\infty(D)$ is closed in $C^\infty(D)$ and every $\varphi\in (V_\beta C^\infty(D))^\perp$ satisfies
 $$\cW\varphi(z)=ce^{i\beta z},$$
 for some fixed $c\in \bC$ and all $z\in \bC$.
\end{lemma}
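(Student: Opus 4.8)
The plan is to prove the two assertions separately: first the closedness of $V_\beta C^\infty(D)$, which is quick, and then the description of its annihilator, which is the substantial point.

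For closedness I would exploit that $V_\beta$ acts \emph{continuously} on $C^\infty(D)$ (every right inverse of $D$ does, as noted after \eqref{c_infty}, and $C^\infty(D)$ is invariant by Remark \ref{compact_inclusion}) and that $DV_\beta=I$ persists on $C^\infty(D)$. Then $T:=I-V_\beta D$ is a continuous linear map on the Fréchet space $C^\infty(D)$, and $DV_\beta=I$ gives $T^2=T$, so $T$ is a continuous projection. Since $D(Tx)=Dx-DV_\beta Dx=0$, the range of $T$ lies in $\ker D=\bC\phi_0$, while $Tx=0$ is equivalent to $x=V_\beta(Dx)\in V_\beta C^\infty(D)$, and conversely every element of $V_\beta C^\infty(D)$ is fixed by $V_\beta D$. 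Hence $V_\beta C^\infty(D)=\ker T$, which is closed as the kernel of a continuous map (in fact $C^\infty(D)=V_\beta C^\infty(D)\oplus\bC\phi_0$).

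For the annihilator the key identity I would establish is
$$\cW(\varphi\circ V_\beta)(z)=\frac{\cW\varphi(z)-\cW\varphi(0)\,e^{i\beta z}}{z},\qquad z\in\bC,$$
for every $\varphi\in(C^\infty(D))'$; this is the ``dual'' incarnation of the rank-one perturbed backward shift $L_\beta f(z)=(f(z)-f(0)e^{i\beta z})/z$ furnished by \eqref{Vx} and Proposition \ref{quasi-nilpotent-ri}. To derive it I would write $\varphi(x)=\sum_{j=0}^N\langle D^j x,y_j\rangle$ via \eqref{clf}, use $D^jV_\beta=D^{j-1}$ for $j\ge1$ together with $\cW V_\beta^*\cW^{-1}=L_\beta$ on the $j=0$ term, and recombine the remaining terms through $\cW\varphi(z)=\sum_j z^j\cW y_j(z)$ from \eqref{ext_Fourier}. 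A short reindexing telescopes everything into the displayed quotient, with $\cW\varphi(0)=\cW y_0(0)$ the only value surviving at $z=0$.

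With this identity in hand the conclusion is immediate. Since $\{\phi_z:z\in\bC\}$ has dense span in $C^\infty(D)$ by Proposition \ref{prop:density}(i), the extended transform $\cW$ is injective on $(C^\infty(D))'$; thus $\varphi\in(V_\beta C^\infty(D))^\perp$, i.e.\ $\varphi\circ V_\beta=0$, holds if and only if $\cW(\varphi\circ V_\beta)\equiv0$, which by the identity means exactly $\cW\varphi(z)=\cW\varphi(0)\,e^{i\beta z}$. Setting $c=\cW\varphi(0)$ gives the claim. I expect the main obstacle to be the clean derivation of the displayed identity: one must transport $V_\beta$ to the model through $V_\beta^*$ and account for the order reduction forced by $DV_\beta=I$, after which the exponential $e^{i\beta z}$ emerges automatically from the shape of $L_\beta$, the hypothesis $|\beta+\alpha|\le\tau(E)$ being precisely what makes $V_\beta$ available in the first place.
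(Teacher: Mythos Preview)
Your proposal is correct and follows essentially the same route as the paper. For closedness the paper simply invokes continuity of $D$ and $V_\beta$ together with $DV_\beta=I$; your projection $T=I-V_\beta D$ makes this explicit. For the annihilator the paper evaluates $\varphi(V_\beta\phi_{\overline\lambda})$ directly from the representation \eqref{clf} and obtains the same expression you derive, only without isolating the identity $\cW(\varphi\circ V_\beta)(z)=(\cW\varphi(z)-\cW\varphi(0)e^{i\beta z})/z$ as a separate statement; the underlying computation is identical.
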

 \begin{proof} The fact that $V_\beta C^\infty(D)$ is closed in $C^\infty(D)$ follows immediately since both operators $D,V_\beta$ are continuous on this space and satisfy $DV_\beta=I$. To see the second part, let $\varphi\in (V_\beta C^\infty(D))^\perp$ and write
 	$$\varphi(x)=\sum_{j=0}^n \langle D^jx,y_j\rangle,\quad x\in C^\infty(D).$$  From $\varphi(V_\beta\phi_{\overline{\lambda}})=0$, $\lambda\in \bC$ we obtain for all $\lambda\in \bC$
 	$$0=\frac{\cW y_0(\lambda)-\cW y_0(0)e^{i\beta\lambda}}{\lambda}+\sum_{j=1}^n\lambda^{j-1}\cW y_j(\lambda),$$
 	which implies $\cW\varphi(\lambda)=\cW y_0(0)e^{i\beta\lambda},~\lambda\in \bC$. 
\end{proof}
 
\begin{proof}[Proof of Theorem \ref{residual_general}] If $\cJ$ is dense in $H$ then the result follows immediately from  Theorem \ref{thm:annihilator} (i) and Lemma \ref{volterra_range}. The $V_\beta-$ invariance, as well as the formula for the resolvent of $D|\cJ$ are straightforward.
		
	If $\cJ_H\ne H$, use Theorem \ref{thm:annihilator} (ii) and the notations therein to conclude that the image by $\cW$ of the orthogonal complement of $\cJ_H$ equals $e^{i\sigma z}\cH(F)$ which is $\cW V_\beta^*\cW^{-1}-$invariant for all $\beta\in \mathbb{R}$ such that $e^{i(\beta-\sigma) z}$ is associated to $\cH(F)$, or equivalently for all $\beta\in J$. Then $\cJ_H$ is $V_\beta-$invariant for all such $\beta$. In particular, it makes sense to consider the residual subspace $\cJ_{\cJ_H}$ of $C^\infty(D)$ which obviously contains $\cJ$.
	To see the reverse inclusion observe  that  the functionals of order zero in the annihilators of $\cJ$ and $\cJ_{\cJ_H}$ coincide with the orthogonal complement of $\cJ_H$ in $H$.  Thus, by $D-$invariance, $\cW\cJ_{\cJ_H}^\perp$  contains all polynomial multiples of functions in  $e^{i\sigma z}\cH(F)$ together with all polynomial multiples of the exponential functions  $e^{i\beta z}$,  such that $e^{i(\beta-\sigma) z}$ is associated to $\cH(F)$, or equivalently the set 
	$\{pe^{i\beta z}:~\beta\in J,~p\text{ polynomial}\}$. Consequently,  by Theorem \ref{thm:annihilator} (ii) $\cW\cJ_{\cJ_H}^\perp$ contains $\cW\cJ^\perp$ and the equality 
	$\cJ=\cJ_{\cJ_H}$ follows. Again,  the $V_\beta$-invariance, for $\beta\in J$ and the formula for the resolvent of $D|\cJ$ are straightforward.
	\end{proof}
 
 We should point out that in most cases the examples of residual subspaces revealed by the theorem are not trivial. This happens whenever  the interval $J$ given in Theorem \ref{thm:annihilator} (ii) is strictly contained in $[-\alpha-\tau(E),-\alpha+\tau(E)]$. Even if $\tau(E)=0$ and all these intervals reduce to a point, the space $J$ is nontrivial whenever the assumption in Theorem \ref{thm:annihilator} (i) holds, or when the space $\cH(F)$ from (ii) is strictly contained in $\cH(E)$.
 
 \subsection{Some special cases} 
 It is interesting to compare the description of residual subspaces in this general context with 
 the classical case when $D=-i\frac{d}{dx}$ on $C^\infty(a,b)$. Recall from \cite{MR2419491}
 (see also the Introduction) that in this case the residual subspaces consist of functions which vanish on a fixed compact subinterval of $(a,b)$. The interval may reduce to a point in which case we require that all derivatives of the functions in the subspace vanish at that point as well. In order to give an interpretation of this result in the general case considered here, we need to find natural  analogues to zeros of $C^\infty-$functions.  
 Theorem \ref{residual_general} suggest the following two alternatives:\\
 1)  $x\in C^\infty(D)$ ''vanishes'' at  $\beta\in  [-\alpha-\tau(E),-\alpha+\tau(E)]$  if $\varphi_\beta(x)=0$, or equivalently, by Lemma \ref{volterra_range}, $x\in V_\beta C^\infty(D)$. Recall that $\varphi_\beta$ is defined by $\cW\varphi_\beta(z)=e^{i\beta z}$.\\
 2) $x\in C^\infty(D)$ ''vanishes'' on the compact interval  $ J\subset [-\alpha-\tau(E),-\alpha+\tau(E)]$ if for all $n\ge 0$, $D^nx$ belongs to the $V_\beta-$ invariant subspace $(e^{i\sigma z}\cH(F))^\perp$, where $\sigma$ is the midpoint of $J$,  $2\tau(F)$ is its length and $\beta$ is an arbitrary point in $J$. 
  
  The second alternative is not ambiguous by the uniqueness statement in Theorem \ref{thm:annihilator} (iii). However, the condition is 
  less intuitive than, for example, the obvious generalization of 1):\\
  2') $x\in C^\infty(D)$ ''vanishes'' on the compact interval  $J \subset [-\alpha-\tau(E),-\alpha+\tau(E)]$ if $x\in \cap_{\beta\in J} V_\beta C^\infty(D)$, or equivalently, $\varphi_\beta(x)=0, \beta\in J$.
  
  By  Theorem \ref{thm:annihilator} (ii) it follows that 2) implies 2') in general. In the classical case the two conditions coincide due to the special structure of Volterra invariant subspaces of $L^2(a,b)$.  
  The next result clarifies completely the condition 2') in the most general case.
   
  \begin{theorem}\label{zero-based} Let $J\subset [-\alpha-\tau(E),-\alpha+\tau(E)]$ be a compact interval with non-void interior. Then $$\cJ_0=\bigcap_{\beta\in J}V_\beta C^\infty(D),$$
  	 is the largest residual subspace whose residual interval equals $J$.
\end{theorem}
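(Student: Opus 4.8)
The plan is to work entirely with the functionals $\varphi_\beta$, $\beta\in[-\alpha-\tau(E),-\alpha+\tau(E)]$, determined by $\mathcal{W}\varphi_\beta(z)=e^{i\beta z}$, and to exploit the identity $V_\beta C^\infty(D)=\ker\varphi_\beta$. This identity is immediate from Lemma \ref{volterra_range}: since $(V_\beta C^\infty(D))^\perp$ consists exactly of the functionals with $\mathcal{W}\varphi=c\,e^{i\beta z}$, it is the one-dimensional span of $\varphi_\beta$, and $V_\beta C^\infty(D)$, being closed, equals its pre-annihilator $\ker\varphi_\beta$. Consequently $\mathcal{J}_0=\bigcap_{\beta\in J}\ker\varphi_\beta$ is a closed subspace, and the \emph{largest}-assertion is already settled: if $\mathcal{K}$ is any residual subspace whose residual interval is $J$, then by Theorem \ref{thm:annihilator}(ii) its annihilator contains $\{q e^{i\beta z}:\beta\in J\}$, so $\varphi_\beta\in\mathcal{K}^\perp$ for every $\beta\in J$, whence $\mathcal{K}\subseteq\ker\varphi_\beta=V_\beta C^\infty(D)$ for all such $\beta$ and thus $\mathcal{K}\subseteq\mathcal{J}_0$. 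It remains to show that $\mathcal{J}_0$ is itself residual with residual interval exactly $J$.

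For $D$-invariance I would fix $x\in C^\infty(D)$ and study $\beta\mapsto\varphi_\beta(x)$. Writing $\varphi_\beta$ in the order-one form from observation b) in the proof of Theorem \ref{thm:annihilator}, the $\beta$-dependence sits in the single term $\langle Dx,x_\beta\rangle$; because $x\in C^\infty(D)$ forces $\mathcal{W}x$ and all its polynomial multiples to decay rapidly, the difference quotients $h^{-1}(\varphi_{\beta+h}-\varphi_\beta)(x)$ converge, for each such fixed $x$, to the value at $x$ of the functional with transform $\tfrac{d}{d\beta}e^{i\beta z}=iz\,e^{i\beta z}=i\,\mathcal{W}(\varphi_\beta\circ D)$. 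Thus $\tfrac{d}{d\beta}\varphi_\beta(x)=i\,\varphi_\beta(Dx)$. If $x\in\mathcal{J}_0$ then $\varphi_\beta(x)\equiv0$ on $J$, so the derivative vanishes on the interior of $J$, giving $\varphi_\beta(Dx)=0$ there and, by continuity of $\beta\mapsto\varphi_\beta(Dx)$, on all of $J$; hence $Dx\in\mathcal{J}_0$ and $\mathcal{J}_0\in\mathscr{J}$.

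The heart of the argument is residuality, which I would obtain from $V_\beta$-invariance. For $\gamma,\gamma'\in J$ define the weak integral $\Phi(x)=\int_{\gamma}^{\gamma'}\varphi_\beta(x)\,d\beta$; this is a continuous functional that annihilates $\mathcal{J}_0$ (the integrand vanishes there), and its transform is $\int_{\gamma}^{\gamma'}e^{i\beta z}\,d\beta=(iz)^{-1}(e^{i\gamma'z}-e^{i\gamma z})$, which lies in $\mathcal{W}H$. Hence $\Phi$ is the order-zero functional given by a multiple of $x_\gamma-x_{\gamma'}\in H$, so $\langle y,x_\gamma-x_{\gamma'}\rangle=0$, i.e. $\gamma\mapsto\langle y,x_\gamma\rangle$ is constant on $J$, for every $y\in\mathcal{J}_0$. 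Fixing $\beta\in J$ and $y\in\mathcal{J}_0$, a direct computation using $DV_\beta y=y$ and $\varphi_\beta(V_\beta y)=0$ gives $\varphi_\gamma(V_\beta y)=\langle y,x_\beta-x_\gamma\rangle=0$ for all $\gamma\in J$, so $V_\beta y\in\mathcal{J}_0$. Since $V_\beta|\mathcal{J}_0$ is quasi-nilpotent and $\phi_\lambda\notin\mathcal{J}_0$ for every $\lambda$ (because $\mathcal{W}\varphi_\beta(\lambda)=e^{i\beta\lambda}\neq0$), the operator $V_\beta(I-\lambda V_\beta)^{-1}$ is a well-defined resolvent for $(D-\lambda)|\mathcal{J}_0$; thus $\sigma(D|\mathcal{J}_0)=\emptyset$ and $\mathcal{J}_0$ is residual.

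Finally I would pin down the residual interval $J'$ of $\mathcal{J}_0$. Since $\varphi_\beta\in\mathcal{J}_0^\perp$ for $\beta\in J$ and a zero-free exponential can only occur in the exponential part of the annihilator, Theorem \ref{thm:annihilator}(ii) gives $J\subseteq J'$; the nonzero order-zero functionals $x_\gamma-x_{\gamma'}$ also place us in case (ii). For the reverse inclusion, when $|J|<2\tau(E)$ I would use Lemma \ref{lemma:subspace_of_given_type} to produce a regular $F$ with $\mathcal{H}(F)\in\mathrm{Chain}(\mathcal{H}(E))$ and $\tau(F)=|J|/2$, and then Theorem \ref{thm:annihilator}(iv) to obtain a residual $\mathcal{K}_0$ with $\mathcal{W}\big((\mathcal{K}_0)_H^{\perp}\big)=e^{imz}\mathcal{H}(F)$ ($m$ the midpoint of $J$) and residual interval exactly $J$. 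By the largest-assertion $\mathcal{K}_0\subseteq\mathcal{J}_0$, whence $\mathcal{W}\big((\mathcal{J}_0)_H^{\perp}\big)\subseteq e^{imz}\mathcal{H}(F)$, and Corollary \ref{subspace:deBranges} forces the shift and type of $\mathcal{W}\big((\mathcal{J}_0)_H^{\perp}\big)$ to satisfy $J'\subseteq J$; the boundary case $|J|=2\tau(E)$ is automatic, since every residual interval lies in $[-\alpha-\tau(E),-\alpha+\tau(E)]$. Thus $J'=J$, completing the proof. The main obstacle is the residuality step: justifying the pointwise limits involving $x_\beta$ (whose $\beta$-derivative leaves $H$) and, above all, deducing $V_\beta$-invariance of $\mathcal{J}_0$ from the constancy of $\gamma\mapsto\langle y,x_\gamma\rangle$.
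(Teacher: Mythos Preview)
Your proposal is essentially correct and follows the same strategy as the paper: prove the differentiation formula $\tfrac{d}{d\beta}\varphi_\beta(x)=c\,\varphi_\beta(Dx)$ to obtain $D$-invariance, deduce $V_\beta$-invariance (hence residuality), and then compare with a residual subspace built from some $\mathcal{H}(F)\in\text{Chain}(\mathcal{H}(E))$ with $\tau(F)=|J|/2$ to pin down the residual interval. Your integral-functional computation of $\varphi_\gamma(V_\beta y)=\langle y,x_\beta-x_\gamma\rangle$ is just the fundamental theorem of calculus applied to the differentiation formula (this is what the paper's terse ``a similar argument shows that $\mathcal{J}_0$ is residual'' unpacks to), and your use of $\mathcal{K}_0$ plus Corollary~\ref{subspace:deBranges} is a mild repackaging of the paper's direct verification that $\varphi_\gamma\notin\mathcal{J}_0^\perp$ for $\gamma\notin J$.

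The one genuine gap is precisely where you flag it: the differentiability of $\beta\mapsto\varphi_\beta(x)$. Your justification ``$x\in C^\infty(D)$ forces $\mathcal{W}x$ and all its polynomial multiples to decay rapidly'' is not correct as stated---membership in $C^\infty(D)$ does not give any $L^1$-type decay of $\mathcal{W}(Dx)/|E|^2$ that would allow differentiation under the integral, and indeed $\partial_\beta x_\beta$ leaves $H$. The paper avoids this entirely by a density argument: the formula is immediate for finite linear combinations of $\phi_z$, the functionals $\varphi_\beta$ are uniformly bounded on $\mathscr{D}(D)$ (observation~b)), and a second-order Taylor estimate on this dense set yields
\[
|\varphi_{\beta'}(x)-\varphi_\beta(x)-(\beta'-\beta)\varphi_\beta(Dx)|\le M(\beta'-\beta)^2\|x\|_3,
\]
which passes to all of $C^\infty(D)$. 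Once you replace your ``rapid decay'' sentence by this density step, the rest of your argument goes through without change.
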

\begin{proof} It will be sufficient to prove that $\cJ_0$ is $D-$invariant and for each $\gamma\in [-\alpha-\tau(E),-\alpha+\tau(E)]\setminus I$, $\cJ_0$ is not contained in $V_\gamma C^\infty(D)$. \\Indeed, if the above assertions hold, for $\beta\in I$ and $x=V_\beta y\in \cJ_0$, $V_\beta Dx=V_\beta DV_\beta y=x$, that is $V_\beta=D^{-1}$. A similar argument shows that $\cJ_0$ is residual. The second assertion implies that the residual interval of $\cJ_0$ equals $I$. Finally,  $\cJ_0^\perp$ consists of the closed linear span of functionals of the form $$\varphi(x)=\varphi_\beta(D^nx),\quad n\ge 0,\, x\in C^\infty(D).$$
By Theorem \ref{thm:annihilator} (ii) $\cJ_0^\perp$ is contained in the annihilator of any residual $\cJ$ with residual interval $J$.

Let us now turn to the claims. Since $\cJ_0^\perp$ is the $w^*-$closure of the linear span of $\{\varphi_\beta:~\beta\in I\}$ in the dual of $C^\infty(D)$, to verify $D-$ invariance, it will be sufficient to prove that for fixed $x\in C^\infty(D)$, the function $u_x(\beta)=\varphi_\beta(x)$ is differentiable on $[-\alpha-\tau(E),-\alpha+\tau(E)]$ with 
\begin{equation}\label{diff_ux}\frac{d u_x}{d\beta}(\beta)=u_{Dx}(\beta)=\varphi_\beta(Dx).\end{equation}
To this end, recall from the observation b) in the proof of Theorem \ref{thm:annihilator} that each $\varphi_\beta$ has order at most one and that they form a bounded set of continuous functionals on $\mathscr{D}(D)$.

Now notice that the function $u_x$ is twice continuously differentiable in 
$\beta\in [-\alpha-\tau(E),-\alpha+\tau(E)]$ whenever $x$ is a finite linear combination of $\phi_z,~z\in \bC$ and \eqref{diff_ux} obviously holds, Then for such $x$ and $\beta<\beta'\in \beta\in [-\alpha-\tau(E),-\alpha+\tau(E)]$ we have
$$u_x(\beta')=u_x(\beta)+(\beta'-\beta)\frac{d u_x}{d\beta}(\beta)+
\int_\beta^{\beta'}\int_\beta^t\frac{d^2 u_x}{d\beta^2}(s)dsdt.$$
Using \eqref{diff_ux} and the above argument we arrive at the estimate
$$|\varphi_{\beta'}(x)-\varphi_\beta(x)-\varphi_\beta(Dx)|\le M(\beta'-\beta)\|x\|_3,$$
for $x$ in a dense subset of $C^\infty(D)$. This clearly implies \eqref{diff_ux} for all $x\in C^\infty(D)$, hence also the $D-$invariance of $\cJ_0$.

Finally, let $\gamma\in [-\alpha-\tau(E),-\alpha+\tau(E)]\setminus J$. Then $\text{length}(J)<2\tau(E)$ and by Lemma  \ref{lemma:subspace_of_given_type} there exists a HB function $F$ with $\cH(F) \in \text{Chain}(\cH(E))$, $2\tau(F)=\text{length}(J)$.
Obviously, if $\sigma$ denotes the midpoint of $J$, $e^{i(\gamma-\sigma) z}$ is not associated to $\cH(F)$, hence by Theorem \ref{thm:annihilator} $\varphi_\gamma$ does not belong to $\cJ_M^\perp$, where $M=(e^{i\sigma x}\cH(F))^\perp$. Since this subspace contains the weak-star closure of the linear span of $\{\varphi_\beta:~\beta\in J\}$, the second claim follows and the proof is complete.\end{proof}
  
 There are cases when the  subspace described in the theorem is the unique residual subspace with a given residual interval. For example, this happens when $\text{Chain}(\mathcal{H}(E))$ is \emph{thin} in the sense of \cite{MR3925104}. This means  that the map
 \begin{equation}\label{thin_chain}
 	\tau(t) = \tau(E_{t}),
 \end{equation}
 is injective (and surjective onto $(0,\tau(E)]$). This covers the classical case $D=-i\frac{d}{dx}$, where the chain consists of Paley-Wiener spaces corresponding to  nested intervals, but there are many other examples. For instance, in terms of the Hamiltonian of the associated canonical system, see \S \ref{sec:canonical_systems}, it is easy to decide if a de Branges chain is thin. Indeed, the de Branges chain is not thin if and only if $\det(H(x)) = 0$ on some interval. Chains of de Branges spaces are one of the main corner stones of the theory and have been studied extensively. For some recent developments in the theory of de Branges chains see \cite{MR4507623}.

  In such cases theorem \ref{residual_general} reads as follows.
 \begin{corollary}\label{cor:thin_chain}
 	Assume that $\text{\rm Chain}(\mathcal{H}(E))$ is thin. If $\cJ$ is a residual subspace then there exists a compact interval $J \subset [-\alpha-\tau(E), -\alpha+\tau(E)]$ such that $$\cJ=\bigcap_{\beta\in J}V_\beta C^\infty(D),$$
 	if $J$ has non-trivial interior and if $J=\{\beta\}$ then
 	$$\cJ=\cJ_{V_\beta C^\infty(D)}.$$
\end{corollary}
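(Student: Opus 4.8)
The plan is to combine the complete description of residual subspaces furnished by Theorem \ref{thm:annihilator} with the extremal property from Theorem \ref{zero-based}, invoking thinness only to pin down the de Branges space $\cH(F)$ occurring in the annihilator. I would begin by applying Theorem \ref{thm:annihilator} to an arbitrary residual $\cJ$, which sorts it into exactly one of two mutually exclusive cases: either every $0\neq\varphi\in\cJ^\perp$ has $\cW\varphi$ with finitely many zeros (case (i), which by the proof of that theorem corresponds to $\cJ_H=H$ and $\cW\cJ^\perp=\{p\,e^{i\beta z}:p\text{ polynomial}\}$), or some $\cW\varphi$ has infinitely many zeros (case (ii), corresponding to $\cJ_H\neq H$, with $\cW\cJ^\perp$ also containing the polynomial multiples of $e^{i\sigma z}\cH(F)$ for some $\cH(F)\in\text{Chain}(\cH(E))$). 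In both situations the residual interval $J$ is defined, reducing to the single point $\{\beta\}$ in case (i) and having length $2\tau(F)$ and midpoint $\sigma$ in case (ii).

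Next I would record the clean dichotomy that thinness forces between the two cases. By the definition around \eqref{thin_chain}, the type map $t\mapsto\tau(E_t)$ is a bijection onto $(0,\tau(E)]$, so every nontrivial member of $\text{Chain}(\cH(E))$ has strictly positive type. Hence in case (ii) one always has $\tau(F)>0$, so $J$ has non-trivial interior; equivalently, a single-point residual interval $J=\{\beta\}$ can only arise in case (i). This already settles the single-point alternative of the corollary: case (i) is exactly the situation $\cJ_H=H$, and Theorem \ref{residual_general} then yields $\cJ=\cJ_{V_\beta C^\infty(D)}$ directly, with no further input required.

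It remains to treat the case where $J$ has non-trivial interior, so that $\cJ$ falls under Theorem \ref{thm:annihilator} (ii) and $\cW\cJ^\perp$ is the explicit set determined by $\sigma$, polynomial multiplication, and a space $\cH(F)\in\text{Chain}(\cH(E))$ with $\tau(F)=\tfrac12\,\mathrm{length}(J)$ and $\sigma$ the midpoint of $J$. Here thinness enters decisively: injectivity of the type map on the chain makes $\cH(F)$ the \emph{unique} de Branges subspace of its type, so both $\cH(F)$ and $\sigma$, and therefore the whole set $\cW\cJ^\perp$, are determined solely by $J$. Consequently any two residual subspaces sharing the residual interval $J$ have identical annihilators and must coincide. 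Since Theorem \ref{zero-based} exhibits $\cJ_0=\bigcap_{\beta\in J}V_\beta C^\infty(D)$ as a residual subspace whose residual interval is precisely $J$, this uniqueness gives $\cJ=\cJ_0$, which is the asserted formula.

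The one genuinely load-bearing step — and the sole place thinness is used — is the uniqueness of $\cH(F)$. Without it, the phenomenon of Proposition \ref{non_unique} intervenes: the first set in Theorem \ref{thm:annihilator} (ii) may be representable by more than one space in the chain, the residual interval no longer determines the annihilator, and distinct residual subspaces can share the same $J$ (this is exactly the gap between conditions 2) and 2') discussed after Theorem \ref{zero-based}). Thinness, being equivalent via Proposition \ref{non_unique} to the absence of singular intervals in $\bH_E$, removes this ambiguity and collapses the general description into the two clean formulas of the corollary; the remaining work is the bookkeeping of matching the two cases of Theorem \ref{thm:annihilator} to the two alternatives in the statement and quoting Theorems \ref{residual_general} and \ref{zero-based}.
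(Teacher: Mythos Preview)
Your argument is correct and matches the paper's intended reasoning, which is left entirely implicit there (the paper simply writes ``In such cases Theorem \ref{residual_general} reads as follows'' together with the remark preceding it that thinness forces uniqueness of the residual subspace with a given residual interval). You have supplied exactly the details the paper suppresses: the dichotomy via Theorem \ref{thm:annihilator}, the observation that thinness makes $\tau(F)>0$ so that case (ii) always yields an interval with non-trivial interior, and the decisive use of injectivity of the type map to pin down $\cH(F)$ from $\tau(F)=\tfrac12\,\mathrm{length}(J)$, after which Theorem \ref{zero-based} closes the argument.

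One small caution about your closing commentary: you write that thinness is ``equivalent via Proposition \ref{non_unique} to the absence of singular intervals in $\bH_E$''. Proposition \ref{non_unique} does not itself mention thinness; it gives the equivalence of the non-uniqueness phenomenon with the presence of singular intervals. The link to thinness is the separate observation (stated by the paper just before the corollary, and coming from the Krein--de Branges formula) that the type map fails to be injective exactly when $\det\bH$ vanishes on an interval. This does not affect your proof, since the only fact you actually use is the injectivity of $t\mapsto\tau(E_t)$, which is the definition of thin; but the attribution to Proposition \ref{non_unique} is slightly off.
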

  The opposite situation occurs when $\tau(E)=0$.  Note that in this case $\alpha=0$ Theorem \ref{theorem:near_invariance} part 2, and the only quasi-nilpotent right inverse of $D$ on $H$ is $V=V_0$. The next result gives a complete description of  residual subspaces in this case.
  \begin{corollary}\label{tau=0} Assume that $\tau(E)=0$ and let $\cJ$ be a residual subspace of $C^\infty(D)$. If $\cJ$ is dense in $H$ then $\cJ=\cJ_{VC^\infty(D)}$ and if $\cJ$  is not dense in $H$ then there exist $\cH(F)\in \text{\rm Chain}(\cH(E)$ such that $\cJ=\cJ_{(\cH(F))^\perp}$. In particular, residual subspaces are totally ordered by inclusion.  
  \end{corollary}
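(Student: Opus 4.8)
The plan is to read off the shape of residual subspaces directly from Theorem \ref{residual_general} once $\tau(E)=0$ is imposed, and then to deduce the chain property from the de Branges ordering theorem together with one explicit comparison carried out through the annihilator.

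First I would record what $\tau(E)=0$ forces. By part 2 of Theorem \ref{theorem:near_invariance}, $|\alpha|\le\tau(E)=0$, so $\alpha=0$ and $\cW H=\cH(E)$; the interval $[-\alpha-\tau(E),-\alpha+\tau(E)]$ collapses to $\{0\}$, so Proposition \ref{quasi-nilpotent-ri} admits only $\beta=0$ and $V=V_0$ is the sole quasi-nilpotent right inverse. Every residual interval is therefore $\{0\}$. Feeding this into Theorem \ref{residual_general}: a residual $\cJ$ that is dense in $H$ must equal $\cJ_{VC^\infty(D)}$, while one that is not dense must be $\cJ_{\cJ_H}$ with $\cW\cJ_H=(\cH(F))^\perp$ (complement taken in $\cH(E)$) for the HB function $F$ of Theorem \ref{thm:annihilator}(ii). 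Since here $J=\{0\}$ has midpoint $\sigma=0$ and length $2\tau(F)=0$, this $F$ automatically satisfies $\tau(F)=0$ and $\cH(F)\in\text{Chain}(\cH(E))$ (cf. Remark \ref{remark:type}). This produces the two stated normal forms.

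For the total ordering I would compare any two residual subspaces in two steps. (a) For two non-dense subspaces $\cJ_i=\cJ_{(\cH(F_i))^\perp}$, $i=1,2$, the functions $F_1,F_2$ have no real zeros, so Theorem \ref{ordering_thm} makes $\cH(F_1),\cH(F_2)$ comparable; unitarity of $\cW$ reverses this inclusion on orthogonal complements, and since $M\mapsto\cJ_M=\{x:D^jx\in M\ \forall j\}$ visibly preserves inclusion, the corresponding $\cJ_i$ are comparable. (b) Any non-dense $\cJ=\cJ_{(\cH(F))^\perp}$ lies inside $\cJ_{VC^\infty(D)}$. To see this I would use the functional $\varphi_0$ with $\cW\varphi_0\equiv 1$, which by Lemma \ref{volterra_range} (at $\beta=0$) spans $(VC^\infty(D))^\perp$, so that $VC^\infty(D)=\ker\varphi_0$. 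Theorem \ref{thm:annihilator}(ii) shows $\cW\cJ^\perp$ contains the polynomials (the set $\{q e^{i\beta z}:\beta\in J=\{0\}\}$), in particular the constant $1=\cW\varphi_0$; hence $\varphi_0\in\cJ^\perp$, i.e. $\cJ\subseteq\ker\varphi_0=VC^\infty(D)$, and by $D$-invariance of $\cJ$ this gives $D^j\cJ\subseteq VC^\infty(D)$ for all $j$, i.e. $\cJ\subseteq\cJ_{VC^\infty(D)}$. Combining (a) and (b), and recalling that by Theorem \ref{residual_general} these subspaces exhaust all residual subspaces, every pair is comparable, so they form a chain.

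The step I expect to require the most care is (b): the candidate largest subspace $\cJ_{VC^\infty(D)}$ is built from a subspace of $C^\infty(D)$, whereas the non-dense subspaces are built from subspaces of $H$, so a direct monotonicity argument on the generating sets does not bridge the two families. Routing the comparison through the single order-zero functional $\varphi_0$ and the explicit description of $\cW\cJ^\perp$ in Theorem \ref{thm:annihilator}(ii) circumvents this. I would also make sure to exclude the degenerate choice $\cH(F)=\{0\}$ in the non-dense case, since it would force $\cJ_H=H$ and $\cJ=C^\infty(D)$, which is not residual; and to note that the comparability in (a) is insensitive to the possible non-uniqueness of the representation $\cH(F)$ flagged in Proposition \ref{non_unique}, because comparability is a property of the subspaces themselves.
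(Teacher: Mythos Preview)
Your argument is correct and follows essentially the same route as the paper: both reduce the normal forms to Theorem \ref{residual_general} once $\tau(E)=0$ forces $\alpha=0$ and collapses all residual intervals to $\{0\}$, and both obtain the total ordering from the description of annihilators in Theorem \ref{thm:annihilator} together with the de Branges ordering theorem. The paper compresses the ordering step into the single remark that the set $\{\cJ^\perp:\cJ\text{ residual}\}$ is totally ordered, whereas you spell out the two cases (non-dense vs.\ non-dense via Theorem \ref{ordering_thm}, non-dense vs.\ dense via $\varphi_0$) explicitly; this is the same content, and your treatment of the non-uniqueness caveat from Proposition \ref{non_unique} is a welcome clarification.
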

 \begin{proof}
The number $\beta$ in Theorem \ref{thm:annihilator} (i) is zero and the interval $J$ in Theorem \ref{thm:annihilator} (ii) is $\{0\}$. Also, an application of this theorem gives that the set 
$$\{\cJ^\perp:~\cJ \text{ residual}\},$$ is totally ordered by inclusion. The result follows by an application of Theorem \ref{residual_general}. 
 \end{proof} 
A concrete example  is provided by  
 regular Schr\"odinger operators.
\begin{corollary}\label{schrodinger_last}
	Let $D = -d^{2}/dx + q$ be a regular Schrödinger operator on $[a,b]$ with any separated boundary condition at $a$. Let $\mathcal{J}$ be a residual subspace for $D$.  Then there exists $c\in [a,b)$ such that
	\begin{equation*}
		\mathcal{J} = \left\{ f \in C^{\infty}(D) : f([a,c]) = \left\{ 0 \right\} \right\}\text{, when } c > a,
	\end{equation*}
and if $c = a$
	\begin{equation*}
		\mathcal{J} = \left\{ f \in C^{\infty}(D) : D^{k}f(a) = \left(D^{k}f\right)'(a) = 0 \text{, for all } k \in \mathbb{N} \cup \left\{ 0 \right\} \right\}.
	\end{equation*} 
\end{corollary}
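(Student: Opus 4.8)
The plan is to specialize the general theory to this concrete operator in three moves: first pin down the invariants $\alpha$ and $\tau(E)$, then feed the type-zero case into Corollary \ref{tau=0} and Theorem \ref{residual_general}, and finally translate the resulting abstract membership conditions into pointwise vanishing statements, exploiting that elements of $C^\infty(D)$ are genuinely $C^1$ functions.

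First I would record the two facts specific to this setting. By 2) of Examples \ref{ex_model} the model space is an honest de Branges space, so $\mathcal{W}H = \mathcal{H}(E)$ and $\alpha = 0$; and since the eigenfunctions of a regular Schr\"odinger operator have order $1/2$, every element of $\mathcal{H}(E)$ has order at most $1/2$, forcing $\tau(E) = 0$ (this is the computation already carried out in 2) of \S\ref{some_applications}). Thus Corollary \ref{tau=0} applies. If $\mathcal{J}$ is not dense in $H = L^2(a,b)$, Theorem \ref{residual_general} identifies $\mathcal{J} = \mathcal{J}_{\mathcal{J}_H}$, where the $H$-closure $\mathcal{J}_H$ is a proper, nonzero subspace that is $V_\beta$-invariant for $\beta$ in the residual interval; as $\tau(E) = 0$ this interval is $\{0\}$, so $\mathcal{J}_H$ is $V$-invariant. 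Because $V$ is the Green's operator of a regular Schr\"odinger operator, the classification obtained in 2) of \S\ref{some_applications} (through Proposition \ref{interval_chain}) shows that every proper nonzero $V$-invariant subspace is $M_c = \{f \in L^2(a,b) : f|[a,c] = 0 \text{ a.e.}\}$ for a unique $c \in (a,b)$; since a nonzero $\mathcal{J}$ forces $\mathcal{J}_H \ne \{0\}$, the value $c = b$ is excluded, which explains the range $c \in [a,b)$.

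The substance lies in the translation step, done separately in each regime. In the non-dense case $\mathcal{J} = \mathcal{J}_{M_c} = \{f \in C^\infty(D) : D^j f \in M_c \text{ for all } j \ge 0\}$, and I would prove this equals $\{f \in C^\infty(D) : f|[a,c] = 0\}$. One inclusion is immediate from the case $j = 0$: any $f \in \mathcal{J}$ lies in $M_c$, and a continuous function vanishing a.e. on $[a,c]$ vanishes there identically. For the converse, if $f \equiv 0$ on $[a,c]$ then $f' \equiv 0$ and $f'' = 0$ a.e. there, so $Df = -f'' + qf = 0$ a.e. on $(a,c)$ and, being again continuous, $Df \equiv 0$ on $[a,c]$; iterating yields $D^j f \in M_c$ for all $j$.

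For the dense case Corollary \ref{tau=0} gives $\mathcal{J} = \mathcal{J}_{VC^\infty(D)}$, corresponding to $c = a$. The key elementary input is that, by uniqueness for the Cauchy problem \eqref{eq:cauchyproblem} defining $V = V_0$, we have $\text{Ran}(V) = \{u \in \mathscr{D}(D) : u(a) = \dot u(a) = 0\}$. For $f \in C^\infty(D)$, applying $D$ shows that $D^j f \in VC^\infty(D)$ is equivalent to $D^j f = V D^{j+1} f$, hence to $D^j f \in \text{Ran}(V)$, hence to $(D^j f)(a) = (D^j f)'(a) = 0$; ranging over all $j \ge 0$ gives exactly the stated description. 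The main obstacle I anticipate is not conceptual but bookkeeping in these two translations---propagating the a.e.\ differential identities and the vanishing Cauchy data to the closed interval and across repeated applications of $D$---all of which is handled by the $C^1$-regularity of elements of $\mathscr{D}(D)$. The genuinely nontrivial ingredients, namely $\tau(E) = 0$ and the unicellular structure of $V$, are already available from the earlier sections.
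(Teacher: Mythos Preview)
Your proposal is correct and follows precisely the route the paper intends: the corollary is stated immediately after Corollary \ref{tau=0} without proof, as a direct specialization using $\alpha=0$ and $\tau(E)=0$ from Examples \ref{ex_model} 2) and the classification of $V$-invariant subspaces in \S\ref{some_applications} 2). Your translation steps---identifying $\mathcal{J}_{M_c}$ with functions vanishing on $[a,c]$ via continuity, and $\mathcal{J}_{VC^\infty(D)}$ with the vanishing Cauchy data at $a$ via $\mathrm{Ran}(V)=\{u\in\mathscr{D}(D):u(a)=\dot u(a)=0\}$---supply exactly the details the paper leaves to the reader.
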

\end{section}
    
\bibliographystyle{abbrv}
\bibliography{invariant_subspaces_for_generalized_differentiation_and_Volterra_operators}

\begin{thebibliography}{10}

\bibitem{MR0031110}
S.~Agmon.
\newblock Sur un probl\`eme de translations.
\newblock {\em C. R. Acad. Sci. Paris}, 229:540--542, 1949.

\bibitem{MR3318654}
A.~Aleman, A.~Baranov, and Y.~Belov.
\newblock Subspaces of {$C^\infty$} invariant under the differentiation.
\newblock {\em J. Funct. Anal.}, 268(8):2421--2439, 2015.

\bibitem{MR2419491}
A.~Aleman and B.~Korenblum.
\newblock Derivation-invariant subspaces of {$C^\infty$}.
\newblock {\em Comput. Methods Funct. Theory}, 8(1-2):493--512, 2008.

\bibitem{MR3004956}
A.~Aleman, R.~T.~W. Martin, and W.~T. Ross.
\newblock On a theorem of {L}ivsic.
\newblock {\em J. Funct. Anal.}, 264(4):999--1048, 2013.

\bibitem{MR3925104}
A.~Baranov and Y.~Belov.
\newblock Synthesizable differentiation-invariant subspaces.
\newblock {\em Geom. Funct. Anal.}, 29(1):44--71, 2019.

\bibitem{MR3292735}
A.~D. Baranov and D.~V. Yakubovich.
\newblock One-dimensional perturbations of unbounded selfadjoint operators with
  empty spectrum.
\newblock {\em J. Math. Anal. Appl.}, 424(2):1404--1424, 2015.

\bibitem{MR4507623}
Y.~Belov and A.~Borichev.
\newblock On the chain structure in the de {B}ranges spaces.
\newblock {\em J. Funct. Anal.}, 284(2):Paper No. 109757, 23, 2023.

\bibitem{MR4635834}
R.~Bessonov and S.~Denisov.
\newblock {S}zegő condition, scattering, and vibration of {K}rein strings.
\newblock {\em Invent. Math.}, 234(1):291--373, 2023.

\bibitem{MR4307215}
R.~V. Bessonov and S.~A. Denisov.
\newblock De {B}ranges canonical systems with finite logarithmic integral.
\newblock {\em Anal. PDE}, 14(5):1509--1556, 2021.

\bibitem{MR1192782}
M.~S. Birman and M.~Z. Solomjak.
\newblock {\em Spectral theory of selfadjoint operators in {H}ilbert space}.
\newblock Mathematics and its Applications (Soviet Series). D. Reidel
  Publishing Co., Dordrecht, 1987.
\newblock Translated from the 1980 Russian original by S. Khrushch\"ev and V.
  Peller.

\bibitem{MR2739783}
A.~Borichev and M.~Sodin.
\newblock Weighted exponential approximation and non-classical orthogonal
  spectral measures.
\newblock {\em Adv. Math.}, 226(3):2503--2545, 2011.

\bibitem{MR0094717}
M.~S. Brodski\u{\i}.
\newblock On a problem of {I}. {M}. {G}elfand.
\newblock {\em Uspehi Mat. Nauk (N.S.)}, 12(no. 2(74)):129--132, 1957.

\bibitem{MR133456}
L.~de~Branges.
\newblock Some {H}ilbert spaces of entire functions. {II}.
\newblock {\em Trans. Amer. Math. Soc.}, 99:118--152, 1961.

\bibitem{MR0229011}
L.~de~Branges.
\newblock {\em Hilbert spaces of entire functions}.
\newblock Prentice-Hall, Inc., Englewood Cliffs, N.J., 1968.

\bibitem{MR0092124}
W.~F. Donoghue, Jr.
\newblock The lattice of invariant subspaces of a completely continuous
  quasi-nilpotent transformation.
\newblock {\em Pacific J. Math.}, 7:1031--1035, 1957.

\bibitem{gelfand}
I.~M. Gelfand.
\newblock Several problems on the theory of functions of real variable.
\newblock {\em Uspekhi Mat. Nauk}, 5:232--234, 1938.

\bibitem{MR0264447}
I.~C. Gohberg and M.~G. Kre\u{\i}n.
\newblock {\em Theory and applications of {V}olterra operators in {H}ilbert
  space}, volume Vol. 24.
\newblock American Mathematical Society, Providence, R.I., 1970.
\newblock Translated from the Russian by A. Feinstein.

\bibitem{MR1303780}
V.~Havin and B.~J\"oricke.
\newblock {\em The uncertainty principle in harmonic analysis}, volume~28 of
  {\em Ergebnisse der Mathematik und ihrer Grenzgebiete (3) [Results in
  Mathematics and Related Areas (3)]}.
\newblock Springer-Verlag, Berlin, 1994.

\bibitem{MR1388849}
J.~J. Holt and J.~D. Vaaler.
\newblock The {B}eurling-{S}elberg extremal functions for a ball in {E}uclidean
  space.
\newblock {\em Duke Math. J.}, 83(1):202--248, 1996.

\bibitem{MR0091443}
G.~K. Kalisch.
\newblock On similarity, reducing manifolds, and unitary equivalence of certain
  {V}olterra operators.
\newblock {\em Ann. of Math. (2)}, 66:481--494, 1957.

\bibitem{MR0221323}
G.~E. Kisilevsii.
\newblock Invariant subspaces of {V}olterra dissipative operators with nuclear
  imaginary components.
\newblock {\em Izv. Akad. Nauk SSSR Ser. Mat.}, pages 3--23, 1968.

\bibitem{MR1400006}
B.~Y. Levin.
\newblock {\em Lectures on entire functions}, volume 150 of {\em Translations
  of Mathematical Monographs}.
\newblock American Mathematical Society, Providence, RI, 1996.
\newblock In collaboration with and with a preface by Yu. Lyubarskii, M. Sodin
  and V. Tkachenko, Translated from the Russian manuscript by Tkachenko.

\bibitem{MR1915824}
Y.~I. Lyubarskii and K.~Seip.
\newblock Weighted {P}aley-{W}iener spaces.
\newblock {\em J. Amer. Math. Soc.}, 15(4):979--1006, 2002.

\bibitem{MR2215727}
N.~Makarov and A.~Poltoratski.
\newblock Meromorphic inner functions, {T}oeplitz kernels and the uncertainty
  principle.
\newblock In {\em Perspectives in analysis}, volume~27 of {\em Math. Phys.
  Stud.}, pages 185--252. Springer, Berlin, 2005.

\bibitem{MR2609247}
N.~Makarov and A.~Poltoratski.
\newblock Beurling-{M}alliavin theory for {T}oeplitz kernels.
\newblock {\em Invent. Math.}, 180(3):443--480, 2010.

\bibitem{MR4033521}
N.~Makarov and A.~Poltoratski.
\newblock Two-spectra theorem with uncertainty.
\newblock {\em J. Spectr. Theory}, 9(4):1249--1285, 2019.

\bibitem{MR4639943}
N.~Makarov and A.~Poltoratski.
\newblock Etudes for the inverse spectral problem.
\newblock {\em J. Lond. Math. Soc. (2)}, 108(3):916--977, 2023.

\bibitem{MR2628803}
M.~Mitkovski and A.~Poltoratski.
\newblock P\'{o}lya sequences, {T}oeplitz kernels and gap theorems.
\newblock {\em Adv. Math.}, 224(3):1057--1070, 2010.

\bibitem{MR3425390}
M.~Mitkovski and A.~Poltoratski.
\newblock On the determinacy problem for measures.
\newblock {\em Invent. Math.}, 202(3):1241--1267, 2015.

\bibitem{MR216050}
M.~A. Naimark.
\newblock {\em Linear differential operators. {P}art {I}: {E}lementary theory
  of linear differential operators}.
\newblock Frederick Ungar Publishing Co., New York, 1967.

\bibitem{MR1923965}
J.~Ortega-Cerd\`a and K.~Seip.
\newblock Fourier frames.
\newblock {\em Ann. of Math. (2)}, 155(3):789--806, 2002.

\bibitem{MR3154940}
M.~S. Osborne.
\newblock {\em Locally convex spaces}, volume 269 of {\em Graduate Texts in
  Mathematics}.
\newblock Springer, Cham, 2014.

\bibitem{MR2910798}
A.~Poltoratski.
\newblock Spectral gaps for sets and measures.
\newblock {\em Acta Math.}, 208(1):151--209, 2012.

\bibitem{MR3561432}
A.~Poltoratski.
\newblock Bernstein's problem on weighted polynomial approximation.
\newblock In {\em Operator-related function theory and time-frequency
  analysis}, volume~9 of {\em Abel Symp.}, pages 147--171. Springer, Cham,
  2015.

\bibitem{MR1943095}
C.~Remling.
\newblock Schr\"{o}dinger operators and de {B}ranges spaces.
\newblock {\em J. Funct. Anal.}, 196(2):323--394, 2002.

\bibitem{MR3890099}
C.~Remling.
\newblock {\em Spectral theory of canonical systems}, volume~70 of {\em De
  Gruyter Studies in Mathematics}.
\newblock De Gruyter, Berlin, 2018.

\bibitem{https://doi.org/10.48550/arxiv.1408.6022}
R.~Romanov.
\newblock Canonical systems and de {B}ranges spaces, 2014.

\bibitem{MR0192355}
D.~Sarason.
\newblock A remark on the {V}olterra operator.
\newblock {\em J. Math. Anal. Appl.}, 12:244--246, 1965.

\bibitem{MR689997}
K.~Schm\"udgen.
\newblock On the {H}eisenberg commutation relation. {I}.
\newblock {\em J. Functional Analysis}, 50(1):8--49, 1983.

\bibitem{MR716970}
K.~Schm\"udgen.
\newblock On the {H}eisenberg commutation relation. {II}.
\newblock {\em Publ. Res. Inst. Math. Sci.}, 19(2):601--671, 1983.

\bibitem{MR3243083}
G.~Teschl.
\newblock {\em Mathematical methods in quantum mechanics}, volume 157 of {\em
  Graduate Studies in Mathematics}.
\newblock American Mathematical Society, Providence, RI, second edition, 2014.
\newblock With applications to Schr\"odinger operators.

\bibitem{MR566954}
J.~Weidmann.
\newblock {\em Linear operators in {H}ilbert spaces}, volume~68 of {\em
  Graduate Texts in Mathematics}.
\newblock Springer-Verlag, New York-Berlin, 1980.
\newblock Translated from the German by Joseph Sz\"ucs.

\end{thebibliography}
\end{document}